\newcommand{\zerodisplayskips}{%
  \setlength{\abovedisplayskip}{4pt}%
  \setlength{\belowdisplayskip}{2pt}%
  \setlength{\abovedisplayshortskip}{0pt}%
  \setlength{\belowdisplayshortskip}{0pt}}
\appto{\normalsize}{\zerodisplayskips}
\appto{\small}{\zerodisplayskips}
\appto{\footnotesize}{\zerodisplayskips}
\theoremstyle{plain}
\newtheorem{prop}{Proposition}[section]
\newtheorem{alg}[prop]{Algorithm}
\newtheorem{thm}[prop]{Theorem}
\newtheorem{coro}[prop]{Corollary}
\newtheorem{lemma}[prop]{Lemma}
\newtheorem{problem}[prop]{Problem}
\theoremstyle{definition}
\newtheorem{defi}[prop]{Definition}
\newtheorem{exam}[prop]{Example}
\theoremstyle{remark}
\newtheorem{remark}[prop]{Remark}
\numberwithin{table}{section}
\DeclareMathOperator{\num}{Num}
\DeclareMathOperator{\den}{Den}
\DeclareMathOperator{\Two}{Two}
\DeclareMathOperator{\bd}{bd}
\DeclareMathOperator{\Bi}{Bv}
\DeclareMathOperator{\Ext}{Ext}
\DeclareMathOperator{\Res}{Res}
\DeclareMathOperator{\End}{End}
\DeclareMathOperator{\Aut}{Aut}
\DeclareMathOperator{\val}{val}
\DeclareMathOperator{\quo}{quo}
\newcommand{\Poincare}{\mathcal H}
\newcommand{\Perm}{P}
\newcommand{\calH}{\mathcal H}
\newcommand{\calD}{\mathcal D}
\newcommand{\calC}{\mathcal C}
\newcommand{\calP}{\mathcal P}
\newcommand{\F}{\mathbb F}
\newcommand{\GL}{{\rm GL}}
\newcommand{\SL}{{\rm SL}}
\newcommand{\PGL}{{\rm PGL}}
\newcommand{\PSL}{{\rm PSL}}
\newcommand{\Sub}{\Delta}
\newcommand{\Kul}{\mathcal K}
\newcommand{\Tree}{\mathfrak T}
\newcommand{\Mod}[1]{\ (\mathrm{mod}\ #1)}
\newcommand{\verbatimfont}[1]{\renewcommand{\verbatim@font}{\ttfamily#1}}
\def\TT{\mathcal T}
\def\ZZ{\mathbb Z}
\def\SS{\mathbb S}
\def\FF{\mathcal F}
\def\<#1>{{\left\langle{#1}\right\rangle}}
\def\Z{{\mathbb Z}}             % integers
\begin{document}

\title{On computing finite index subgroups of $\PSL_2(\Z)$}

\author{Nicol\'as Mayorga Uruburu}
\address[N. Mayorga]{FAMAF-CIEM, Universidad Nacional de
  C\'ordoba. C.P:5000, C\'ordoba, Argentina.}
\email{nmayorgau@unc.edu.ar}

\author{Ariel Pacetti}
\address[A. Pacetti]{Center for Research and Development in Mathematics and Applications (CIDMA),
	Department of Mathematics, University of Aveiro, 3810-193 Aveiro, Portugal}
\email{apacetti@ua.pt}

\author{Leandro Vendramin}
\address[L. Vendramin]{
Department of Mathematics, Vrije Universiteit Brussel, Pleinlaan 2, 1050 Brussel, Belgium}
\email{Leandro.Vendramin@vub.be}

\thanks{N.M.U. was partially supported by a Conicet grant, AP was
  partially supported by the Portuguese Foundation for Science and
  Technology (FCT) within project UIDB/04106/2020 (CIDMA), L.V. was
  supported by project OZR3762 of Vrije Universiteit Brussel}

\keywords{Bi-valent graphs, Subgroups of the modular group}
\subjclass[2010]{11F06, 05C85}

\begin{abstract}
We present a method to compute finite index subgroups of $\PSL_2(\Z)$. Our strategy follows Kulkarni’s ideas, the main contribution being a recursive method to compute bivalent trees as well as their automorphism group. As a concrete application, we compute all subgroups of index up to $20$. We then use this database to produce tables with several arithmetical properties. 
\end{abstract}

\maketitle
%\begin{abstract}
    %In this article we intend to get a classification of conjugancy classes of finite-index subgroups of $\operatorname{PSL}(2,\mathbb{Z})$ through certain type of colored trees called tree diagrams (introduced in \cite{Kulkarni1991}) and Farey Symbols, and based on these, propose algorithmic tools that reduce the number of trees through their automorphism groups (compare with the classification by signature given in \cite{Stromberg2019}). We then implement an algorithm that gives a permutation representation associated with a subgroup of finite index, first introduced by \cite{Millington}, using algorithms inspired by \cite{Kurth2007}. Finally we included an algorithm provided by \cite{HSUalgorithm} to determine whether or not each of these conjugation classes corresponds to a congruence subgroup.
%\end{abstract}

\section{Introduction} 
Let $\PSL_2(\mathbb{Z})$ be the modular group, obtained as the
quotient of the set of all two by two matrices with integral entries
of determinant $1$ modulo the subgroup
$\{\pm\left(\begin{smallmatrix}1 & 0\\ 0 &
    1\end{smallmatrix}\right)\}$. It is a classical problem that of
determining all subgroups of $\PSL_2(\ZZ)$ of a given finite index. In
a remarkable article Newman (\cite{MR466047}) computed the number of
subgroups of index up to a hundred (including the asymptotic behavior
of the counting function). The group $\SL_2(\ZZ)$ acts on the set of
subgroups by conjugation, and it is also a natural problem that of
determining the subgroups (or their number) up to
$\SL_2(\ZZ)$-equivalence. Similarly, the group $\GL_2(\ZZ)$ acts on
$\PSL_2(\ZZ)$ so the same sort of questions for 
$\GL_2(\ZZ)$-equivalence classes can be considered. In \cite{math/0702223} the author
gave a method to compute the number of $\SL_2(\ZZ)$-equivalence
classes, whose sequence corresponds to the integer sequence
\emph{A121350} (see \url{https://oeis.org/A121350}).

From a computational point of view, it is challenging to actually
compute tables of subgroups of $\PSL_2(\ZZ)$ (modulo conjugation). To
our knowledge, it was Kulkarni (in \cite{Kulkarni1991}) the first one
who actually proposed an algorithm to compute them. In the
aforementioned article (in Appendix 1) he even gives a list of
subgroups of index up to $6$ (for these small indexes, there are only
a few of them). Let us give some details on Kulkarni's approach, which
also shows the relevance of the problem.

Let
$\calH^\ast=\{z\in\mathbb{C}\mid\operatorname{Im}z>0\}\cup\{\infty\}\cup\mathbb{Q}$
denote the extended complex upper half plane.  The group $\SL_2(\Z)$
acts on $\calH^\ast$ by M\"oebius transformations. Since the matrix
$\left(\begin{smallmatrix}-1 & 0\\ 0 & -1\end{smallmatrix}\right)$
acts trivially, we get a well defined action of the modular group
$\PSL_2(\ZZ)$. The typical fundamental domain $\mathcal{T}$ for the
action of $\PSL_2(\ZZ)$ on $\calH^\ast$ is the hyperbolic triangle with
vertices $\rho= \exp(2 \pi i /6)$, $\rho^2$ and $\infty$. The
half-plane $\calH$ has an extra transformation $\iota$ (as a real
variety) given by $\iota(z)= -\overline{z}$. This allows to extend the
action of $\PSL_2(\Z)$ on $\calH^\ast$ to an action of $\PGL_2(\Z)$ on
$\calH^\ast$ given by
\[
  \begin{pmatrix}
    a & b\\
    c & d
  \end{pmatrix} \cdot z =
  \begin{cases}
    \frac{az+b}{cz+d} & \text{ if }ad-bc = 1,\\
    \frac{a\overline{z}+b}{c\overline{z}+d} & \text{ if }ad-bc = -1.
\end{cases}
\]
The hyperbolic triangle $\FF$ with vertices $\rho$, $i$ and $\infty$
(see Figure~\ref{tria}) is a fundamental domain for the action of
$\PGL_2(\ZZ)$ on $\calH^\ast$. Clearly $\TT = \FF \cup \iota(\FF)$.
\begin{figure}[ht]
   \centering
   \begin{tikzpicture}
   \filldraw[draw=black,fill=gray!60] plot [domain=0:1] (\x,{sqrt(4-\x^2)}) node[label={-60:$\rho$},inner sep=1.5pt,circle,fill=blue]{} -- plot [smooth,domain=1:0] (\x,4) -- cycle;
   \draw (3,0) to (-1,0)  node[inner sep=0pt]{};
   \draw (0,3) to (0,-1)  node[inner sep=0pt]{};
  \draw (0,2) node[label={left:$i$},inner sep=1.5pt,circle,fill=red]{};
%  \draw (2,0) node[label={below:$1$},inner sep=1.5pt,circle,fill=black]{};
  \draw (0.5,3) node[inner sep=0pt]{$\mathcal{F}$};
  \draw (0.5,1.3) node[label={above:$f$},inner sep=1.5pt]{};
  \end{tikzpicture}
  \caption{Fundamental domain for $\PGL_2(\Z)$.}
  \label{tria}
\end{figure}
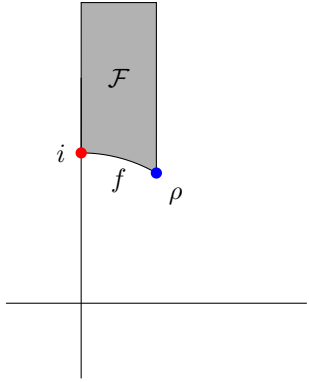
Following \cite{Kulkarni1991} in the present article we use the following terminology.

\begin{defi}
  The points of $\calH$ in the orbit of $i$ (resp. in the orbit of
  $\rho$) by the action of $\GL_2(\Z)$ are called \emph{even or red
    vertices} (resp. \emph{odd or blue vertices}).  The points in the orbit of
  $\SL_2(\Z) \cdot\infty$ are called \emph{cusps}.
\end{defi}
The $\PGL_2(\ZZ)$-translates of the triangle $\FF$ provide a tessellation of
$\calH$.  The hyperbolic geodesic of $\FF$ joining $i$ with $\infty$
(resp. joining $\rho$ with $\infty$) is called an \emph{even edge}
(resp. \emph{odd edge}) and so is all of its translates by
$\GL_2(\Z)$. The hyperbolic geodesic of $\FF$ joining $i$ with $\rho$
is of finite length. Its $\GL_2(\Z)$-translates are called
\emph{$f$-edges}.

Let $\Sub$ be a finite index subgroup of $\PSL_2(\ZZ)$. Then a fundamental
domain for its action is given by the union of translates of $\TT$. A
key idea introduced by Kulkarni in \cite{Kulkarni1991} was to replace
the fundamental domain $\TT$ by
\begin{equation}
  \label{eq:fund-triang}
\calD = \FF \cup \iota(S\cdot \FF),  
\end{equation}
where
$S = \left(\begin{smallmatrix} 0 & -1\\ 1 &
    0\end{smallmatrix}\right)$. In this way, any fundamental domain
obtained as the union of translates of $\calD$ will have no $f$-edges
on its border. Furthermore, if the fundamental domain has nice
properties, the set of $f$-edges inside the fundamental domain form a
graph with an orientation on it. It is expected that the subgroup
$\Sub$ could be recover (up to conjugation) from the graph. This
allowed Kulkarni to give three different ways to describe a finite
index subgroup:
\begin{enumerate}
\item Via the graph of $f$-edges on the quotient $\Sub \backslash \Poincare$, what Kulkarni called a bipartite cuboid graph.
  
\item Via removing from the whole graph of $f$-edges those vertices
  having valence $2$ and making some cuts to the graph in order to get
  a tree, what Kulkarni called a tree diagram.
  
\item Via listing the cusps appearing on a ``special'' fundamental
  domain, and listing how the paths going throw them are glued
  together. This was called a Farey symbol by Kulkarni.
\end{enumerate}

For theoretical purposes, the first approach is the best one, as on
the one hand it contains all the information needed to recover the
subgroup and on the other one it is in bijection with the set
conjugacy classes of subgroups. However, the second one is better for
computational purposes (which is the goal of the present article)
since the tree has a smaller number of edges.

In \cite[\S3.1]{MR0337781} the authors gave an alternative method to
describe a finite-index subgroup of $\PSL_2(\ZZ)$, following the ideas
introduced by Millington in \cite{Millington}. Roughly speaking, if
$\Sub$ is a subgroup of $\PSL_2(\ZZ)$ of index $d$, then $\PSL_2(\ZZ)$
acts by left multiplication on right coset representatives of
$\Sub \backslash\PSL_2(\ZZ)$ (in particular the action of each element of
$\PSL_2(\ZZ)$ is given by a permutation of $\SS_d$). To describe the
action completely it is enough to determine the permutations attached
to a set of generators of $\PSL_2(\ZZ)$, for example the permutation
attached the matrix $S$ introduced before and the one attached to the
matrix
$T=\left(\begin{smallmatrix} 1 & 1\\ 0 & 1\end{smallmatrix}\right)$
(the usual translation matrix). This way of representing a subgroup is
also called a ``passport'' in the literature.  The method was extended
in \cite{Stromberg2019}, where a table containing the permutations
attached to all subgroups (up to $\GL_2(\ZZ)$-equivalence) of index up
to $12$ (and up to index $18$ in electronic format) is
given. Stromberg's computations were extended in different ways
(including computing tables of modular forms) in \cite{2207.13365} and
\cite{2301.02135}.

The purpose of the present article is to provide an algorithm to
compute finite index subgroups (up to $\SL_2(\ZZ)$ and $\GL_2(\ZZ)$
equivalence) via computing tree diagrams. A \emph{bi-valent} tree is a
tree whose set of valences has two elements.  Our main contribution
(of interest on its own) is to provide a recursive algorithm to
construct bi-valent trees. Along the way we prove interesting
properties of the automorphism group of a bi-valent tree (see
Theorem~\ref{thm:auto}) which is crucial to speed up the algorithms
used to computed tree diagrams.

Since our final goal is to provide tables of finite index subgroups of
$\PSL_2(\ZZ)$ (up to both $\SL_2(\ZZ)$ and $\GL_2(\ZZ)$ equivalence)
we also provide the needed algorithms to construct the subgroup
attached to a tree diagram (as well as its passport
representation). Most of these algorithms are already part of the
literature.  Our humble contribution is to include missing details
(that appeared while writing the code) regarding ``boundary issues''
as well as correcting a few mistakes. Of particular interest is the
algorithm to construct the generalized Farey symbol attached to a tree
diagram (see Theorem~\ref{thm:tree-diagram2Farey-symbol}). Although
such an algorithm plays a crucial role in \cite{Kulkarni1991}, somehow
there is no description of such an algorithm in the article, so we take the opportunity to present it here, and prove its correctness.

Our algorithms are implemented in the computer software
\textsf{GAP} \cite{GAP4}. The
code can be downloaded from 
\url{https://github.com/vendramin/subgroups} with DOI \textsf{10.5281/zenodo.8113635}. 
The GitHub repository contains some precomputed data as well as a tutorial
(with examples) on how to use the code. The package depends on the packages:
\textsf{Yags} for graph theory, \cite{MR2735087} for some calculations related
to generalized Farey symbols, and \textsf{HomAlgTools} for speeding up
recursive functions.

The article is organized as follows: Section~\ref{sec:kulkarni}
contains a quick review of Kulkarni's main results used in the
present article. It includes some definitions (used during the
article) as well as the correspondence between subgroups and some
particular fundamental domains (called special polygons). This result
justifies our computation of tree
diagrams. Section~\ref{sec:bi-valent} contains the main recursive
algorithm to compute bi-valent trees as well as their automorphism
group. Although our trees have valence $\{1,3\}$, the proven results hold for
any tree with valence set $\{1,n\}$, $n$ being arbitrary. Of
particular interest is Theorem~\ref{thm:auto}, which describes the
automorphism group of a bi-valent tree as a semi-direct product of two
other ones. The section includes algorithms used to add an orientation
and a coloring on the set of external vertices to a bi-valent tree.

Section~\ref{section:treetosubgroup} contains the definition of a
generalized Farey symbol (g.F.s. for short), which consists of the set
of cusps of a special polygon. As previously mentioned, a given
g.F.s. together with the gluing of the sides allow to give an
alternative description of a finite index subgroup of
$\PSL_2(\Z)$. This section contains an algorithm to compute a
g.F.s. attached to a tree diagram
(Theorem~\ref{thm:tree-diagram2Farey-symbol}), providing a way to go
from the second description to the third one. A tree diagram together
with a g.F.s. is called a \emph{Kulkarni diagram}. The same section
contains different algorithms to, given a Kulkarni diagram
corresponding to a subgroup $\Sub$, compute arithmetic information of
the quotient curve $\Sub\backslash \calH^\ast$ (like cusp
representatives and width of the cusps, genus, ramification points,
etc) as well as information of the subgroup $\Sub$ itself (including a
reduction algorithm needed to solve the word problem, and finding a
set of coset representatives).

Section~\ref{sec:passports} contains the definition of a passport and
algorithms to compute the passport attached to a Kulkarni diagram (to
compare with other tables in the literature). At last,
Section~\ref{sec:data} contains some tables and interesting facts that
can be deduced from the data we computed.

It should be clear to the reader how Kulkarni's article crucially
influenced our work, so we strongly recommend they to look at the
article \cite{Kulkarni1991} which contains more details on the
correspondences between tree diagrams, Farey symbols and bipartite
cuboid graphs.

\section{A quick guide to Kulkarni's approach}
\label{sec:kulkarni}
Recall the following definition from \cite[\S4.1]{Kulkarni1991}.

\begin{defi}
  A \emph{bipartite cuboid graph} is a finite graph whose vertex set
  is divided into two disjoint subsets $V_0$ (the red ones) and $V_1$
  (the blue ones) such that
  \begin{enumerate}
  \item every vertex in $V_0$ has valence $1$ or $2$,
    
  \item every vertex in $V_1$ has valence $1$ or $3$,
    
  \item there is a prescribed cyclic order on the edges incident at each vertex of valence $3$ in $V_1$,
    
  \item every edge joins a vertex in $V_0$ with a vertex in $V_1$.
  \end{enumerate}
\end{defi}

Let $\Sub$ be a finite index subgroup of $\PSL_2(\ZZ)$. The quotient
$\calH^\ast \backslash \Sub$ is a compact oriented real surface, with a
tessellation given by the translates of $\calD$. The graph of
$f$-edges of the surface is an example of a cuboid graph, where $V_0$
corresponds to the red vertices (i.e. the translates of $i$) while
$V_1$ corresponds to the blue vertices (the translates of $\rho$).

We will only use bipartite cuboid graphs twice on the present article:
once in Example~\ref{exm:isom}, to prove that two non-isomorphic tree
diagrams provide conjugated subgroups and also while computing the
generalized Farey symbol attached to a tree diagram. In contrast to a
bipartite cuboid graph, the notion of a tree diagram appears while
working with a fundamental domain (a \emph{special polygon}) for the
quotient. The advantage of this approach is that
we do not need to glue its edges together, it is enough to store
which/how edges get identified. In this way we can work with a tree
instead of a graph.

\begin{defi}
  A \emph{special polygon} is a convex hyperbolic polygon $\calP$ whose boundary $\partial \calP$ is a union of even and odd edges, satisfying the following properties:
  \begin{enumerate}
  \item The even edges in $\partial \calP$ come in pairs, each pair forming
    a complete hyperbolic geodesic called \emph{even line}.
    
  \item The odd edges in $\partial \calP$ come in pairs. The edges in each pair meet at an odd vertex, making and internal angle of $\frac{2\pi}{3}$.
    
  \item There exists an involution on the edges of $\partial \calP$ so that no edge is carried into itself.
    
  \item The involution sends an odd edge into another odd edge, making an internal angle of $\frac{2 \pi}{3}$ between themselves.
    
  \item Let $e_1,e_2$ be two even edges in $\partial \calP$ forming a even line. Then
    either $e_1$ is paired to $e_2$, or else $\{e_1,e_2\}$ form a
      \emph{free side} of $\partial \calP$ and this free side is paired to
      another such free side of $\partial \calP$.
    
  \item $0$ and $\infty$ are two of the vertices of $\calP$.
  \end{enumerate}
\end{defi}

\begin{remark}
  \label{remark:orientation}
  Note that any special polygon has a canonical orientation induced
  from the ``counterclockwise'' orientation of $\calH$. The action of
  $\SL_2(\Z)$ on a special polygon preserves the orientation (since it
  corresponds to a holomorphic function). However, the action of the
  matrix
  $\left(\begin{smallmatrix} -1 & 0\\ 0 & 1\end{smallmatrix}\right)$
  has the effect of reversing the orientation of our special
  polygon. The choice of the opposite orientation corresponds to the
  action by $\iota$ on $\calH$. The choice of the counterclockwise
  orientation or its inverse accounts for the difference between
  equivalence classes of subgroups up to $\SL_2(\Z)$ conjugation or up
  to $\GL_2(\Z)$ conjugation. This will prove crucial later (see
  Remark~\ref{exm:orientation2}).
\end{remark}

\begin{thm}[Kulkarni]
  A special polygon is a fundamental domain for the subgroup $\Sub$
  generated by the side-pairing transformations and these
  transformations form an independent set of generators for
  $\Sub$. Conversely every subgroup $\Sub$ of finite index in $\PSL_2(\Z)$
  admits a special polygon as a fundamental domain.
\end{thm}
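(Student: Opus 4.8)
The plan is to prove both directions of this equivalence using the standard machinery of side-pairing transformations and the Poincaré polygon theorem, adapted to the hyperbolic setting. I would begin with the forward direction. Given a special polygon $\calP$, the defining properties (1)--(5) ensure that its boundary edges are partitioned by an involution into pairs, each pair related by an element of $\PSL_2(\Z)$: even lines are either self-paired (an element of order $2$, fixing the midpoint of the even line which is a translate of $i$) or paired to another free side, and odd edge pairs meeting at a translate of $\rho$ with internal angle $2\pi/3$ are paired to other such pairs. The key point is that these side-pairing transformations satisfy the hypotheses of the Poincaré polygon theorem: the angle condition at odd vertices ($2\pi/3$ internally, and three such triangles fitting around a translate of $\rho$), the fact that even vertices contribute angle $\pi$ (two even edges forming a straight even line), and the cusp vertices $0$, $\infty$ (and their translates on the boundary) contributing parabolic cycles, all combine so that the cycle conditions give trivial cycle relations --- that is, no relations among the generators beyond those forced by elliptic fixed points. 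I would invoke Poincaré's theorem to conclude that $\calP$ tiles $\calH^\ast$ under the group $\Sub$ generated by the side-pairings, that $\calP$ is a fundamental domain, and that the side-pairing transformations (together with the order-two and order-three relations coming from the elliptic cycles, which in $\PSL_2(\Z)$ are already consequences of the presentation) form an independent generating set. Since $\PSL_2(\Z) = \langle S \rangle * \langle ST \rangle \cong \Z/2 * \Z/3$, independence here means the subgroup is free on the images of the non-elliptic generators together with the elliptic generators, matching the expected Euler-characteristic count; this is where one must be careful about what ``independent'' means and relate it to the free-product structure.

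For the converse, I would start with an arbitrary finite-index subgroup $\Sub \leq \PSL_2(\Z)$ and build a special polygon explicitly. Take coset representatives for $\Sub \backslash \PSL_2(\Z)$ and form the union of the corresponding $\PGL_2(\Z)$-translates of the Kulkarni fundamental domain $\calD = \FF \cup \iota(S \cdot \FF)$ from \eqref{eq:fund-triang}; by construction this union has no $f$-edges on its boundary. The work is to choose the coset representatives --- equivalently, to choose which translates of $\calD$ to glue --- so that the resulting region is connected, simply connected, convex in the hyperbolic sense, and has $0$ and $\infty$ among its vertices, so that properties (1)--(6) hold. Connectivity and simple-connectedness can be arranged by a spanning-tree argument on the coset graph (the Schreier graph with respect to $S$ and $T$): one grows the fundamental domain cell by cell, at each stage attaching a new translate of $\calD$ across an edge to keep the region a topological disk. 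Convexity and the precise edge-pairing structure in (1)--(5) then follow from the geometry of how translates of $\FF$ meet along even and odd edges, together with a normalization step (conjugating $\Sub$ if necessary) to ensure $0$ and $\infty$ sit on the boundary.

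The main obstacle, I expect, is the converse direction --- specifically, showing that one can always arrange the union of translates of $\calD$ to be simultaneously \emph{convex} and to satisfy the rather rigid edge-pairing conditions (1)--(5), rather than merely being some connected fundamental domain. A naive spanning-tree choice gives a fundamental polygon but not obviously a convex one with paired free sides; getting convexity typically requires an inductive ``straightening'' procedure or an appeal to the fact that any fundamental domain for a Fuchsian group can be modified to a convex polygon (e.g.\ via a Dirichlet-domain-type argument or Kulkarni's explicit reduction moves). I would handle this by following Kulkarni's original argument: realize the region inside a Farey tessellation picture, where even and odd edges are chords of the Farey fan based at cusps, and show the free sides automatically pair up correctly. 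The forward direction's main subtlety, by contrast, is purely bookkeeping: verifying that every elliptic cycle of the Poincaré polygon theorem closes up with the correct angle sum ($2\pi$ after accounting for the order of the stabilizer) and that the parabolic cycles at the cusps $0$ and $\infty$ and their boundary translates are consistent, which I would organize into a short case analysis on vertex types (red/even, blue/odd, cusp).
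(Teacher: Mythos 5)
There is a genuine gap, and it sits exactly where you predicted it would: the converse direction. Note first that the paper itself does not prove this theorem at all --- its ``proof'' is a citation to page 1055 of Kulkarni's article --- so the substantive question is whether your sketch would stand on its own. Your forward direction is essentially sound as an outline: the side pairings of a special polygon do satisfy the hypotheses of the Poincar\'e polygon theorem (elliptic cycles of angle $2\pi/3$ with order-$3$ pairings at odd vertices, angle $\pi$ with order-$2$ pairings at the fixed points of self-paired even lines, and cusp cycles that are automatically parabolic because an element of $\PSL_2(\ZZ)$ fixing a rational cusp cannot be hyperbolic or elliptic), and since the only cycle relations are the torsion relations of the elliptic pairings, the group generated is the free product of the cyclic groups generated by the side pairings, which is precisely Kulkarni's meaning of ``independent.'' Carrying out the case analysis on vertex types would make this half complete.

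The converse, however, is the real content of the theorem, and your proposal does not establish it. A spanning-tree choice in the coset graph does produce a connected, simply connected fundamental domain that is a union of translates of $\calD$, but nothing in that construction forces conditions (1)--(5): the resulting region need not be convex, the boundary even edges need not assemble into complete even lines, and --- most seriously --- there is no argument that the unpaired even lines (the free sides) can be made to pair off with \emph{other free sides of the same polygon} rather than with edges of some distant translate. Your proposed fix, an ``inductive straightening'' or an appeal to the Farey-tessellation picture, is exactly the part of Kulkarni's argument that does the work (his development of admissible fundamental domains and their reduction to special polygons), and saying you would ``follow Kulkarni's original argument'' at that point is a citation, not a proof --- which puts your write-up in the same position as the paper, but does not close the gap. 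To make this half rigorous you would need either to reproduce Kulkarni's reduction moves in detail, or to give an independent construction (for instance via the tree of $f$-edges in $\Sub\backslash\calH^\ast$ and a careful choice of a lift of a spanning tree) together with an explicit verification of convexity, of the pairing of free sides, and of the normalization placing $0$ and $\infty$ among the vertices.
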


\begin{proof}
	See page 1055 of \cite{Kulkarni1991}.
\end{proof}

A special polygon is the union of translates of $\calD$, so as
explained before, we can look at its tree of $f$-edges. Properties
$(4)$ and $(5)$ of a special polygon imply that the end points of the
$f$-edges tree will be points that are one of:
\begin{itemize}
\item the intersection of two odd edges, or
  
\item the intersection of two even edges paired together, or
  
\item the intersection of two even edges forming a free side.

\end{itemize}

Any vertex of the tree has valence $1$ (corresponding to an end
point), $2$ or $3$. To avoid redundant information on the tree, one
can remove the vertices having valence $2$, obtaining the so called
\emph{tree diagrams}. A great advantage of the tree diagram is that
its number of vertices tends to be much smaller than that of the
original bipartite cuboid graph. Remark~\ref{remark:orientation}
implies that the vertices with valence $3$ have a natural orientation.

\begin{defi}
  \label{defi:tree-diagram}
  A \emph{tree diagram} is a tree with at least one edge such that
  \begin{enumerate}
  \item all internal vertices are of valence $3$,
    
  \item there is a prescribed cyclic order on the edges incident at each internal vertex (orientation),
    
  \item the terminal vertices are partitioned into two possibly empty subsets $R$ and $B$ (red and blue vertices),
    
  \item there is an involution $\iota$ on $R$.
  \end{enumerate}
\end{defi}
By $\Bi(n,3)$ we will denote the set of tree diagrams with $n$ internal vertices.

\begin{exam}
  \label{exm:orientation2}
  It is not true in general that if $\Tree$ is a tree diagram, and we
  consider the tree diagram where the orientation at all internal
  vertices are inverted (by considering the inverse of the
  permutation) we get isomorphic tree diagrams. For example, the
  following planar (with the anti-clockwise orientation) tree diagrams
  in $\Bi(2,3)$ are not isomorphic. They correspond to
  two subgroups of $\PSL_2(\Z)$ which are not isomorphic via
  conjugation under $\SL_2(\Z)$, but are isomorphic under conjugation
  by $\GL_2(\Z)$.
  \begin{figure}[H]
   \centering
   \begin{tikzpicture}
     \draw (0,0) node[inner sep=0pt,draw=blue,circle,minimum size=3pt,fill=blue]{} -- (0.5,0.5) -- (1.5,0.5) -- (2,0) node[inner sep=0pt,draw=red,circle,minimum size=3pt,fill=red]{};
     \draw (0,1) node[inner sep=0pt,draw=red,circle,minimum size=3pt,fill=red]{} -- (0.5,0.5) -- (1.5,0.5) -- (2,1) node[inner sep=0pt,draw=red,circle,minimum size=3pt,fill=red]{};
   \end{tikzpicture}\qquad\qquad   
   \begin{tikzpicture}
     \draw (0,0) node[inner sep=0pt,draw=red,circle,minimum size=3pt,fill=red]{} -- (0.5,0.5) -- (1.5,0.5) -- (2,0) node[inner sep=0pt,draw=red,circle,minimum size=3pt,fill=red]{};
     \draw (0,1) node[inner sep=0pt,draw=blue,circle,minimum size=3pt,fill=blue]{} -- (0.5,0.5) -- (1.5,0.5) -- (2,1) node[inner sep=0pt,draw=red,circle,minimum size=3pt,fill=red]{};
   \end{tikzpicture}
\end{figure}    
\end{exam}

The way to relate a tree diagram with a bipartite cuboid graph as
follows: remove from the bipartite cuboid graphs all vertices of
valence $2$ (connecting the remaining endings) and make some cuts to
the graph so that it becomes a tree (see \cite[\S4.4]{Kulkarni1991} 
for details). Conversely, given a tree diagram, color blue 
all of its internal vertices (i.e. they are odd vertices) and keep the
coloring of the tree diagram on the external vertices. If two
consecutive vertices of the tree are blue, enlarge the tree by adding
one extra vertex and paint it with red color.  Finally, we need to
glue together the identified external vertices: identify two external
vertices $v$ and $w$ in $R$ if $\iota(v) = w$, where $\iota$ is the
involution of $R$. It is not hard to verify that the resulting graph
is a bipartite cuboid graph.

\vspace{1pt}

\begin{remark}
  Following Kulkarni's notation, the way we encode the involution
  $\iota$ is as follows: if $v$ is a vertex fixed by $\iota$ then
  we just save its color. Otherwise, there exists $w \neq v$ such that
  $\iota(v)=w$. In this case, we add a label to each of the two
  vertices.
\end{remark}

As will be described in Section~\ref{section:treetosubgroup}, there
  is a finite-to-one surjective map between the set of tree diagrams
  and the set of conjugacy classes of finite index subgroups of
  $\PSL_2(\ZZ)$. This map unfortunately is not injective.
  
\begin{exam}
\label{exm:isom}
  Consider the following tree diagrams with anti-clockwise orientation:
  \begin{figure}[H]
   \centering
   \begin{tikzpicture}
     \draw (0,0) node[inner sep=0pt,draw=red,circle,minimum size=3pt,fill=red]{} -- (0.5,0.5) -- (1.5,0.5) -- (2,0) node[label={-60:$1$},inner sep=0pt,draw=red,circle,minimum size=3pt,fill=white]{};
     \draw (0,1) node[label={120:$1$},inner sep=0pt,draw=red,circle,minimum size=3pt,fill=white]{} -- (0.5,0.5) -- (1.5,0.5) -- (2,1) node[inner sep=0pt,draw=red,circle,minimum size=3pt,fill=red]{};
   \end{tikzpicture}\qquad\qquad
   \begin{tikzpicture}
     \draw (0,0) node[label={-120:$1$},inner sep=0pt,draw=red,circle,minimum size=3pt,fill=white]{} -- (0.5,0.5) -- (1.5,0.5) -- (2,0) node[inner sep=0pt,draw=red,circle,minimum size=3pt,fill=red]{};
     \draw (0,1) node[inner sep=0pt,draw=red,circle,minimum size=3pt,fill=red]{} -- (0.5,0.5) -- (1.5,0.5) -- (2,1) node[label={60:$1$},inner sep=0pt,draw=red,circle,minimum size=3pt,fill=white]{};
   \end{tikzpicture}
\end{figure}
These two tree diagrams are not isomorphic: the right tree satisfies
that if we move from a red vertex (following the orientation) we end
on a free one, while this is not true for the left one. The bipartite
cuboid graph attached to each one of them are the following:
\begin{figure}[H]
   \centering
   \begin{tikzpicture}
     \draw (0,1) node[label={120:$3$},inner sep=0pt,draw=red,circle,minimum size=3pt,fill=white]{} -- (0.5,0.5) -- (1.5,0.5) -- (2,1) node[label={60:$5$},inner sep=0pt,draw=red,circle,minimum size=3pt,fill=red]{};
     \draw (0,0) node[label={-120:$4$},inner sep=0pt,draw=red,circle,minimum size=3pt,fill=red]{} -- (0.5,0.5) node[label={above:$1$},inner sep=0pt,draw=blue,circle,minimum size=3pt,fill=blue]{} -- (1.5,0.5) node[label={above:$2$},inner sep=0pt,draw=blue,circle,minimum size=3pt,fill=blue]{} -- (2,0) node[label={-60:$3$},inner sep=0pt,draw=red,circle,minimum size=3pt,fill=white]{};
     \draw (1,0.5) node[label={above:$6$},inner sep=0pt,draw=red,circle,minimum size=3pt,fill=red]{};
   \end{tikzpicture}\qquad\qquad
   \begin{tikzpicture}
     \draw (0,1) node[label={120:$3'$},inner sep=0pt,draw=red,circle,minimum size=3pt,fill=red]{} -- (0.5,0.5) -- (1.5,0.5) -- (2,1) node[label={60:$4'$},inner sep=0pt,draw=red,circle,minimum size=3pt,fill=white]{};
     \draw (0,0) node[label={-120:$4'$},inner sep=0pt,draw=red,circle,minimum size=3pt,fill=white]{} -- (0.5,0.5) node[label={above:$1'$},inner sep=0pt,draw=blue,circle,minimum size=3pt,fill=blue]{} -- (1.5,0.5) node[label={above:$2'$},inner sep=0pt,draw=blue,circle,minimum size=3pt,fill=blue]{} -- (2,0) node[label={-60:$5'$},inner sep=0pt,draw=red,circle,minimum size=3pt,fill=red]{};
     \draw (1,0.5) node[label={above:$6'$},inner sep=0pt,draw=red,circle,minimum size=3pt,fill=red]{};
   \end{tikzpicture}    
 \end{figure}
\noindent  where the edges labeled with the same name are glued together.  The
 two new graphs are isomorphic under the map
\[\left(\begin{array}{cccccc}
1 & 2 & 3 & 4 & 5 & 6\\ 
2' & 1' & 6' & 5' & 3' & 4'
        \end{array}\right).
\]
It is easy to verify that this map preserves orientations.
\end{exam}
This is the first example where two non-isomorphic tree diagrams give
conjugated subgroups (corresponding to subgroups of index
$6$). However, it seems that for larger index subgroups (when there
are many free sides) the number of tree diagrams is much larger than
the number of $\SL_2(\ZZ)$-classes of subgroups, for example there are
$28$ tree diagrams corresponding to subgroups of order $9$ while only
$14$ $\SL_2(\ZZ)$-equivalence classes. See the data in Table~\ref{table:sizes}.

\begin{remark}
  Working with bipartite cuboid graphs has the advantage that their
  isomorphism classes are in bijection with $\SL_2(\ZZ)$ conjugacy
  classes of subgroups of $\PSL_2(\ZZ)$, but as already mentioned, has
  the disadvantage that it implies working with much larger graphs (which
  are not trees in general).
\end{remark}

Let $\Tree$ be a tree diagram and $\iota$ its involution. The set of
its external vertices is naturally decomposed as the disjoint union of
three sets
\begin{equation}
  \label{eq:externaldecomp}
V_e(\Tree) = B \cup R_0 \cup R_1,  
\end{equation}
where $B$ are the blue vertices, $R_0$ are the red vertices fixed by
the involution, and $R_1$ are the red vertices not fixed by the
involution (an even set).  Let $b =|B|$ (its cardinality), $r = |R_0|$
and $f = \frac{|R_1|}{2}$. The main goal of the next section is to
provide an algorithm to compute (equivalence classes of) tree diagrams
with parameters $(b,r,f)$.

%As was already noticed by
%Kulkarni, the map that sends a tree diagram to a subgroup equivalence
%class (under conjugation) is not injective. The reason is a little
%tricky. Our tree is not the whole (extended) tree coming from the
%tessellation, we removed all the internal vertices having valence
%$2$. On the one hand, working with smaller trees has a clear
%computational advantage, and on the other one, two extended trees are
%isomorphic if and only if the ``reduced'' ones (obtained by removing
%the valence $2$ vertices) are. Once we glue external vertices
%(especially while gluing free ones) we move from trees to graphs, and
%then it is no longer true that the ``extended'' graphs are isomorphic
%if and only if the ``reduced'' ones are.
%

\section{Bi-valent trees}
\label{sec:bi-valent}
\begin{defi}
  A \emph{bi-valent tree} is a tree satisfying that the valence
  function on vertices takes at most two different values. 
\end{defi}

Clearly the valence function on any tree with at least three vertices
must take at least two different values, hence the valence function on
a bi-valent tree with at least three vertices takes precisely two
different values.
\begin{defi}
Let $T$ be any bi-valent tree.  The set of
\emph{external vertices} (resp. \emph{internal vertices}) that we
denote by $V_e$ (resp. $V_i$) is the set of vertices of $T$ having
valence $1$ (resp. having valence $>1$).
\end{defi}
Denote by $\Bi(m,n)$ the set of bi-valent trees
(up to isomorphism) with valences set $\{1,n\}$ made of $m$ internal vertices.
\begin{exam}
\label{ex:tree}
  There is a unique bi-valent tree $T$ in $\Bi(2,3)$ given by the graph shown in Figure~\ref{fig:ex1}.
   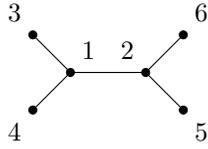
\begin{figure}[H]
   \centering
   \begin{tikzpicture}
     \draw (0,0) node[label={-120:$4$},inner sep=0pt,draw=black,circle,minimum size=3pt,fill=black]{} -- (0.5,0.5) node[label={60:$1$},inner sep=0pt,draw=black,circle,minimum size=3pt,fill=black]{} -- (1.5,0.5) node[label={120:$2$},inner sep=0pt,draw=black,circle,minimum size=3pt,fill=black]{} -- (2,0) node[label={-60:$5$},inner sep=0pt,draw=black,circle,minimum size=3pt,fill=black]{};
     \draw (0,1) node[label={120:$3$},inner sep=0pt,draw=black,circle,minimum size=3pt,fill=black]{} -- (0.5,0.5) -- (1.5,0.5) -- (2,1) node[label={60:$6$},inner sep=0pt,draw=black,circle,minimum size=3pt,fill=black]{};
   \end{tikzpicture}
   \caption{Unique tree in $\Bi(2,3)$.}
\label{fig:ex1}
 \end{figure}
Its adjacency matrix equals
\[\left(
\begin{array}{cc|cccc}
\textcolor{red}{0} & \textcolor{red}{1} & 1 & 1 & 0 & 0\\
\textcolor{red}{1} & \textcolor{red}{0} & 0 & 0 & 1 & 1\\ \hline
1 & 0 & 0 & 0 & 0 & 0\\ 
1 & 0 & 0 & 0 & 0 & 0\\ 
0 & 1 & 0 & 0 & 0 & 0\\
0 & 1 & 0 & 0 & 0 & 0
\end{array}\right),
\]
where the red block is precisely the adjacency matrix of the internal
vertices.
\end{exam}

%Recall the following well known result.
%
%\begin{lemma}[Handshaking lemma] If $G=(V,E)$ is any undirected graph, where $V$ is the set of vertices and $E$ the set of edges, then
%  \begin{equation}
%    \label{eq:handshaking}
%    \sum_{P \in V}\val(P) = 2|E|.    
%  \end{equation}
%\end{lemma}
%
\begin{lemma}
\label{lemma:rel-iner-external}
  Let $T$ be a bi-valent tree, satisfying that the valence of any vertex belongs to the set $\{1,n\}$ for some $n>1$. Then
  \[
|V_e| = (n-2)|V_i|+2.
    \]
\end{lemma}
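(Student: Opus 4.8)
The plan is a short double-counting argument combining the basic tree identity ``number of edges $=$ number of vertices $-1$'' with the handshake lemma, using crucially that every vertex has valence either $1$ or $n$.

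First I would set $m = |V_i|$ and $e = |V_e|$, so that $T$ has $m+e$ vertices altogether. Since $T$ is a tree, hence connected and acyclic, it has exactly $(m+e)-1$ edges. Next I would compute the sum $\sum_{v} \deg(v)$ over all vertices $v$ of $T$ in two ways. On one hand, the handshake lemma gives $\sum_v \deg(v) = 2\bigl((m+e)-1\bigr)$. On the other hand, by the definition of $V_i$ and $V_e$ and the hypothesis that the valence set is contained in $\{1,n\}$ with $n>1$ (so that a vertex of valence $>1$ has valence exactly $n$, and a vertex not of valence $n$ has valence exactly $1$), every internal vertex contributes $n$ and every external vertex contributes $1$, whence $\sum_v \deg(v) = nm + e$.

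Equating the two expressions gives $nm + e = 2m + 2e - 2$, and solving for $e$ yields $e = (n-2)m + 2$, which is the asserted identity. I do not expect a genuine obstacle here; the only points that deserve a word of care are, first, the legitimacy of identifying internal (resp.\ external) vertices with the valence-$n$ (resp.\ valence-$1$) vertices, which rests on the remark recorded just before the lemma that a bi-valent tree with at least three vertices has valence set precisely $\{1,n\}$, and second, the degenerate case $m=0$, i.e.\ the tree consisting of a single edge, for which the formula correctly returns $e = 2$.
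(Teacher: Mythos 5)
Your proof is correct, but it takes a genuinely different route from the paper. The paper argues by induction on $|V_i|$: for $m=1$ the tree is a star with $n$ leaves, and for $m>1$ one cuts the tree along an edge between two internal vertices, adds a new external vertex to each of the two pieces so that they are again bi-valent trees, applies the inductive hypothesis to both, and recombines the counts (using $|V_e(T)| = |V_e(T_1)|+|V_e(T_2)|-2$). Your argument is instead a single global double count: a tree on $|V_i|+|V_e|$ vertices has $|V_i|+|V_e|-1$ edges, the handshake lemma gives $\sum_v \deg(v) = 2(|V_i|+|V_e|-1)$, and the hypothesis that the valence set is contained in $\{1,n\}$ forces every internal vertex to contribute $n$ and every external vertex to contribute $1$, so $n|V_i|+|V_e| = 2|V_i|+2|V_e|-2$, which rearranges to the claim. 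Your version is shorter, avoids the surgery step (where one must check that the two pieces are again bi-valent of the same type), and even covers the degenerate single-edge tree with $|V_i|=0$, which the paper's induction does not address; the paper's inductive cut-and-extend argument, on the other hand, is in the same spirit as the recursive construction of bi-valent trees via $\Ext(T,v)$ used later in the section, so it doubles as a preview of those techniques. Both are complete proofs of the stated identity.
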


\begin{proof}
  By induction on the size $|V_i|$. If $|V_i|=1$ then there must be
  $n$ external vertices (all joined to the unique internal one), hence
  the result. Suppose that $m = |V_i|>1$ and that the result
  holds for all bi-valent trees with at most $m-1$ internal
  vertices. Let $v,w$ be internal vertices joined by an edge (such a
  pair always exists because $m>1$). Cut the tree into two disjoint
  trees $T_1, T_2$, by removing the edge $(v,w)$ and add to each of
  the new trees an external edge joining $v$ (respectively $w$). Now
  each tree is again a bi-valent tree, say with $r$ and $s$ internal
  vertices (so $r+s = m$). The inductive hypothesis implies that
  \[
|V_e(T_1)| = |V_i(T_1)|(n-2) + 2 \qquad \text{ and } \qquad |V_e(T_2)| = |V_i(T_2)|(n-2) + 2.
\]
But $\lvert V_i(T)\rvert = \lvert V_i(T_1)\rvert + \lvert V_i(T_2)\rvert$ and
$|V_e(T)| = |V_e(T_1)| + |V_e(T_2)|-2$ (because we add a new external
vertex to both $T_1$ and $T_2$), hence the result.
\end{proof}

The lemma implies that if $T \in \Bi(m,n)$ then its number of vertices equals
\[
|V(T)| = (n-1)m + 2.
  \]

\subsection{Automorphisms of bi-valent trees}
Let $T$ be a bi-valent tree, and let $T_i = (V_i,E_i)$ be the sub-tree
of internal vertices, namely the tree whose set of vertices
$V_i = V_i(T)$ is the set of internal vertices of $T$, and whose set of
edges $E_i$ is the set of all edges of $T$ between internal
vertices. Given an automorphism $\sigma$ of $\Aut(T)$, it makes sense to
restrict $\sigma$ to the subgraph $T_i$, giving a sequence
\begin{equation}
  \label{eq:restriction}
\begin{tikzcd}   1\longrightarrow\operatorname{Aut}_{e}\longrightarrow\operatorname{Aut}(T)\overset{\operatorname{Res}}{\longrightarrow}\operatorname{Aut}(T_{i}),
\end{tikzcd}
\end{equation}
where $\text{Res}$ is the restriction map (a group morphism) and
$\Aut_e$ is (by definition) its kernel. It turns out that the
sequence~(\ref{eq:restriction}) is exact and furthermore it
splits. Before proving these facts we need a characterization of the
subgroup $\Aut_e$.

Recall that given two vertices of a tree, there exists a unique reduced path
joining them, so there is a natural definition of \emph{distance} between
vertices. For latter purposes, if the bi-valent tree $T$ has at least three
vertices, we denote by
\[
  \Phi \colon V_e \to V_i,
\]
the function which assigns
to an external vertex $v$ the unique internal vertex which is at
distance one from it.
Let $v\in V_e$ be an external vertex, and let $\Two(v)$ be the
set of external vertices at distance at most $2$ from $v$. Since
$v \in \Two(v)$, $\Two(v) \neq \emptyset$. Consider the following function
\[
m\colon V_e \to [1,\ldots,n], \qquad m(v) = |\Two(v)|.
\]
For $1 \le i \le n$, let
\[
V_e^{(i)} :=\{v \in V_e \mid m(v) = i\}.
\]
Then we can decompose the set of external vertices as the disjoint
union of the sets $V_e^{(i)}$.  For $i \in [1,\ldots,n]$, define
  \[
r(i) = \max \{|S|:S\subseteq V_e^{(i)}\text{ and } d(v,w)\neq2
\text{ for all }v,w\in S\}.
\]

\begin{lemma}\label{lemma:kernelautom}
  There is a non-canonical group isomorphism:
  \[
    \Aut_e \simeq \prod_{i=1}^{n} \SS_i^{r(i)},
  \]
  where $\SS_i$ denotes the symmetric group on $i$ elements.
\end{lemma}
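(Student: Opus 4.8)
The plan is to identify $\Aut_e$ concretely as the group of permutations of external vertices that fix every internal vertex, and then decompose this group according to the ``bouquets'' of external vertices hanging off each internal vertex. The key observation is that an automorphism $\sigma \in \Aut_e$ fixes $T_i$ pointwise, hence for each internal vertex $w$ it permutes the fiber $\Phi^{-1}(w)$ of external vertices adjacent to $w$; conversely any collection of such permutations (one per internal vertex) extends to a well-defined automorphism of $T$ fixing $T_i$ pointwise, because external vertices have valence $1$ and impose no further compatibility. So $\Aut_e \simeq \prod_{w \in V_i} \SS_{|\Phi^{-1}(w)|}$.

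\textbf{Reindexing by the invariant $m$.} The next step is to check that the cardinality $|\Phi^{-1}(w)|$ is governed by the function $m$ defined above. If $w$ is an internal vertex and $v$ is an external vertex with $\Phi(v) = w$, then the external vertices at distance at most $2$ from $v$ are exactly $v$ itself together with the other external neighbors of $w$; that is, $\Two(v) = \Phi^{-1}(w)$, so $m(v) = |\Phi^{-1}(w)|$ and this common value depends only on $w$. Thus each internal vertex $w$ carries a well-defined label $i(w) := |\Phi^{-1}(w)| \in \{1,\dots,n\}$, and $\Phi^{-1}(w) \subseteq V_e^{(i(w))}$. First I would record that, for a fixed $i$, the set $\{w \in V_i : i(w) = i\}$ of internal vertices with exactly $i$ external neighbors partitions $V_e^{(i)}$ into that many blocks, each of size $i$; these are precisely the ``fibers'' over internal vertices, and two external vertices lie in the same fiber iff they are at distance exactly $2$ (when $i > 1$) or are equal. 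Hence a maximal subset $S \subseteq V_e^{(i)}$ with no two elements at distance $2$ is exactly a transversal picking one external vertex from each such fiber, so $r(i) = |\{w \in V_i : i(w) = i\}|$ — this is where the somewhat opaque definition of $r(i)$ gets its meaning. (For $i = 1$ one should note the degenerate case: an internal vertex with a unique external neighbor, where ``distance $2$'' imposes nothing, and $r(1)$ still counts such internal vertices correctly.)

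\textbf{Assembling the product.} Combining the two steps, I would group the factors of $\prod_{w \in V_i} \SS_{i(w)}$ by the value of $i(w)$:
\[
\Aut_e \;\simeq\; \prod_{w \in V_i} \SS_{|\Phi^{-1}(w)|} \;=\; \prod_{i=1}^{n} \prod_{\substack{w \in V_i \\ i(w) = i}} \SS_i \;\simeq\; \prod_{i=1}^{n} \SS_i^{r(i)},
\]
which is the claimed isomorphism. The word ``non-canonical'' in the statement is accounted for by the fact that the identification of each fiber $\Phi^{-1}(w)$ with the standard set $\{1,\dots,i\}$ requires an arbitrary ordering, and likewise the grouping of the $r(i)$ fibers of a given size into a power $\SS_i^{r(i)}$ requires choosing an order on those fibers.

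\textbf{Main obstacle.} The conceptual content is modest, so the only real work is the bookkeeping lemma identifying $r(i)$ with the number of internal vertices having exactly $i$ external neighbors — i.e.\ proving that ``same fiber of $\Phi$'' coincides with ``distance exactly $2$ or equal'' on $V_e^{(i)}$, and handling the $i = 1$ edge case cleanly. I expect this to be the step the authors also spend the most care on. One subtlety worth flagging: the claim implicitly uses that $|\Phi^{-1}(w)| \le n$ for every internal vertex, which holds since $\mathrm{val}(w) = n$ and each such vertex has at least one edge into $T_i$ when $|V_i| > 1$ (and when $|V_i| = 1$ the tree is the star with $n$ external vertices, where $\Aut_e = \SS_n$ and $r(n) = 1$, all other $r(i) = 0$, consistent with the formula).
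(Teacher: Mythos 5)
Your proof is correct and follows essentially the same route as the paper: both identify $\Aut_e$ with the permutations of the fibers $\Two(v)=\Phi^{-1}(\Phi(v))$ over the pointwise-fixed internal vertices, decompose each $V_e^{(i)}$ into $r(i)$ such blocks of size $i$, and observe that conversely any choice of block permutations extends to an automorphism fixing $T_i$. The only difference is cosmetic: you spell out why $r(i)$ equals the number of internal vertices with exactly $i$ external neighbours (same fiber $\Leftrightarrow$ distance $2$ or equal), a point the paper passes over with ``by definition''.
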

\begin{proof}
  Let $\sigma \in \Aut(T)$ be an automorphism in the kernel of the
  restriction map and let $v \in V_e^{(i)}$ be an external vertex. Let
  $w = \Phi(v)$ the the internal vertex adjacent to $v$. Since
  $\sigma$ is in the kernel of the restriction map, $\sigma(w) = w$.
  Then $\sigma(v)$ is an external vertex which is also adjacent to
  $w$ (since any automorphism preserves distances). In particular,
  $\sigma(v) \in \Two(v) \subseteq V_e^{(i)}$ and
  $\sigma$ induces a permutation of the elements of $\Two(v)$ (a
  set with $i$ elements). By definition, there exists elements
  $w_1,\ldots,w_{r(i)}$ such that the set $V_e^{(i)}$ can be
  written as the disjoint union
  \[
V_e^{(i)} = \bigsqcup_{j=1}^{r(i)} \Two(w_j),
\]
so the action of $\sigma$ on $V_e^{(i)}$ can be represented as $r(i)$
permutations on $\SS_i$. It is clear that such map is bijective,
i.e. permutations on the (disjoint union of the) sets
$\Two(w_j)$ induce an automorphism of the tree $T$ which fixes
all internal vertices.
\end{proof}

Let $T$ and $T'$ be two bi-valent trees in $\Bi(m,n)$.  For the
purposes of the present article, an \emph{endomorphism} between $T$
and $T'$ is a bijective map between both the set of vertices of $T$ to
the set of vertices of $T'$ and between the set of edges of $T$ to the
set of edges of $T'$ (with the usual compatibility condition). We
denote by $\End(T,T')$ the set of all such maps.

For computational purposes, all trees considered will have its set of edges
labeled by positive integers. This provides a total order on the
graph's set of vertices.
\begin{defi}
  Let $T$ and $T'$ be two trees with a total order on their set of
  vertices. We say that a map $\sigma \in \End(T,T')$ is $2$-ordered
  if it satisfies that for all pair of external vertices
  $v, w \in V_e(T)$ at distance two, it holds that if $v < w$ then
  $\sigma(v) < \sigma(w)$.
\end{defi}

\begin{prop}
  \label{prop:extension}
  Let $T$ and $T'$ be bi-valent trees in $\Bi(m,n)$ with a total order
  on its set of vertices. Let $T_i$ (resp. $T_i'$) be the sub-tree of
  inner vertices of $T$ (resp. $T_i'$ be the sub-tree of inner
  vertices of $T'$). Let $\phi\colon T_i \to T_i'$ be a morphism of
  graphs. Then $\phi$ can be extended in a unique way to a $2$-ordered
  morphism $\psi$ of $\End(T,T')$. Furthermore, if $T_i = T_i'$ and
  $T = T'$, the natural map $\psi\colon\Aut(T_i) \to \Aut(T)$ is a
  group morphism.
\end{prop}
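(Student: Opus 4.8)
The plan is to construct $\psi$ explicitly, vertex by vertex, on the external vertices of $T$ and then to check that this is forced, that it is edge-compatible, and that it is a morphism. First I would note that $\phi$ already tells us what $\psi$ must do on $V_i(T)$: since $\psi$ is supposed to extend $\phi$, we set $\psi|_{V_i(T)} = \phi$. The real content is the behaviour on $V_e(T)$. Fix an external vertex $v \in V_e(T)$ and let $w = \Phi(v)$ be the unique internal vertex adjacent to $v$. Any extension $\psi$ must carry $v$ to an external vertex adjacent to $\phi(w)$, i.e. to an element of the fibre $\Phi'^{-1}(\phi(w)) \subseteq V_e(T')$. Because $\phi$ is a graph isomorphism $T_i \to T_i'$, the vertex $\phi(w)$ has the same valence in $T_i'$ as $w$ has in $T_i$, so $w$ and $\phi(w)$ have the same number of external neighbours, namely $n - \deg_{T_i}(w) = n - \deg_{T_i'}(\phi(w))$; call this number $k$. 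Thus $\psi$ restricts to a bijection between the $k$-element set of external neighbours of $w$ and the $k$-element set of external neighbours of $\phi(w)$, and the $2$-ordered condition pins this bijection down uniquely: it must be the unique order-preserving bijection between these two finite totally ordered sets. (Here one uses that two external vertices adjacent to the same internal vertex are at distance exactly $2$, so the $2$-ordered hypothesis genuinely constrains their relative order; and conversely two external vertices at distance $2$ are always adjacent to a common internal vertex, so every distance-$2$ constraint is of this form.) This defines $\psi$ on all of $V(T)$, and on edges we send the edge $\{v,w\}$ to $\{\psi(v),\psi(w)\} = \{\psi(v),\phi(w)\}$ and each internal edge by $\phi$; one checks this is a bijection on edge sets compatible with incidence, so $\psi \in \End(T,T')$.

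Next I would verify uniqueness: any $2$-ordered extension agrees with $\phi$ on internal vertices by definition of "extension", and the argument above shows its restriction to each external fibre is forced to be the order-preserving bijection, so it coincides with the $\psi$ just built. This gives existence and uniqueness of the $2$-ordered extension.

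For the last sentence, suppose $T_i = T_i'$ and $T = T'$, so we have a map $\psi\colon \Aut(T_i) \to \Aut(T)$, $\phi \mapsto \psi_\phi$, and we must check it is a group homomorphism. The natural way is: $\psi_{\id} = \id$ is immediate since the identity is $2$-ordered and extends the identity. For composition, given $\phi_1, \phi_2 \in \Aut(T_i)$, the composite $\psi_{\phi_1} \circ \psi_{\phi_2}$ is an automorphism of $T$ whose restriction to $T_i$ is $\phi_1 \circ \phi_2$; if I can show $\psi_{\phi_1}\circ\psi_{\phi_2}$ is again $2$-ordered, then by the uniqueness just proved it must equal $\psi_{\phi_1 \circ \phi_2}$, giving multiplicativity. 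But $2$-orderedness is not obviously closed under composition — an order-preserving map composed with an order-preserving map is order-preserving, so in fact a composite of $2$-ordered maps restricted to each relevant fibre is order-preserving, which is exactly $2$-orderedness. So this goes through once one observes that $\psi_{\phi_2}$ maps the external fibre over $w$ order-preservingly onto the fibre over $\phi_2(w)$, and $\psi_{\phi_1}$ maps that fibre order-preservingly onto the fibre over $\phi_1\phi_2(w)$, hence the composite is order-preserving on each fibre.

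I expect the main obstacle to be purely bookkeeping rather than conceptual: carefully pinning down that "distance exactly $2$ between external vertices" is equivalent to "sharing a common internal neighbour" (so that the $2$-ordered condition exactly and only controls the within-fibre orderings and imposes no hidden compatibility between different fibres), and then being careful that the labelled total order on vertices of $T$ restricts to a genuine total order on each external fibre so that "the order-preserving bijection" makes sense and is unique. Once that is cleanly stated, existence, uniqueness, and the homomorphism property all fall out of the single observation that composites of order-preserving bijections are order-preserving, together with the uniqueness clause.
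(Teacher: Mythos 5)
Your proposal is correct and follows essentially the same route as the paper: the paper also observes that any extension must map the set of external neighbours of $w$ (its $\Two(v)$) bijectively onto those of $\phi(w)$, that these sets have equal cardinality $n-\val_i(w)$, that the total order forces the unique order-preserving bijection, and that the homomorphism property follows from uniqueness plus the fact that a composite of $2$-ordered maps is $2$-ordered. Your extra bookkeeping remark (distance $2$ between external vertices $\Leftrightarrow$ common internal neighbour) is exactly the implicit point the paper relies on, so nothing is missing.
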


\begin{proof}
  To avoid confusion, we denote by $\val$ the valence function on
  either tree $T$ or $T'$ and by $\val_i$ the valence function on
  their sub-tree of internal vertices.  Let $v \in V_e(T)$ be any
  external vertex and let $w = \Phi(v)$ be the internal vertex
  adjacent to it. Since $T$ is bi-valent and $w \in T_i$, $\val(w)=n$
  and $\val_i(w)<n$ (because $v$ is not in the internal sub-tree). The
  map $\phi$ preserves valences (since it is a morphism from $T_i$ to
  $T_i'$) hence $\val_i(\phi(w))<n$ and so it must be joined to an
  external edge $v'$ of $T'$. The two sets $\Two(v)$ and $\Two(v')$
  have the same cardinality (equal to $n-\val_i(w)$). Clearly any
  extension of $\phi$ must send the elements of $\Two(v)$ to the ones
  in $\Two(v')$, and since both sets are ordered sets, there exists a
  unique bijection between them preserving the order, providing the
  required extension.

  When $T= T'$ and $T_i = T_i'$, a priori the map
  $\psi\colon\Aut(T_i) \to \Aut(T)$ is only a map of sets, but
  since the composition of two $2$-ordered maps is a $2$-ordered map,
  and from the uniqueness of the extension, the map $\psi$ is actually
  a group morphism.
\end{proof}

\begin{thm}
\label{thm:auto}
  The sequence
\begin{equation}
  \label{eq:automorphisms}
  \begin{tikzcd}
    1\longrightarrow\operatorname{Aut}_{e}\longrightarrow\operatorname{Aut}(T)\overset{\Res}{\longrightarrow}\operatorname{Aut}(T_{i})\longrightarrow1,
  \end{tikzcd}
\end{equation}
is exact. Furthermore, the map $\Res$ has a section
$\psi\colon\Aut(T_i) \to \Aut(T)$, hence
$\Aut(T) \simeq\Aut_e \rtimes\Aut(T_i)$.
\end{thm}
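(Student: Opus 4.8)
The plan is to assemble the theorem from the two lemmas and the proposition already established. First I would verify exactness of~\eqref{eq:automorphisms} at each spot. Exactness at $\Aut_e$ and at $\Aut(T)$ is immediate: the map $\Aut_e \to \Aut(T)$ is the inclusion of a kernel, so it is injective, and by the very definition of $\Aut_e$ in~\eqref{eq:restriction} we have $\Aut_e = \ker(\Res)$. Lemma~\ref{lemma:kernelautom} is not logically needed for exactness, only for the structural description of $\Aut_e$; I would cite it only in the final isomorphism statement. The one point that genuinely requires an argument is surjectivity of $\Res$, i.e.\ exactness at $\Aut(T_i)$. Here I would invoke Proposition~\ref{prop:extension} directly: fix a labelling of the edges of $T$ (hence a total order on vertices), take an arbitrary $\phi \in \Aut(T_i)$, and apply the proposition (with $T' = T$, $T_i' = T_i$) to obtain a $2$-ordered $\psi(\phi) \in \Aut(T)$ extending $\phi$. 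Since $\psi(\phi)$ restricts to $\phi$ on the internal sub-tree, $\Res(\psi(\phi)) = \phi$, proving $\Res$ is onto.

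Second, I would observe that the map $\psi \colon \Aut(T_i) \to \Aut(T)$ just produced is not merely a set-theoretic section but a group homomorphism: this is exactly the last sentence of Proposition~\ref{prop:extension}, which follows from the uniqueness of the $2$-ordered extension together with the fact that a composite of $2$-ordered maps is again $2$-ordered (so $\psi(\phi_1)\circ\psi(\phi_2)$ is a $2$-ordered extension of $\phi_1\circ\phi_2$, hence equals $\psi(\phi_1\phi_2)$). Because $\Res \circ \psi = \identity_{\Aut(T_i)}$, the map $\psi$ is a splitting of the short exact sequence. By the standard recognition criterion for semidirect products, a split short exact sequence $1 \to \Aut_e \to \Aut(T) \to \Aut(T_i) \to 1$ with splitting $\psi$ yields $\Aut(T) \simeq \Aut_e \rtimes \Aut(T_i)$, where $\Aut(T_i)$ acts on the normal subgroup $\Aut_e$ by conjugation via $\psi$. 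This completes the argument.

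I do not expect a serious obstacle, since the substantive work has been front-loaded into Proposition~\ref{prop:extension}; the only thing to be careful about is the hypothesis of that proposition, namely that the trees carry a total order on their vertices coming from an edge labelling, so I would make explicit at the start of the proof that we fix such a labelling on $T$ (the resulting $\psi$ depends on this choice, which is why the theorem only claims a non-canonical splitting and a non-canonical isomorphism). A secondary point worth a sentence is the degenerate case $m = 1$: then $T_i$ is a single vertex, $\Aut(T_i)$ is trivial, and the statement reduces to $\Aut(T) = \Aut_e \simeq \SS_n$, consistent with Lemma~\ref{lemma:kernelautom} since $r(n) = 1$ and all other $r(i) = 0$; but this case is already covered by the general argument and needs no separate treatment.
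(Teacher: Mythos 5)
Your proposal is correct and follows essentially the same route as the paper: surjectivity of $\Res$ and the section $\psi$ come from Proposition~\ref{prop:extension}, exactness elsewhere is by definition of $\Aut_e$, and the semidirect-product conclusion is the standard fact about split short exact sequences. The extra remarks you add (fixing an edge labelling, the non-canonicity of $\psi$, and the degenerate case $m=1$) are accurate but not needed beyond what the paper's argument already covers.
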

\begin{proof}
  The existence of the section $\psi$ follows from
  Proposition~\ref{prop:extension}, providing the surjectivity of the
  map $\Res$. The last statement is a well known fact of split short
  exact sequences of groups.
\end{proof}

Recall that two graphs $G_1$ and $G_2$ are isomorphic if and only if
their adjacency matrices are conjugated by a permutation matrix. The
advantage of working with bi-valent trees is that the sub-tree of inner
vertices determines its isomorphism class uniquely (so instead of
computing with matrices of size $(n-1)|V_i|+2$ times $(n-1)|V_i|+2$,
we can work with matrices of size $|V_i| \times |V_i|$, where
$\{1,n\}$ is the valuation set). In our applications (to construct
subgroups of $\PSL_2(\ZZ)$) $n$ will be $3$, so will roughly speaking
half the size of our matrices.

\begin{coro} 
  Two bi-valent trees $T$ and $T'$ are isomorphic if and only if
  their internal vertices sub-trees $T_i$ and $T_i'$ are isomorphic.
\end{coro}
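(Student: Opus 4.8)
The plan is to read off both implications from the machinery already set up, namely from the restriction map of~(\ref{eq:restriction}) and from Proposition~\ref{prop:extension}; there is no real combinatorial work left to do. Throughout I regard $T$ and $T'$ as members of $\Bi(m,n)$ for one fixed valence set $\{1,n\}$. Note this costs nothing for the statement we want: if $T_i$ and $T_i'$ are isomorphic then in particular $|V_i(T)| = |V(T_i)| = |V(T_i')| = |V_i(T')|$, so the number $m$ of internal vertices is automatically the same for $T$ and $T'$; the only genuine hypothesis being imposed is that the two ``outer valences'' $n$ agree.

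For the forward implication, I would argue as follows. Let $\Theta\colon T\to T'$ be a graph isomorphism. Since $n>1$, the two valence values $1$ and $n$ are distinct, so $\Theta$ preserves the valence of every vertex; hence it maps $V_i(T)=\{v:\val(v)>1\}$ bijectively onto $V_i(T')$ and $V_e(T)$ bijectively onto $V_e(T')$. An edge of $T_i$ is, by definition, an edge of $T$ both of whose endpoints are internal, and this property is preserved by $\Theta$, so $\Theta$ restricts to a bijection from the edge set of $T_i$ onto that of $T_i'$, compatible with incidence. Thus $\Theta|_{T_i}$ is an isomorphism $T_i\to T_i'$; this is exactly the reasoning that makes the map $\Res$ of~(\ref{eq:restriction}) well defined, now applied to two possibly distinct trees.

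For the converse, let $\phi\colon T_i\to T_i'$ be a graph isomorphism. Fix any total order on $V(T)$ and on $V(T')$ (for instance the one induced by a labelling of the edges, as in our computational convention). Since $T,T'\in\Bi(m,n)$, Proposition~\ref{prop:extension} applies and produces a (necessarily $2$-ordered) map $\psi\in\End(T,T')$ extending $\phi$. By the very definition of $\End(T,T')$ adopted in this article, $\psi$ is a bijection on vertices and a bijection on edges satisfying the compatibility condition, i.e.\ a graph isomorphism $T\to T'$. Hence $T\simeq T'$, which completes the proof.

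I do not expect a genuine obstacle: the corollary is essentially a repackaging of Proposition~\ref{prop:extension} together with the trivial fact that an isomorphism of trees respects the internal/external dichotomy once $n>1$. The only point worth a word of caution is the tacit normalization just mentioned — the star with three leaves and the star with four leaves have isomorphic (one-vertex) internal sub-trees yet are not isomorphic — so the statement is to be understood with $n$ fixed, which is the standing assumption here ($n=3$ in the application to subgroups of $\PSL_2(\ZZ)$).
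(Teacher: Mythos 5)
Your proof is correct and takes essentially the same route as the paper: the forward direction is the valence-preservation/restriction argument that makes the map $\Res$ of~(\ref{eq:restriction}) well defined, and the converse invokes Proposition~\ref{prop:extension} to extend an isomorphism of the internal sub-trees to an element of $\End(T,T')$, which is an isomorphism of the full trees. Your closing caveat that $n$ must be held fixed (the one-internal-vertex stars with different numbers of leaves) is a sensible clarification of the hypothesis implicit in working inside $\Bi(m,n)$, not a deviation from the paper's argument.
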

\begin{proof}
  Clearly, if $\phi\colon T \to T'$ is an isomorphism of graphs, its
  restriction to $T_i$ gives an isomorphism between $T_i$ and
  $T_i'$. Conversely, let $\phi$ be an isomorphism between $T_{i}$ and
  $T_{i}'$. By Proposition~\ref{prop:extension}, the map $\phi$ extends
  to a morphism between $T$ and $T'$, hence the statement.
\end{proof}

\subsection{Algorithm to compute bi-valent trees}
Let $m\ge 1$ and $n > 1$ be positive
integers. Lemma~\ref{lemma:rel-iner-external} implies that any element
of $\Bi(m,n)$ has $m(n-2)+2$ external vertices. Trees will be
represented by their adjacency matrix, with the convention that on a
tree with $m(n-2)+2$ vertices, the first $m$ labels are for the
internal vertices and the remaining ones for the external vertices.

%It is easy to see that an element of $T(N)$ has
%exactly $N-2$ internal vertices. If $T\in T(N)$, let
%$v_1,\dots,v_{N-2}$ be its internal vertices, we define the
%\emph{adjacency matrix of $T$} to be $M(i,j)=(a_{ij})$ of size
%$(2N-2)\times(2N-2)$ given by:
%\[a_{ij}=\begin{cases}
%    1,\quad & \text{it $d(v_{i},v_{j})=1$}\\
%    0,\quad & \text{otherwise}
%\end{cases}\]
%and also the \emph{internal vertex adjacency submatrix} of size $(N-2)\times (N-2)$ is defined to be $M_I(i,j)=M(i,j)$ for all $1\leq i,j\leq N-2$.
%
\begin{defi}\label{defi:t-ext}
  Let $T\in \Bi(m,n)$ and let $v\in V_{e}(T)$. The \emph{extension of $T$
    at $v$}, that will be denoted by $\Ext(T,v)$, is the tree obtained
  by adding $n-1$ new vertices $v_1, \ldots, v_{n-1}$ to the set of
  vertices of $T$, and $n-1$ edges to the set of edges of $T$, joining
  $v$ with $v_i$ for each $i=1,\ldots,n-1$.
\end{defi}

It is clear from its definition that if $T \in \Bi(m,n)$ and
$v \in V_e(T)$ then $\Ext(T,v) \in \Bi(m+1,n)$.

%\begin{lemma}\label{extwdef}
%   Let $T\in T(N)$ and $v\in V_{e}$. Then it holds that $\operatorname{Ext}(T,v)\in T(N+1)$.
%\end{lemma}
%\begin{proof}
%   Note that any external vertex of $T$ that is not $v$ is still an external vertex, so these have valence $1$. The vertex $v$ previously had valence $1$, but since it now also connects to two new vertices $v_{1}$ and $v_{2}$, then the valence of $v$ in $\operatorname{Ext}(T,v)$ is $3$. Finally, it is clear by construction that the vertices $v_{1}$ and $v_{2}$ are external, so $\operatorname{Ext}(T,v)\in T(N+1)$.
%\end{proof}
%

\begin{lemma}
  \label{lemma:removal-vertex}
  Let $T \in \Bi(m,n)$ with $m>1$. Then there exists $v \in V_e$ such that $|\Two(v)| = n-1$.
\end{lemma}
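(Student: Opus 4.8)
The plan is to find an internal vertex $w$ of $T_i$ whose removal (together with all edges incident to it in $T_i$) leaves the tree $T_i$ connected except for exactly one "pendant component" — in other words, to locate a leaf of the internal subtree $T_i$. Since $T_i$ is itself a tree with $m>1$ vertices, it has at least two leaves; pick any leaf $w$ of $T_i$. I claim the external vertices adjacent to $w$ in $T$ furnish the desired $v$. Indeed, $w$ has valence $n$ in $T$ (all internal vertices of a bi-valent tree with $\ge 3$ vertices have valence $n$, by the discussion following Lemma~\ref{lemma:rel-iner-external}), while its valence in $T_i$ is $1$ since $w$ is a leaf of $T_i$. Hence $w$ is adjacent in $T$ to exactly one internal vertex and to $n-1$ external vertices $v_1,\dots,v_{n-1}$.

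Next I would compute $|\Two(v_1)|$. The set $\Two(v_1)$ consists of external vertices at distance at most $2$ from $v_1$. Every external vertex at distance $\le 2$ from $v_1$ must be adjacent to $\Phi(v_1) = w$, since $v_1$ is a leaf and its only neighbour is $w$; conversely each of $v_1,\dots,v_{n-1}$ is at distance $\le 2$ from $v_1$ (distance $2$ if $i\neq 1$, distance $0$ if $i=1$), and these are precisely the external vertices adjacent to $w$. Therefore $\Two(v_1) = \{v_1,\dots,v_{n-1}\}$, which has exactly $n-1$ elements. Taking $v = v_1$ finishes the proof.

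The only point that needs care is the existence of a leaf of $T_i$ with the claimed property, i.e. that $T_i$ is a genuine tree with at least two vertices when $m>1$: this is immediate since $T_i$ has $m$ vertices by definition and is connected (it is the induced subgraph on the internal vertices of a tree, and removing leaves from a tree keeps it connected — more precisely, $T_i$ is obtained from $T$ by repeatedly deleting valence-$1$ vertices, which preserves being a tree). A tree with $m \ge 2$ vertices has at least two leaves, so such a $w$ exists. The main (and only mild) obstacle is simply being careful that "leaf of $T_i$" translates correctly into "valence $1$ in $T_i$ but valence $n$ in $T$," which is exactly the bi-valence hypothesis together with $|V(T)| \ge 3$; everything else is a short distance computation.
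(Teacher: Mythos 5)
Your proof is correct, but it follows a genuinely different route from the paper's. The paper argues by counting: using $|\Two(v)|=|\Phi^{-1}(\Phi(v))|$ and the identity $|V_e|=\sum_{v\in V_i}|\Phi^{-1}(v)|$, it notes that if every fibre had at most $n-2$ elements one would get $|V_e|\le (n-2)|V_i| < (n-2)|V_i|+2 = |V_e|$ by Lemma~\ref{lemma:rel-iner-external}, a contradiction; this is a pigeonhole argument and does not exhibit the vertex. You instead argue structurally: the internal subtree $T_i$ is a tree with $m\ge 2$ vertices (connected because the path in $T$ between two internal vertices passes only through vertices of valence $>1$), hence has a leaf $w$, which has valence $n$ in $T$ but valence $1$ in $T_i$, so $w$ has exactly $n-1$ external neighbours, and any one of them $v$ satisfies $\Two(v)=\Phi^{-1}(w)$, of size $n-1$. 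Your version is constructive — it locates the vertex at which the pruning in the algorithm of Theorem~\ref{algbif} can be performed, rather than merely asserting its existence — and it does not rely on Lemma~\ref{lemma:rel-iner-external}; the paper's version is shorter given that that lemma is already available. One small wording caveat: saying $T_i$ is obtained by ``repeatedly deleting valence-$1$ vertices'' is slightly misleading (iterating leaf deletion could remove more than the external vertices of $T$); what you actually use, and what is true, is that deleting all the original leaves of $T$ at once leaves a connected induced subgraph, which is exactly your path argument.
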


\begin{proof}
  Recall the definition of the map $\Phi \colon V_e \to V_i$, which assigns
  to an external vertex $v$ its adjacent internal vertex. If $v \in V_e$, then $|\Two(v)| = |\Phi^{-1}(\Phi(v))|$. Clearly,
  \[
|V_e| = \sum_{v \in V_i}|\Phi^{-1}(v)|.
    \]
    If all sets $\Two(v)$ have cardinality smaller than $n-1$, then
    $|\Phi^{-1}(v)| \le n-2$, so
    \[
      |V_e| \le (n-2)|V_i| < (n-2)|V_i|+2 = |V_e|,
    \]
    a contradiction.
\end{proof}

\begin{thm}\label{algbif}
  The following algorithm gives a set of representatives of
  isomorphism classes of elements in $\Bi(m,n)$ for $m\ge 1$, $n>1$: 
   \begin{algorithmic}[1]
     \IF {$m = 1$} 
     \RETURN $\left(\begin{smallmatrix}0 & 1 & \cdots & 1 \\ 1 & 0 & \cdots & 0\\ \vphantom{\int\limits^x}\smash{\vdots} & \vphantom{\int\limits^x}\smash{\vdots} & \vphantom{\int\limits^x}\smash{\ddots} & \vphantom{\int\limits^x}\smash{\vdots} \\ 1 & 0 & \cdots & 0\end{smallmatrix}\right)$ $($an $(n+1)\times (n+1)$ matrix$)$
     \ENDIF
     \STATE $S \leftarrow \emptyset$
     \FOR{$T \in \Bi(m-1,n)$}
    %  \STATE Let $V_{e}$ be the set of external vertices of $G$.
     \FOR{$v\in V_{e}(T)$}
     \STATE $S\gets S \cup \operatorname{Ext}(T,v)$
     \ENDFOR
     \ENDFOR
     \STATE Remove isomorphic elements from $S$.
     \RETURN $S$
   \end{algorithmic}
\end{thm}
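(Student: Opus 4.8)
The plan is to prove two things: (1) the algorithm terminates and every element it outputs lies in $\Bi(m,n)$, and (2) every isomorphism class in $\Bi(m,n)$ is hit. Both are by induction on $m$. The base case $m=1$ is immediate: by Lemma~\ref{lemma:rel-iner-external} a tree in $\Bi(1,n)$ has exactly $n$ external vertices, all adjacent to the unique internal vertex, and the displayed matrix is precisely the adjacency matrix of that star; it is the unique such tree up to isomorphism. So assume $m>1$ and that the set returned by the algorithm on input $m-1$ is a complete set of representatives of $\Bi(m-1,n)$.

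For correctness of the output (part (1)), I would invoke the remark following Definition~\ref{defi:t-ext}: for each $T \in \Bi(m-1,n)$ and each $v \in V_e(T)$, the tree $\Ext(T,v)$ lies in $\Bi(m+1-1,n) = \Bi(m,n)$. Hence every element placed into $S$ is a genuine element of $\Bi(m,n)$, and after removing isomorphic duplicates the output is a set of pairwise non-isomorphic elements of $\Bi(m,n)$. Termination is clear since $\Bi(m-1,n)$ is finite (by the inductive hypothesis it is exactly the finite set returned at the previous stage) and each $V_e(T)$ is finite.

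The substantive step is surjectivity (part (2)): every isomorphism class in $\Bi(m,n)$ is represented in $S$. This is exactly where Lemma~\ref{lemma:removal-vertex} enters. Let $T \in \Bi(m,n)$ with $m>1$. By Lemma~\ref{lemma:removal-vertex} there is an external vertex $v \in V_e(T)$ with $|\Two(v)| = n-1$; equivalently, writing $w = \Phi(v)$ for the internal vertex adjacent to $v$, the fiber $\Phi^{-1}(w)$ has exactly $n-1$ elements $v = u_1, u_2, \dots, u_{n-1}$. Since $\val(w) = n$, the vertex $w$ has exactly one further neighbour, which must be an internal vertex $w'$. Now form $T'$ by deleting $u_1, \dots, u_{n-1}$ and the $n-1$ edges joining them to $w$; then $w$ becomes a valence-$1$ vertex of $T'$, so $T'$ is a bi-valent tree with valence set $\{1,n\}$ and $m-1$ internal vertices, i.e. $T' \in \Bi(m-1,n)$, and by construction $\Ext(T', w) \cong T$ (the vertex $w$ of $T'$ is external, and re-attaching $n-1$ leaves to it recovers $T$). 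By the inductive hypothesis, $T'$ is isomorphic to some $T_0$ in the set returned at stage $m-1$; fixing an isomorphism $T' \xrightarrow{\sim} T_0$ and letting $v_0 \in V_e(T_0)$ be the image of $w$, one has $\Ext(T_0, v_0) \cong \Ext(T', w) \cong T$. Since the algorithm iterates over all $T_0 \in \Bi(m-1,n)$ and all external vertices of each, $\Ext(T_0, v_0)$ is placed into $S$, so the isomorphism class of $T$ survives in the output.

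The one point requiring a little care — and the step I expect to be the main (mild) obstacle — is the claim that deleting the $n-1$ leaves $u_1,\dots,u_{n-1}$ yields a tree that is still \emph{bi-valent with the same valence set} $\{1,n\}$: one must check that $w$ does not drop to valence $0$ (it doesn't, since $m>1$ forces $w$ to have an internal neighbour $w'$) and that no other vertex changes valence (it doesn't, since only edges at $w$ are removed). After that, the identity $\Ext(T',w)\cong T$ is a direct unwinding of Definition~\ref{defi:t-ext}, and the compatibility $\Ext(-,-)$ with graph isomorphisms (an isomorphism $T'\cong T_0$ carrying $w$ to $v_0$ extends to an isomorphism $\Ext(T',w)\cong\Ext(T_0,v_0)$ by matching the newly added leaves) is routine.
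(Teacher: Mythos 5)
Your proposal is correct and follows essentially the same route as the paper: the base case is the star on $n+1$ vertices, and for $m>1$ one invokes Lemma~\ref{lemma:removal-vertex} to find $v\in V_e(T)$ with $\lvert\Two(v)\rvert=n-1$, deletes those $n-1$ leaves to land in $\Bi(m-1,n)$, and recovers $T$ as $\Ext(T',w)$ up to isomorphism. The extra checks you flag (that $w$ keeps an internal neighbour so $T'$ stays bi-valent, and that $\Ext$ is compatible with isomorphisms) are exactly the details the paper leaves implicit, so nothing is missing.
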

\begin{proof}
  If $m=1$, the tree has a unique internal vertex, hence $n$
  external ones (joined to the internal vertex). In particular, there
  is only one possible tree whose adjacency matrix is the given
  one.

  Assume now that $m>1$. If $T \in \Bi(m,n)$, by
  Lemma~\ref{lemma:removal-vertex}, there exists a vertex
  $v \in V_e(T)$ such that $\Two(v)$ has exactly $n-1$ elements. Let
  $w = \Phi(v)$ be the internal vertex adjacent to $v$. Let $T'$ be
  the tree obtained from $T$ by removing all vertices of $\Two(v)$, 
  as well as the edges joining $w$ with elements of
  $\Two(v)$. Then $T' \in \Bi(m-1,n)$ and clearly $T$ is isomorphic to
  $\Ext(T',w)$. In particular, $T$ is obtained from extending an
  element of $\Bi(m-1,n)$ by an external vertex, so $T$ is isomorphic
  to an element of $S$.
\end{proof}

Actually, the previous algorithm can be improved since clearly if $v, w$ are
two external vertices of a bi-valent tree $T$ at distance $2$ (or
equivalently $w \in \Two(v)$), then the extension of $T$ by $v$ is
isomorphic to the extension of $T$ by $w$. More generally:

\begin{prop}\label{prop:mejora}
  Let $T \in \Bi(m,n)$, let $v,w \in V_e$ and let $\varphi\in\Aut(T)$
  be such that $\varphi(v)=w$. Then $\varphi$ induces an
  isomorphism
  $\tilde{\varphi}\colon\Ext(T,v)\to\Ext(T,w)$.
\end{prop}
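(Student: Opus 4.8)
The plan is to build the isomorphism $\tilde\varphi\colon\Ext(T,v)\to\Ext(T,w)$ by extending the given automorphism $\varphi$ over the newly added vertices and edges. Recall that $\Ext(T,v)$ is obtained from $T$ by adjoining $n-1$ new external vertices $v_1,\dots,v_{n-1}$ and joining each to $v$, and similarly $\Ext(T,w)$ adjoins new vertices $w_1,\dots,w_{n-1}$ joined to $w$. On the underlying copy of $T$ inside each extension I would simply let $\tilde\varphi$ act as $\varphi$; it remains to decide where the new vertices and edges go. Since $\varphi(v)=w$, the new edge $\{v,v_i\}$ must be sent to an edge incident to $w$, and the only available targets that are not already edges of $T$ are the new edges $\{w,w_j\}$. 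So I would fix any bijection $\{1,\dots,n-1\}\to\{1,\dots,n-1\}$ (for instance the identity) and set $\tilde\varphi(v_i)=w_i$ and $\tilde\varphi(\{v,v_i\})=\{w,w_i\}$.

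Next I would check that this $\tilde\varphi$ is genuinely an endomorphism in the sense used in the paper: it is a bijection on vertices (it is $\varphi$ on $V(T)$, which is a bijection $V(T)\to V(T)$, together with the bijection $v_i\mapsto w_i$ on the complementary finite sets of equal size), a bijection on edges (it is the edge-map of $\varphi$ on $E(T)$ together with $\{v,v_i\}\mapsto\{w,w_i\}$, and the two pieces have disjoint images since the new edges are precisely those incident to a new vertex), and it respects incidence. The incidence compatibility on the old part is inherited from $\varphi$; for a new edge $\{v,v_i\}$ its endpoints are $v$ and $v_i$, which map to $w=\varphi(v)$ and $w_i$, the endpoints of $\{w,w_i\}=\tilde\varphi(\{v,v_i\})$, so compatibility holds there too. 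Finally $\tilde\varphi$ is invertible: its inverse is built the same way from $\varphi^{-1}$ (which sends $w$ to $v$) together with $w_i\mapsto v_i$, so $\tilde\varphi$ is an isomorphism of graphs $\Ext(T,v)\xrightarrow{\sim}\Ext(T,w)$.

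Strictly speaking there is essentially no obstacle here: the statement is a routine functoriality check, and the only thing one must be slightly careful about is the bookkeeping of which edges are ``old'' and which are ``new'' so that the edge-map is well-defined and bijective. One subtlety worth a sentence is that if $\varphi$ happens to already fix some structure near $v$ and $w$ (for example if $v$ and $w$ lie in the same $\Two$-set), the map $\tilde\varphi$ one writes down is still a valid isomorphism regardless of the chosen bijection on the new vertices, since the new vertices are mutually indistinguishable leaves attached to the same vertex; this is exactly the remark preceding the proposition, and the proposition is the general statement of which that remark is the special case $w\in\Two(v)$, $\varphi=\identity$. So the proof is: define $\tilde\varphi$ as above, observe it is a bijection on vertices and on edges preserving incidence, and note that its inverse is constructed identically from $\varphi^{-1}$.
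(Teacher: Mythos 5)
Your proof is correct and follows essentially the same route as the paper: the paper also extends $\varphi$ by the identity on the old copy of $T$ together with $v_i\mapsto w_i$ on the new leaves, and leaves the verification that this is an isomorphism as a routine check, which you have simply spelled out in more detail.
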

\begin{proof}
  Let $H=\Ext(T,v)$ and $G = \Ext(T,w)$. Recall that $H$ is obtained
  by adding $n-1$ vertices to $T$, say $v_1,\ldots,v_{n-1}$, and $G$ is
  obtained by adding $n-1$ vertices to $T$, say $w_1,\ldots,w_{n-1}$. Let
  $\tilde{\varphi}\colon H \to G$ be the map given by
  \[
    \tilde{\varphi}(v)=\begin{cases}\varphi(v), & \text{if }v\in T,\\
      w_{i}, & \text{if }v=v_{i}.\end{cases}
\]
  It can easily be verified that $\tilde{\varphi}$ is a well-defined isomorphism between the two trees.
\end{proof}

\begin{remark}
In the particular case when $w \in \Two(v)$, the previous map
corresponds precisely to the \emph{flip} isomorphism in $\Aut_e$ (the
kernel of the restriction map) sending $v \leftrightarrow w$ and
fixing all other elements.
\end{remark}

Define an equivalence relation on $V_e$ by determining that $v \sim w$
if there exists $\sigma \in \Aut_e$ such that $\sigma(v) = w$. The
notation $[V_e/{\sim}]$ will be used to denote any set of
representatives for the equivalent classes. Then we have the following improvement of Theorem~\ref{algbif}.

\begin{thm}
  \label{thm:representatives-trees}
  The following algorithm gives the elements in $\Bi(m,n)$ for
  $m\ge 1$, $n>1$, up to isomorphism:
   \begin{algorithmic}[1]
     \IF {$m = 1$} 
     \RETURN $\left(\begin{smallmatrix}0 & 1 & \cdots & 1 \\ 1 & 0 & \cdots & 0\\ \vphantom{\int\limits^x}\smash{\vdots} & \vphantom{\int\limits^x}\smash{\vdots} & \vphantom{\int\limits^x}\smash{\ddots} & \vphantom{\int\limits^x}\smash{\vdots} \\ 1 & 0 & \cdots & 0\end{smallmatrix}\right)$ (an $(n+1)\times (n+1)$ matrix)
     \ENDIF
     \STATE $S \leftarrow \emptyset$
     \FOR{$T \in \Bi(m-1,n)$}
    %  \STATE Let $V_{e}$ be the set of external vertices of $G$.
     \FOR{$v\in [V_{e}(T)/{\sim}]$}
     \STATE $S\gets S \cup \operatorname{Ext}(T,v)$
     \ENDFOR
     \ENDFOR
     \STATE Remove isomorphic elements from $S$.
     \RETURN $S$
   \end{algorithmic}
   The last step of the algorithm is done by checking whether
   two elements of $S$ have isomorphic sub-trees of inner vertices or not.
\end{thm}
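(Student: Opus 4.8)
The plan is to show that the proposed algorithm is a correct refinement of the algorithm in Theorem~\ref{algbif}, so that it still enumerates every isomorphism class in $\Bi(m,n)$ exactly once (after the final de-duplication step). We argue by induction on $m$. The base case $m=1$ is identical to Theorem~\ref{algbif}. For the inductive step, assume $\Bi(m-1,n)$ is correctly enumerated up to isomorphism by the recursive call. By Theorem~\ref{algbif} we know that every $T \in \Bi(m,n)$ is isomorphic to $\Ext(T',v)$ for some $T' \in \Bi(m-1,n)$ and some $v \in V_e(T')$; since the recursion returns a full set of representatives of $\Bi(m-1,n)$, we may assume $T'$ is one of the returned trees. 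It therefore suffices to show that restricting $v$ to run over $[V_e(T')/{\sim}]$, rather than over all of $V_e(T')$, does not lose any isomorphism class.

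The key observation is Proposition~\ref{prop:mejora}: if $v,w \in V_e(T')$ and there is $\varphi \in \Aut(T')$ with $\varphi(v)=w$, then $\Ext(T',v) \cong \Ext(T',w)$. In particular, if $v \sim w$ in the sense defined before the theorem (i.e.\ $\varphi$ can be taken in $\Aut_e$), then the two extensions are isomorphic, so running $v$ only over a transversal $[V_e(T')/{\sim}]$ produces the same set of isomorphism classes as running over all of $V_e(T')$. Combined with the previous paragraph, this shows that $S$, after the recursion and the two nested loops, contains a representative of every isomorphism class in $\Bi(m,n)$; the final step ``Remove isomorphic elements from $S$'' then yields exactly a set of representatives. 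This proves correctness of the enumeration.

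It remains to justify the last sentence of the statement, namely that the de-duplication in $S$ can be carried out by comparing only the sub-trees of internal vertices. This is immediate from the Corollary following Theorem~\ref{thm:auto}: two bi-valent trees in $\Bi(m,n)$ are isomorphic if and only if their internal sub-trees are isomorphic. Hence testing isomorphism of elements of $S$ reduces to testing isomorphism of $m \times m$ adjacency matrices (those of the internal sub-trees), which is what the algorithm does.

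I expect the only real subtlety to be bookkeeping about what ``$\sim$'' quotients out: the relation is defined via $\Aut_e$, not via all of $\Aut(T')$, so one should note that passing to the coarser quotient by all of $\Aut(T')$ would also be valid (by the full strength of Proposition~\ref{prop:mejora}) but is not needed for correctness — the $\Aut_e$-transversal already suffices, and any remaining coincidences are cleaned up by the final isomorphism filter. Everything else is a direct combination of Theorem~\ref{algbif}, Proposition~\ref{prop:mejora}, and the Corollary to Theorem~\ref{thm:auto}, so there is no genuine obstacle; the proof is essentially an assembly of these three ingredients.
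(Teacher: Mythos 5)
Your proof is correct and matches the paper's intent: the paper gives no separate proof for this theorem, treating it (as you do) as the direct combination of Theorem~\ref{algbif}, Proposition~\ref{prop:mejora} applied to the $\Aut_e$-classes, and the Corollary to Theorem~\ref{thm:auto} for the final isomorphism check on internal sub-trees. The only point you gloss over — that an isomorphism $T'\cong T''$ between a tree and its chosen representative carries $\Ext(T',v)$ to $\Ext(T'',\cdot)$ — is the same harmless identification already made in the paper's proof of Theorem~\ref{algbif}, so nothing is missing.
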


%Let $[V_e]$ be a set of representatives
%
%In particular there is a distinguished automorphism known as \emph{flip}, which swaps two external vertices at distance two and does not change the rest of the graph:
%\begin{coro}
%   Let $G$ be a graph with $N$ external vertices. If $e_1,e_{2}\in V_{e}$ such that $d(e_1,e_{2})=2$, then $\operatorname{Ext}(G,e_{1})\simeq\operatorname{Ext}(G,e_{2})$.
%\end{coro}
%
%
%\begin{thm}\label{thm:iso}
%  Let $T_1,T_2\in T(N)$ with $N\geq4$. The following algorithm decides whether or not $T_1$ and $T_2$ are isomorphic. We denote by $V_{i_k}$ the set of internal vertices of $T_k$, $k=1,2$.
%  
%\begin{algorithmic}[1]
%\STATE Let $M_k$ denote the internal vertex adjacency submatrix of $T_k$ for $k=1,2$.
%\STATE Let $V_k$ be the set of vertices of $V_{i_1}$ of valence $k$, for $k=1,2,3$.
%\STATE Let $W_k$ be the set of vertices of $V_{i_2}$ of valence $k$, for $k=1,2,3$.
%\IF{$\# V_k \neq \# W_k$ for some $1 \le k \le 3$}
%\RETURN \FALSE
%\ENDIF
%\FOR{$\phi \in \Isom(V_1,W_1)$}
%\FOR{$\psi \in \Isom(V_2,W_2)$}
%\FOR {$\sigma \in \Isom(V_3,W_3)$}
%\STATE Let $M_{\phi,\psi,\sigma}$ be the permutation matrix attached to the bijection sending $V_{i_1} \to V_{i_2}$ determined by $(\phi,\psi,\sigma)$.
%\IF {$M_{\phi,\psi,\sigma}^t\cdot  M_1 \cdot M_{\phi,\psi,\sigma} = M_2$}
%\RETURN \TRUE
%\ENDIF
%\ENDFOR
%\ENDFOR
%\ENDFOR
%\RETURN \FALSE
%\end{algorithmic}
%\end{thm}
%\begin{proof}
%  Any isomorphism must preserve the valence of the internal vertices,
%  which justifies the first check as well as the for loops.
%\end{proof}
%
\subsection{Orientations on bi-valent trees}
Recall that the set of vertices on a bi-valent tree is the union of
the external vertices $V_e$ (those with valence one) with the internal
ones $V_i$ (those having valence greater than one). 

\begin{defi}
  An \emph{orientation} on a bi-valent tree $T \in \Bi(m,n)$ is an
  orientation on its set of internal vertices, i.e. give for each
  vertex $v \in V_i$ an ordering of the set (of size $n$) consisting of
  vertices adjacent to $v$.
\end{defi}

The way we represent an orientation algorithmically is by adding to
each internal vertex $v \in V_i$ an $n$-cycle permutation $\sigma_v$.
Thus a bi-valent tree with an orientation is a pair consisting of the adjacency
matrix together with a list of $n$-cycles indexed by the internal vertices.

\begin{exam}
  Let $T$ the unique tree $($up to isomorphisms$)$ in $\Bi(2,3)$ $($from
  Example~\ref{ex:tree}$)$. Since $T$ is already presented as a planar
  graph, an orientation is given by the anti-clockwise choice at each
  vertex $($as mentioned in Remark~\ref{remark:orientation}$)$, namely
    \begin{figure}[H]
   \centering
   \begin{tikzpicture}
     \draw (0,0) node[label={-120:$4$},inner sep=0pt,draw=black,circle,minimum size=3pt,fill=black]{} -- (0.5,0.5) node[label={60:$1$},inner sep=0pt,draw=black,circle,minimum size=3pt,fill=black]{} -- (1.5,0.5) node[label={120:$2$},inner sep=0pt,draw=black,circle,minimum size=3pt,fill=black]{} -- (2,0) node[label={-60:$5$},inner sep=0pt,draw=black,circle,minimum size=3pt,fill=black]{};
     \draw (0,1) node[label={120:$3$},inner sep=0pt,draw=black,circle,minimum size=3pt,fill=black]{} -- (0.5,0.5) -- (1.5,0.5) -- (2,1) node[label={60:$6$},inner sep=0pt,draw=black,circle,minimum size=3pt,fill=black]{};
     \draw[dash pattern=on 3pt off 4pt,draw=red,decoration={markings, mark=at position 0.8 with {\arrow{latex}[length=2mm]}},postaction={decorate}] (0.5,0.5) circle (0.3);
     \draw[dash pattern=on 3pt off 4pt,draw=red,decoration={markings, mark=at position 0.7 with {\arrow{latex[length=2mm]}}},postaction={decorate}] (1.5,0.5) circle (0.3);
   \end{tikzpicture}
   \caption{Unique oriented tree in $\Bi(2,3)$ except isomorphism.}
   \label{OTree4}
\end{figure}
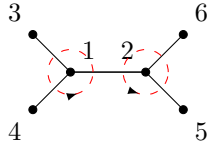
We represent this orientation of $T$ by the $3$-cycles
$\sigma_1=(2,3,4)$ and $\sigma_2=(1,5,6)$.
\end{exam}

\begin{defi}
  Let $T \in \Bi(m,n)$ be a bi-valent tree with an orientation and let $p=\{v_1,\ldots,v_n\}$ be a
  path between two external vertices. We say that the path
  $p$ is \emph{well oriented} if for each internal
  vertex $v_i$ in the path $p$ it holds that $\sigma_{v_i}(v_{i-1})=v_{i+1}$ (i.e. the
  orientation on the vertex $v$ sends the vertex of the path prior to
  $v$ to the subsequent one).
\end{defi}

\begin{remark}
  If $n=3$ (i.e. all vertices have valence $1$ or $3$, which is the
  case we are really interested in), a path is well oriented on a
  vertex $v_i$ if and only if the $3$-cycle $\sigma_{v_i}$ equals
  $(v_{i-1},v_{i+1},w)$, where $w$ is the third vertex adjacent to
  $v_i$.
\end{remark}

\begin{lemma}
  Let $T \in \Bi(m,n)$ be a bi-valent tree with an orientation. Let $v \in V_e$
  be an external vertex. Then there exists a unique $w \in V_e$ and a
  unique reduced and well oriented path $p$ between $v$ and $w$.
\end{lemma}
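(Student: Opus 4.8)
The plan is to produce the path $p$ by a deterministic greedy walk that follows the orientation, and then to verify separately that this walk halts (necessarily at an external vertex) and that it is the only reduced well-oriented path issuing from $v$. To build the candidate, note that since $v$ is external it has a single neighbour, the internal vertex $v_1 := \Phi(v)$; set $v_0 := v$. Given $v_0,\dots,v_i$ with $v_i$ internal, let $v_{i+1} := \sigma_{v_i}(v_{i-1})$, the image of the vertex just visited under the $n$-cycle carried by $v_i$, and stop the construction as soon as some $v_i$ with $i\ge 1$ is external, calling that vertex $w$. The resulting walk is well oriented by construction, because $\sigma_{v_j}(v_{j-1})=v_{j+1}$ at every internal $v_j$ on it.

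Next I would show the walk never backtracks immediately: $\sigma_{v_i}$ is an $n$-cycle with $n\ge 2$, hence fixed-point free, so $v_{i+1}=\sigma_{v_i}(v_{i-1})\ne v_{i-1}$. In a tree, a walk with no immediate backtracking visits pairwise distinct vertices — otherwise a first repetition $v_i=v_j$ ($i<j$) with $j-i$ minimal would yield a closed walk $v_i,\dots,v_j$ of length $\ge 3$ (length $1$ being a loop and length $2$ an immediate backtrack) through the distinct vertices $v_i,\dots,v_{j-1}$, whose edges form a cycle, which is impossible. Hence the walk is a simple path, in particular reduced; since $T$ is finite it must terminate, and it can only terminate at an external vertex, since from an internal vertex $\sigma$ always permits continuing. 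This establishes the existence of the pair $(w,p)$.

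For uniqueness I would take an arbitrary reduced well-oriented path $p'=(v=u_0,u_1,\dots,u_\ell=w')$ with $w'\in V_e$ and prove $p'=p$ by induction on the index. First $u_1$ is forced to equal the unique neighbour $\Phi(v)=v_1$ of $v$; then, assuming $u_{i-1}=v_{i-1}$ and $u_i=v_i$ with $v_i$ internal, reducedness gives $u_{i+1}\ne u_{i-1}$, and the well-oriented condition forces $u_{i+1}=\sigma_{u_i}(u_{i-1})=\sigma_{v_i}(v_{i-1})=v_{i+1}$. Since a reduced path cannot pass through an external vertex in its interior (valence $1$ would force an immediate backtrack), $p'$ terminates exactly at the first index $\ge 1$ where it meets an external vertex, which is the same index at which $p$ terminates; hence $p'=p$ and $w'=w$.

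The step I expect to be the only nontrivial one is the implication ``no immediate backtracking $\Rightarrow$ simple path'', since this is where the hypothesis that $T$ is a tree (absence of cycles) genuinely enters and rules out the possibility that the deterministic walk cycles forever; everything else is routine bookkeeping with the definitions of $\Phi$ and of the orientation cycles $\sigma_v$.
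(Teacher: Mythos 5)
Your proposal is correct and follows essentially the same route as the paper: build the walk deterministically via $v_{i+1}=\sigma_{v_i}(v_{i-1})$, use the fixed-point-freeness of the orientation $n$-cycles together with the absence of cycles in a tree to show it cannot repeat a vertex and hence must terminate at an external vertex, with uniqueness forced step by step by the well-oriented condition. Your only deviation is cosmetic: you argue ``no immediate backtracking, hence simple, hence terminates,'' while the paper argues ``a repetition would force a backtrack $v_{t-1}=v_{t+1}$, contradicting fixed-point-freeness,'' and you spell out the uniqueness induction that the paper leaves implicit.
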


\begin{proof}
  Let $v_1$ be the internal vertex adjacent to $v$, and let
  $v_2 = \sigma_{v_1}(v)$, the unique choice so that the path
  $\{v,v_1,v_2\}$ is well oriented. If $v_2$ is an external vertex,
  then the path $\{v,v_1,v_2\}$ is a well oriented path between
  external vertices. Otherwise, let $v_3 = \sigma_{v_2}(v_1)$ (once
  again this is the unique choice so that the path $\{v,v_1,v_2,v_3\}$
  is well oriented) and continue this process. Since the number of
  vertices is finite, either at some point we reach an external
  vertex, or we get a ``period'', i.e. there exist $i<j$ such that
  $v_i = v_j$. Since $T$ is a tree, the only way to get a path
  starting and ending at the same vertex is that there exists an index
  $t$, with $i < t < j$ such that $v_{t-1} = v_{t+1}$, i.e.
  $v_{t+1} = \sigma_{v_t}(v_{t-1}) = v_{t-1}$, which cannot happen
  since the orientation at the vertex $v_t$ is an $n$-cycle (so has no
  fixed points).
\end{proof}

\begin{defi}
  Let $T \in \Bi(m,n)$ be an oriented bi-valent tree and let
  $v \in V_e$ be an external vertex. A vertex $w \in V_e$ is \emph{to
    the right} of $v$ if there exists a reduced and well oriented path $p$ from
  $v$ to $w$. Analogously, a vertex $w \in V_e$ is \emph{to the left}
  of $v$ if there exists a reduced and well oriented path from $w$ to $v$.
\end{defi}

In particular, we can define the function
\begin{equation}
  \label{eq:r-function}
r\colon V_e \to V_e, \qquad r(v) = w \text{ (vertex to the right of }v).
\end{equation}
Respectively, we have a function $l\colon V_e \to V_e$ sending $v$ to
the vertex to the left of $v$. Recall our definition of the map
$\Phi\colon V_e \to V_i$ which assigns to an external vertex the
internal one adjacent to it.

%An \emph{orientation} on a tree $T \in T(N)$ is a cyclical ordering of the vertices connecting each internal vertex. Since all internal vertices have valence $3$, an orientation is given by the $3$-cycles of the adjacent vertices to each internal vertex. Note that once an orientation is given on a tree, it determines a unique way to move from one external vertex to another one, with the condition that if we pass through an internal vertex $v$ coming from a vertex $v_1$ and towards a vertex $v_2$, then the $3$-cycle $(v_1\;v_2\;v_3)$ ($v_3$ being the third vertex joined to $v$ via an edge) matches the orientation at $v$. In this case we denote by $\sigma_{v}$ to $3$-cycle relative to $v$. 
%
%If $T\in T(N)$ is endowed with an orientation, we say that a path $[v_1,\dots,v_n]$ is \emph{well-oriented} if for each internal vertex $v_i$ with $1<i<n$, it holds that $ \sigma_{v_i}(v_{i-1})=v_{i+1}$. And we say that the path is \emph{reduced} if $v_i\neq v_j$ whenever $i\neq j$. 
%
%It can be proved that given an external vertex $v$, there exists a unique well-oriented and reduced path from $v$ to another external vertex $w$, and this is defined to be the \emph{vertex to the right of $v$}. Moreover, every external vertex is to the right of another vertex. Let us denote by $r$ this ``assignment to the right''. Clearly $r$ is bijective (with inverse $l$ ``left assignment''), and furthermore, it defines an $N$-cycle of its external vertices.
%
%provides an array $M$ of the
%  all different vertices of $V_{e}$, such that $M(k)=r^{k}(v)$ for all
%  $0\leq k\leq N-1$.
\begin{alg}
  Let $T\in \Bi(m,n)$ a bi-valent tree with an orientation and $v$ be
  an external vertex. The following algorithm computes the value of
  the function $r$ at an external vertex $v$:
   \begin{algorithmic}[1]
   \STATE $a \leftarrow v$
   \STATE $b\leftarrow\Phi(v)$ 
   \STATE $c\leftarrow\sigma_{b}(a)$
   \WHILE{$c\in V_{i}$}
   \STATE $a\leftarrow b$
   \STATE $b\leftarrow c$
   \STATE $c\leftarrow\sigma_{b}(a)$

   \ENDWHILE
   \RETURN $c$
   \end{algorithmic}
\end{alg}

%   \begin{algorithmic}[1]
%   \STATE $M:=[v]$
%   \STATE $w,w':=0$
%   \FOR{$1\leq k\leq N-1$}
%   \STATE $w\leftarrow\Phi(v)$ 
%   \STATE $v\leftarrow\sigma_{w}(v)$
%   \WHILE{$v\in V_{i}$}
%   \STATE $w'\leftarrow v$
%   \STATE $v\leftarrow\sigma_{v}(w)$
%   \STATE $w\leftarrow w'$
%   \ENDWHILE
%   \STATE $M\leftarrow \operatorname{concat}(M,[v])$
%   \ENDFOR
%   \RETURN $M$
%   \end{algorithmic}
%

\begin{lemma}
  Let $T \in \Bi(m,n)$ be a bi-valent tree with an orientation, and
  let $v \in V_e$ be an external vertex. Then
  \[
    V_e = \{r^i(v) : 0 \le i < |V_e|\}.
    \]
\end{lemma}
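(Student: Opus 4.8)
The plan is to realise the map $r$ as the first return map of a single combinatorial ``rotation'' on the directed edges of $T$, and then to prove by induction on $|V_i|$ that this rotation is a single cycle.

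First I would introduce the rotation. Let $D$ be the set of directed edges of $T$, so that each edge $\{x,y\}$ of $T$ contributes the two ordered pairs $(x,y)$ and $(y,x)$, and define $N\colon D\to D$ by
\[
  N(a,b)=\begin{cases}(b,\sigma_b(a)) & \text{if }b\in V_i,\\ (b,a) & \text{if }b\in V_e.\end{cases}
\]
Since each $\sigma_b$ is a permutation of the neighbours of $b$ and each external vertex has a unique neighbour, inverting the two cases shows that $N$ is a bijection of $D$. Comparing with the algorithm computing $r$, starting from the directed edge $(v,\Phi(v))$ and iterating $N$ reproduces exactly the list of directed edges $(v,v_1),(v_1,v_2),\dots$ traversed along the reduced well oriented path out of $v$; if that path ends at the external vertex $v_k$, then $N(v_{k-1},v_k)=(v_k,v_{k-1})=(v_k,\Phi(v_k))$, so one return later we land on $(r(v),\Phi(r(v)))$. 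Writing $D_0=\{(u,\Phi(u)):u\in V_e\}$ and noting that every intermediate directed edge $(v_i,v_{i+1})$, $1\le i\le k-1$, has internal first coordinate and hence lies outside $D_0$, we conclude that, under the bijection $V_e\to D_0$, $v\mapsto(v,\Phi(v))$, the map $r$ is the first return map of $N$ on $D_0$. Hence it is enough to show that $N$ is a single cycle: the first return map of a single cycle on any nonempty subset is again a single cycle, so $r$ will be an $|V_e|$-cycle, which is precisely the assertion $V_e=\{r^i(v):0\le i<|V_e|\}$.

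Next I would prove that $N$ is a single cycle, by induction on $m=|V_i(T)|$. If $m=1$, then $T$ has a single internal vertex $w$ with external neighbours $x_1,\dots,x_n$ cyclically ordered by $\sigma_w=(x_1\,x_2\,\cdots\,x_n)$, and one checks directly that $N$ runs through $(x_1,w),(w,x_2),(x_2,w),(w,x_3),\dots,(x_n,w),(w,x_1)$, a single cycle of length $2n$. If $m>1$, Lemma~\ref{lemma:removal-vertex} furnishes $v\in V_e$ with $|\Two(v)|=n-1$; then $w=\Phi(v)$ has exactly one internal neighbour $u$ and $n-1$ external neighbours, which I label $y_1,\dots,y_{n-1}$ so that $\sigma_w=(u\,y_1\,y_2\,\cdots\,y_{n-1})$. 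Let $T'\in\Bi(m-1,n)$ be obtained by deleting $y_1,\dots,y_{n-1}$ (so $w$ becomes external in $T'$), carrying the orientation induced on $V_i(T')=V_i(T)\setminus\{w\}$, and let $N'$ be its rotation, a single cycle by the inductive hypothesis. Now $N$ and $N'$ agree on every directed edge except around $w$: the transition $(u,w)\to(w,u)$ of $N'$ is replaced in $N$ by the detour
\[
  (u,w)\to(w,y_1)\to(y_1,w)\to(w,y_2)\to\cdots\to(y_{n-1},w)\to(w,u),
\]
while all remaining transitions coincide, in particular $(w,u)\to(u,\sigma_u(w))$ and the transition into $(u,w)$. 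Since inserting a path into a single cycle produces a single cycle, $N$ is a single cycle, completing the induction and hence the proof.

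The step I expect to require the most care is the bookkeeping in the inductive step: one must verify that turning $w$ from an internal vertex into an external one changes $N$ only in the transitions listed above, and that the inserted detour is exactly the set of $2(n-1)$ new directed edges. (One could instead invoke the classical fact that a ribbon structure on a tree has a single boundary component, whose boundary walk is $N$, but the elementary induction keeps everything self-contained.)
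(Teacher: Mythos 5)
Your proof is correct, and it takes a genuinely different route from the one in the paper. The paper argues directly on the fixed tree: it takes the $r$-orbit of $v$, collects all directed edges of the well oriented paths between consecutive orbit members, uses the fact that the concatenation of these paths is a closed walk in a tree (hence cancels completely, so $\tilde E$ is closed under edge reversal) together with well-orientedness to show that $\tilde E$ is closed under $(a,b)\mapsto(\sigma_b^{-1}(a),b)$, and concludes that the union of the paths exhausts the tree, so every external vertex occurs as an endpoint of some path and hence lies in the orbit. You instead encode $r$ as the first return map, on $D_0=\{(u,\Phi(u)):u\in V_e\}$, of the rotation $N$ on directed edges (the boundary walk of the ribbon structure), reduce the lemma to the statement that $N$ is a single cycle, and prove that by induction on $|V_i|$ using Lemma~\ref{lemma:removal-vertex} — the same pruning step the paper already uses in Theorem~\ref{algbif}. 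Your identification of $r$ with the first return map is carefully justified (intermediate directed edges have internal first coordinate, so they avoid $D_0$), the bookkeeping in the inductive step is exactly the detour you describe, with the $2(n-1)$ new directed edges inserted between $(u,w)$ and $(w,u)$, and the facts you invoke (a first return map of a cyclic permutation is cyclic; splicing a path into a cycle gives a cycle) are standard and correctly applied. What each approach buys: the paper's argument is induction-free and self-contained on the given tree, relying only on uniqueness of reduced paths and cancellation of closed walks; yours is more structural, recycles the existing removal lemma, and gives slightly more — $N$ is a single cycle of length twice the number of edges, so the analogous statement for the left map $l$ (first return of $N^{-1}$) and the cyclic ordering of $V_e$ come for free, which is essentially the single-boundary-component property of a ribbon tree that underlies the Farey-symbol construction later in the paper.
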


\begin{proof}
  Let $R(v)=\{v=v_1,v_2,\ldots,v_N\}$ be the set of external vertices
  to the right of each other (so $r(v_N)=v$). Let $p_i$ be the reduced
  and well oriented path between $v_{i}$ and $v_{i+1}$ for
  $i=1,\ldots,N-1$ and $p_N$ the reduced and well oriented path
  between $v_N$ and $v_1$. Each path is made of edges, so let
  $\tilde{E}$ be the union of all the (directed) edges appearing in
  $p_i$ for any $1 \le i \le N$ and let $\tilde{V}$ be the set of
  vertices of elements of $\tilde{E}$. Suppose we prove that for
  each $v \in \tilde{V}$ which is an internal vertex of $V$ it holds
  that its $n$ adjacent elements are also in $\tilde{V}$, then it
  must be the case that $\tilde{V} = V$. The reason is that given any
  $w \in V$, there is a unique reduced path joining $v$ to $w$. By our
  assumption, all edges of the path are elements of $\tilde{E}$, so
  $w \in \tilde{V}$ as claimed.

  To prove the stated property, note that since $r(v_N) = v$, the
  compositum $p_N \circ \cdots \circ p_1$ is a path between $v_1$ and
  $v_1$, hence is the trivial path, i.e. in this walk there is a
  complete cancellation of edges. In particular, if a directed edge
  lies in $\tilde{E}$, the edge with the opposite direction must also be a member
  of $\tilde{E}$.

  Let $b \in \tilde{V}$ be an internal vertex which is part of a path
  $p_i$, say $\{a,b,c\}$ is part of $p_i$. Since paths are well
  oriented, $\sigma_b(a)=c$. The complete cancellation property
  implies that the (directed) edge $(b,a)$ is part of some path $p_j$
  (for $1 \le j \le N$), i.e. $\{\sigma_b^{-1}(a),b,a\}$ appears in
  the path $p_j$ (because $p_j$ is also well oriented). This proves
  that if $(a,b) \in \tilde{E}$ then $(\sigma_b^{-1}(a),b)$ also
  belong to $\tilde{E}$. Repeating the argument,
  $(\sigma_b^{-i}(a),b) \in \tilde{E}$ for all $1 \le i \le n$, and
  since $\sigma_b$ is an $n$-cycle, all these edges are different
  hence all the vertices adjacent to $b$ are in $\tilde{V}$.
%  The
%  cancellation property once again implies that the (directed) edge
%  $(b,\sigma_b^{-1}(a))$ is part of one of the paths $\{p_i\}$. In
%  particular, the edge $(\sigma_b^{-2}(a),b)$ is part of one
%  path. Since $\sigma_b$ is an $n$-cycle, this implies that all edges
%  having $b$ as an end point are in $E$. Since any tree is connected,
%  $E$ contains all edges of $T$ and hence $R(v)=V_e$.
 % IF some internal $w$ vertex is missing, take the path between an
  % internal verxtex $v$ in $E$ and $w$. Since $v \in E$, the next
  % element of the path is also in $E$, and so on...
\end{proof}
%
%\begin{remark}
%  For computational purposes, we label the external vertices in the
%  following way: choose an element $v \in V_e$, and label it
%  $m+1$. An external vertex $w$ is labelled $m+k$ if $r^{k-1}(v)=w$.
%\end{remark}
%
\begin{remark}
  As mentioned before, we are mainly interested in the case $n=3$. The
  way we compute representatives for elements in $\Bi(m,3)$ with an
  orientation is the following: start with a set of isomorphism
  classes representatives $S$ for $\Bi(m,3)$ as described in
  Theorem~\ref{thm:representatives-trees}. Given an element
  $T \in S$, compute for each internal vertex the two possible
  orientations, and store the resulting bi-valent trees with
  an orientation. This produces a set whose number of elements is
  $2^{|V_i|}$ times the size of $S$.

  One can do better by the following observation: if $v$ is an
  internal vertex adjacent to two external vertices $w_1,w_2$, then
  the two possible orientations at the vertex $v$ are isomorphic (via
  the isomorphism sending $w_1 \leftrightarrow w_2$ and fixing the
  other vertices). For this reason, our code just picks one
  orientation for each internal vertex adjacent to two exterior ones. This
  trivial improvement gives in practice a huge saving.
\end{remark}

\subsection{Coloring external vertices} Recall from
Definition~\ref{defi:tree-diagram} that a tree diagram is an element
$\Tree$ of $\Bi(m,3)$ (for some $m$) with an orientation and a particular
coloring on the set $V_e$ (the external vertices of $\Tree$). Let $b, r, f$
be non-negative integers such that $b+r+2f = m+2 = |V_e|$. A coloring
on $V_e$ with parameters $(b,r,f)$ is equivalent to a decomposition of the set
$V_e$ as a disjoint union of the form
\[
  V_{e}=B\cup R_0\cup_i F_i,
\]
(as in (\ref{eq:externaldecomp})) where
\begin{enumerate}
\item the set $B$ has $b$ vertices; its vertices are the blue ones,
  
\item the set $R_0$ has $r$ vertices; its vertices are the red ones
  fixed by the involution $\iota$, and
  
\item each set $F_i$ has two red vertices, and the involution $\iota$
  sends one to the other. The union of the sets $F_i$ equals $R_1$ in
  (\ref{eq:externaldecomp}). It is a set of size $2f$ corresponding to
  the ``free sides''.
\end{enumerate}

\vspace{2pt}

Given $T \in \Bi(m,3)$ with an orientation, the way to compute all
possible coloring with parameters $(b,r,f)$ is first to compute all
possible subsets $B$ of $V_e$ of size $b$, and for each choice,
compute all possible choices of the subset $R_0$ of $r$ elements on
its complement. The complement $S=V_e \setminus (B \cup R_0)$ has then
$2f$-elements. A way of decomposing the set $S$ as a disjoint union of
$f$ subsets of size $2$ will be called a \emph{$2$-partition}.

\begin{problem}
Given a set $S$ with $2f$ elements, how to compute its $2$-partitions?  
\end{problem}

%Let $T\in T(N)$ with an orietation like before. A \emph{coloring of $T$} is a disjoint union decomposition of the form $V_{e}=B\sqcup R\sqcup F$, where the vertices in $B$ (resp. $R$) will be called \emph{blue vertices} (resp. \emph{red vertices}), and if $F\neq\emptyset$, $F$ in turn is a disjoint union of sets $F_i$, each of them with exactly $2$ elements. The vertices in each $F_i$ will be called \emph{free vertices}.

\begin{defi}
  Let $n$ be a positive integer. A $2n$-tuple $(v_1,\ldots,v_{2n})$ of
  integers is \emph{ordered} if it satisfies the following two properties:
  \begin{itemize}
  \item The sub-tuple $(v_1,v_3,\ldots,v_{2n-1})$ made of the odd
    entries is ordered.
  \item For any odd index $i$, $1 \le i \le 2n-1$, $v_i < v_{i+1}$.
  \end{itemize}
  Similarly, we say that an element of $\SS_{2f}$ is ordered if so is the
  $2f$-tuple it represents.
\end{defi}

Let $S=\{1,\ldots,2f\}$ and let $\Perm$ denote the set of ordered
$2f$-tuples made of elements of $S$. In particular, the coordinates of
elements of $S$ are all different. Let
$\Psi\colon \Perm \to \{2\text{-partitions}\}$ be the map given by
\begin{equation}
  \label{eq:partitions}
  \Psi(v_1,\ldots,v_{2f})=\{v_1,v_2\} \cup \cdots \cup \{v_{2f-1},v_{2f}\}.
\end{equation}

\begin{lemma}
  With the previous notation, the map $\Psi$ is bijective.
\end{lemma}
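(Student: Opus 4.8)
The claim is that the map $\Psi\colon\Perm\to\{2\text{-partitions of }S\}$ is a bijection, where $\Perm$ consists of ordered $2f$-tuples. The strategy is to construct an explicit inverse and check both composites are the identity. The key observation is that the \emph{ordered} condition picks out a canonical representative: among all the $2f$-tuples that represent a given $2$-partition, exactly one is ordered.

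\textbf{Surjectivity.} Given a $2$-partition $\{A_1,\ldots,A_f\}$ of $S=\{1,\ldots,2f\}$, I would construct a preimage as follows. For each block $A_j$, write $A_j=\{a_j,b_j\}$ with $a_j<b_j$; this orders the two elements within each pair. Then sort the blocks by their smaller element: relabel so that $a_1<a_2<\cdots<a_f$ (these are distinct since the blocks are disjoint). The tuple $(a_1,b_1,a_2,b_2,\ldots,a_f,b_f)$ then satisfies the two defining conditions of being ordered: the odd-indexed subtuple is $(a_1,\ldots,a_f)$, which is increasing hence ordered, and for each odd index we have $a_j<b_j$ by construction. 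Applying $\Psi$ to this tuple returns $\{a_1,b_1\}\cup\cdots\cup\{a_f,b_f\}=\{A_1,\ldots,A_f\}$, the original partition.

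\textbf{Injectivity.} Suppose $(v_1,\ldots,v_{2f})$ and $(w_1,\ldots,w_{2f})$ are both ordered and $\Psi$ sends them to the same partition. I need to show the tuples are equal. The condition $v_i<v_{i+1}$ for odd $i$ forces $v_{2j-1}=\min(v_{2j-1},v_{2j})$, so the odd entries are precisely the smaller elements of the blocks $\{v_{2j-1},v_{2j}\}$; likewise for $w$. Since the set of blocks is the same, the set $\{v_1,v_3,\ldots,v_{2f-1}\}$ of block-minima equals $\{w_1,w_3,\ldots,w_{2f-1}\}$. Both odd subtuples list this common set in increasing order (here ``ordered'' for a tuple of distinct integers means strictly increasing, which I should confirm matches the paper's recursive definition — for a tuple of distinct entries the recursive ``ordered'' condition unwinds to strictly increasing), hence $v_{2j-1}=w_{2j-1}$ for all $j$. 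Then $v_{2j-1}$ and $w_{2j-1}$ are the minimum of the same block, so that block is determined, and its other element is $v_{2j}=w_{2j}$. Thus the tuples coincide.

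\textbf{Main obstacle.} There is no real obstacle; the only subtlety is bookkeeping the definition of ``ordered'' for tuples, since it is defined recursively (the odd subtuple must itself be ordered) and one must check this unwinds to the plain statement ``strictly increasing'' when all entries are distinct — which holds because at each recursion level the same within-pair and block-ordering constraints apply. Everything else is a direct construction-and-check argument. I would keep the write-up short: define the inverse map explicitly and verify the two composites, citing the distinctness of entries of elements of $\Perm$ as noted just before the lemma statement.
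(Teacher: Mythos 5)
Your proof is correct and follows essentially the same route as the paper: injectivity by noting the within-pair condition forces each odd entry to be the block minimum and the increasing odd subtuple then pins down the whole tuple, and surjectivity by sorting each block internally and then sorting blocks by their minima. The only extra content is your (reasonable) reading of the slightly informal ``ordered'' condition on the odd subtuple as ``strictly increasing,'' which matches the paper's intent.
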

\begin{proof}
  Injectivity: let $(v_1,\ldots,v_{2f})$ and $(w_1,\ldots,w_{2f})$ be
  elements of $\Perm$ such that
  \[
    \Psi(v_1,\ldots,v_{2f}) = \Psi(w_1,\ldots,w_{2f}).
  \]
  Then
  $\{\{v_1,v_2\},\ldots,\{v_{2f-1},v_{2f}\}\} =
  \{\{w_1,w_2\},\dots,\{w_{2f-1},w_{2f}\}\}$, say
  $\{v_1,v_2\} = \{w_i,w_{i+1}\}$ for some odd index $i$. The second
  condition of an ordered tuple implies that $v_1<v_2$ and
  $w_i < w_{i+1}$ so $v_1 = w_i$ and $v_2 = w_{i+1}$. Repeating this
  argument it follows that the set of odd entries of the first
  tuple must equal the set of odd entries of the second one, and since
  both sets are ordered, the tuples must be the same.

  Surjectivity: let $S = \bigcup_{i=1}^f\{v_{2i-1},v_{2i}\}$ be a
  $2$-partition. Clearly we can order each set so that
  $v_{2i-1} < v_{2i}$ for all values of $i$, and we can also order the
  sets so that $v_{2i-1}< v_{2i+1}$ for all $i=1,\dots,f-1$. By definition, 
  the tuple $(v_1,\ldots,v_{2f})$ is ordered (so is an element of $P$).
\end{proof}

\begin{prop}
  Let $T\in \Bi(m,3)$ be a bi-valent tree with an orientation, and let
  $(b,r,f)$ be non-negative integers satisfying that $b+r+2f=m+2$. The
  following algorithm computes all possible coloring on $V_e$ with
  parameters $(b,r,f)$:
  
\begin{algorithmic}[1]
\STATE $C \leftarrow \emptyset$
\FOR {$B \subseteq V_e$, $|B| = b$}
\STATE $R=V_e \setminus B$
\FOR {$R_0 \subseteq R$, $|R_0| = r$}
\STATE $R_1 = R \setminus R_0$
\FOR {$\sigma\in \SS_{R_1}$}
\IF {\text{$\sigma$ is ordered}}
\STATE $C \leftarrow C\cup (B,R_0,\Psi(\sigma))$
\ENDIF
\ENDFOR
\ENDFOR
\ENDFOR
\RETURN $C$
\end{algorithmic}
\end{prop}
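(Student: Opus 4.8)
The plan is to show the algorithm is correct by establishing that it enumerates, without repetition, every coloring of $V_e$ with parameters $(b,r,f)$. Recall that such a coloring is precisely a decomposition $V_e = B \cup R_0 \cup \bigcup_i F_i$ where $|B| = b$, $|R_0| = r$, and the $F_i$ form a $2$-partition of the complementary set of size $2f$, together with the identification of $F_i$ with a pair in the involution. So a coloring is the same datum as a triple $(B, R_0, \mathcal{F})$ where $B \subseteq V_e$ has $b$ elements, $R_0 \subseteq V_e \setminus B$ has $r$ elements, and $\mathcal{F}$ is a $2$-partition of $R_1 := V_e \setminus (B \cup R_0)$.

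First I would observe that the outer two loops of the algorithm clearly run exactly once over each valid pair $(B, R_0)$: the first loop ranges over all $b$-subsets $B$ of $V_e$, and for each such $B$ the second loop ranges over all $r$-subsets $R_0$ of $R = V_e \setminus B$. Thus it suffices to check that, for a fixed pair $(B, R_0)$ with $R_1 = R \setminus R_0$ of size $2f$, the inner loop produces exactly the set of $2$-partitions of $R_1$, each exactly once. Here is where I invoke the preceding lemma: the map $\Psi \colon \Perm \to \{2\text{-partitions of } \{1,\ldots,2f\}\}$ is a bijection, where $\Perm$ is the set of ordered $2f$-tuples. The inner loop iterates $\sigma$ over $\SS_{R_1}$, keeps only those $\sigma$ that are ordered, and for each adds $(B, R_0, \Psi(\sigma))$ to $C$. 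Identifying $R_1$ with $\{1,\ldots,2f\}$ via its induced total order (the labels on $V_e$), the ordered elements of $\SS_{R_1}$ are exactly the elements of $\Perm$, so the inner loop's output is $\{(B, R_0, \Psi(\sigma)) : \sigma \in \Perm\}$, which by the bijectivity of $\Psi$ is precisely $\{(B, R_0, \mathcal{F}) : \mathcal{F} \text{ a } 2\text{-partition of } R_1\}$, with no repeats.

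Combining these observations: as $(B, R_0)$ ranges over all valid pairs and, for each, $\mathcal{F}$ ranges over all $2$-partitions of $R_1$, we obtain each triple $(B, R_0, \mathcal{F})$ exactly once, and these triples are in bijection with colorings of $V_e$ with parameters $(b,r,f)$. Hence $C$ is exactly the set of such colorings. I do not expect any serious obstacle here; the only point needing a little care is the bookkeeping that a ``coloring'' in the sense of Definition~\ref{defi:tree-diagram} (a partition $V_e = B \cup R_0 \cup \bigcup F_i$ together with the involution data) is the same thing as the triple $(B, R_0, \mathcal{F})$ — the involution $\iota$ is determined by $\mathcal{F}$ (it swaps the two elements of each $F_i$ and fixes $R_0$), so no information is lost or double-counted. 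A secondary subtlety worth spelling out is that the condition ``$\sigma$ is ordered'' depends on the total order on $R_1$ inherited from the labeling of the tree's vertices; since this order is fixed throughout, the identification $R_1 \cong \{1,\ldots,2f\}$ used to invoke the lemma is canonical, and the argument goes through verbatim.
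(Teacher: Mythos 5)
Your proof is correct and follows exactly the route the paper intends: the paper in fact states this proposition without a written proof, relying on the preceding discussion (colorings are triples $(B,R_0,\mathcal{F})$ with $\mathcal{F}$ a $2$-partition of the complement) and the lemma that $\Psi$ is a bijection between ordered tuples and $2$-partitions, which is precisely the argument you spell out. Your additional care about identifying $R_1$ with $\{1,\ldots,2f\}$ via the vertex labeling and recovering the involution from $\mathcal{F}$ only makes explicit what the paper leaves implicit.
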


Let $\Tree$ be a tree diagram coming from a subgroup $\Sub$ of
$\PSL_2(\ZZ)$. Then 
\begin{equation}
  \label{eq:index-coloring}
[\PSL_2(\ZZ):\Sub]=3m+b,  
\end{equation}
where $m$ is the number of internal vertices of $\Tree$ and $b$ is the
number of blue external vertices (see for example the proposition in 
page 1078 of \cite{Kulkarni1991}). This implies that there are finitely many
triples $(b,r,f)$ corresponding to index $d$-subgroups of
$\PSL_2(\ZZ)$.

\begin{prop}
  Let $d$ be positive integer. The following algorithm computes all
  possible triples $(b,r,f)$ attached to index $d$ subgroups of
  $\PSL_2(\ZZ)$:
\begin{algorithmic}[1]
\STATE $M \leftarrow\emptyset$
\FOR{$\lceil\frac{d-2}{4}\rceil\leq m\leq\lfloor\frac{d}{3}\rfloor$}
\STATE $b\leftarrow d-3m$
\FOR {$j=0,\ldots,\lfloor \frac{m-b}{2} \rfloor +1$}
\STATE $M\leftarrow M\cup\{(b,m+2-b-2j,j)\}$
\ENDFOR
\ENDFOR
\RETURN $M$
\end{algorithmic}
  \end{prop}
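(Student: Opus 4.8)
The plan is to prove that the algorithm enumerates exactly the triples $(b,r,f)$ of non-negative integers for which there exists a tree diagram (equivalently, a special polygon) whose associated subgroup of $\PSL_2(\ZZ)$ has index $d$. The proof will combine the combinatorial identity $b+r+2f = m+2$ governing colorings with the index formula \eqref{eq:index-coloring}, namely $d = 3m+b$, and then verify that the loop bounds in the algorithm pick out precisely the integer solutions.

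First I would set up the constraints. Given a tree diagram $\Tree$ with $m$ internal vertices, Lemma~\ref{lemma:rel-iner-external} (with $n=3$) gives $|V_e(\Tree)| = m+2$, and a coloring with parameters $(b,r,f)$ requires $b+r+2f = m+2$ with $b,r,f \ge 0$. Combined with $d = 3m+b$ from \eqref{eq:index-coloring}, we get $b = d-3m$, so $m$ determines $b$, and then $r + 2f = m+2-b = m+2-(d-3m) = 4m+2-d$, so $r = 4m+2-d-2f$. For such a triple to be realizable we need: $m \ge 1$ (a tree diagram has at least one internal vertex), $b = d-3m \ge 0$ (so $m \le d/3$), $r+2f = 4m+2-d \ge 0$ (so $m \ge (d-2)/4$), and $f \ge 0$ with $r = 4m+2-d-2f \ge 0$ (so $0 \le f \le (4m+2-d)/2 = 2m+1-d/2$). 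I would then confirm these inequalities match the loop bounds: $\lceil\frac{d-2}{4}\rceil \le m \le \lfloor\frac{d}{3}\rfloor$ for the outer loop (since $m$ is an integer, the ceiling/floor are correct integer relaxations of $m \ge \frac{d-2}{4}$ and $m \le \frac{d}{3}$), $b \gets d-3m$, and the inner loop $j = 0,\ldots,\lfloor\frac{m-b}{2}\rfloor+1$ running over $f$, outputting $(b,\,m+2-b-2j,\,j)$; note $m-b = m-(d-3m) = 4m-d$, so $\lfloor\frac{m-b}{2}\rfloor+1 = \lfloor\frac{4m-d}{2}\rfloor+1$, which I must check equals $\lfloor\frac{4m+2-d}{2}\rfloor = \lfloor 2m+1-d/2\rfloor$, the true upper bound on $f$; indeed $\lfloor\frac{4m-d}{2}\rfloor+1 = \lfloor\frac{4m-d+2}{2}\rfloor$ since adding $2$ inside the floor shifts by exactly $1$. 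So the index set is exactly right.

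Next I would address realizability — the genuinely substantive direction. The inequalities above are \emph{necessary}; I must argue they are \emph{sufficient}, i.e. for every such triple there really is a tree diagram realizing it (otherwise the algorithm would overcount). For a fixed admissible $m \ge 1$, the set $\Bi(m,3)$ is nonempty (Theorem~\ref{algbif} constructs it, starting from the unique element of $\Bi(1,3)$ and extending), so pick any $T \in \Bi(m,3)$ and any orientation on it. Then $|V_e(T)| = m+2 = b+r+2f$, so we may color $b$ of the external vertices blue, declare $r$ of the remaining red and fixed by $\iota$, and pair up the last $2f$ into $f$ two-element orbits of $\iota$; this is a valid tree diagram in the sense of Definition~\ref{defi:tree-diagram} with parameters $(b,r,f)$, and by \eqref{eq:index-coloring} its associated subgroup has index $3m+b = d$. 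Hence every triple satisfying the numerical constraints is attained.

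The main obstacle I anticipate is precisely the floor-arithmetic bookkeeping in matching $\lfloor\frac{m-b}{2}\rfloor+1$ to the true bound on $f$, together with the edge cases: checking that when $d-2$ is not divisible by $4$ the ceiling is handled correctly, and that the "$+1$" in the inner loop does not introduce a spurious triple with negative $r$ (it does not, because when $f$ reaches $\lfloor 2m+1-d/2\rfloor$ we get $r \in \{0,1\}$ depending on the parity of $4m-d$, and never $r<0$). A secondary point is being careful that $m+2-b-2j = r$ is indeed the output and that it is non-negative throughout the loop range; this is exactly the computation $r = 4m+2-d-2j \ge 4m+2-d-2(2m+1-d/2) = 0$ (for even $4m-d$) or $\ge 1$ (for odd), so all is well. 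Once these routine inequalities are verified, correctness of the algorithm follows.
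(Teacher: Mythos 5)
Your derivation of the constraints is the same one the paper uses: the coloring identity $b+r+2f=m+2$ together with the index formula $d=3m+b$, which yields $b=d-3m$, the range $3m\le d\le 4m+2$ (equivalently $\lceil\frac{d-2}{4}\rceil\le m\le\lfloor\frac{d}{3}\rfloor$), and $0\le f\le\lfloor\frac{4m+2-d}{2}\rfloor=\lfloor\frac{m-b}{2}\rfloor+1$; the paper's proof is just this necessity argument, phrased as ``the number of odd vertices cannot exceed the number of cusps,'' and it stops there. Where you go beyond the paper is the sufficiency (realizability) direction: you check that every triple the algorithm emits is actually attained, by taking any element of $\Bi(m,3)$ with an arbitrary orientation, distributing the colors and the involution freely over the $m+2$ external vertices, and invoking the correspondence between tree diagrams and finite index subgroups together with $[\PSL_2(\ZZ):\Sub]=3m+b$. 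That extra leg is legitimate within the paper's framework (Theorem~\ref{thm:representatives-trees} gives nonemptiness of $\Bi(m,3)$, and Section~\ref{section:treetosubgroup} attaches a subgroup to any tree diagram) and it buys a genuinely two-sided correctness statement for the algorithm, which the paper's two-sentence proof does not supply. Your floor-arithmetic verification of the inner bound is also correct.

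One small inaccuracy: you assert $m\ge 1$ because ``a tree diagram has at least one internal vertex,'' but Definition~\ref{defi:tree-diagram} only requires at least one edge, and the single-edge tree (no internal vertices) is a bona fide tree diagram — it is exactly what occurs for $d=1,2$, where the algorithm's outer loop does include $m=0$. This does not damage your argument for $d\ge 3$ (there the lower loop bound already forces $m\ge 1$), and for $d\le 2$ the realizability claim is restored by using the single-edge tree, but as written your constraint set and the loop's index set do not literally coincide in those two boundary cases.
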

\begin{proof}
  Let $\Sub$ be a subgroup of $\PSL_2(\ZZ)$ of index $d$, and let
  $m+2$ be the number of cusps belonging to a special polygon. The
  formula $d=3m+b$ together with the fact that the number of odd vertices cannot exceed
  the number of cusps, give the restriction
  \[
  3m\leq d\leq 4m+2.
  \]
  This completes the proof. 
\end{proof}

Combining the different algorithms described in the present section,
we can compute given a positive integer $d$ the set of non-equivalent
tree diagrams corresponding to subgroups of index $d$ in $\PSL_2(\ZZ)$.

\section{Tree diagrams, generalized Farey symbols and subgroups}
\label{section:treetosubgroup}

As explained in the introduction, our tree diagram is obtained as the
tree of $f$-edges of a special polygon. It is a natural question how to
recover information of the special polygon that is missing in the tree
diagram. For example, how can we compute it set of cusps? (note that
since the special polygon attached to a subgroup is not unique, the
answer will also not be unique).

As explained in \cite{Kulkarni1991}, the cusps of a special polygon
form what is called a \emph{generalized Farey symbol}, since if
$\frac{a}{b}$ and $\frac{c}{d}$ are two consecutive cusps of a special
polygon then $|ad-bc|=1$. %(before (2.5)

\begin{defi}
  A \emph{generalized Farey symbol} (g.F.s. for short) is an expression
  of the form
  \[
    \{-\infty,c_2,c_3,\ldots,c_n,\infty\}
  \]
  where
  \begin{enumerate}
  \item $c_2$ and $c_n$ are integers, and some $c_i = 0$,
    
  \item $c_i = \frac{a_i}{b_i}$ are rational numbers in their reduced
    forms and ordered according to their magnitudes, such that
    \[
|a_ib_{i+1}-b_ia_{i+1}| = 1, \qquad i=2,3,\ldots,n-1.
      \]
  \end{enumerate}
\end{defi}

\begin{remark}
  Our definition is a little different from Kulkarni's one. The
  elements $-\infty$ and $\infty$ are identified as points on the
  polygon, so they correspond to the same point at infinity, but for
  algorithmic purposes it is better to represent them as different
  elements. In particular, we will represent $-\infty$ by the fraction
  $\frac{-1}{0}$ and $\infty$ by $\frac{1}{0}$. This is consistent
  with the \textsf{GAP} package developed in \cite{MR2735087} for computing with
  generalized Farey symbols.
  \label{rem:gfs-infty}
\end{remark}

Any path between two cusps (members of a g.F.s.)\!\! will necessarily go
through an end point of a tree diagram. As suggested in
\cite{Kulkarni1991}, one can add the information of the vertex color
to the g.F.s. forming what is called a \emph{Farey symbol}.
\begin{defi}
  A \emph{Farey symbol}
is an expression
of the form:
 \begin{figure}[h]
   \centering
   \begin{tikzpicture}
     \draw[thick,bend right] (0,0) node[label={$\{-\infty$},inner
     sep=0pt]{} to node[label={below:$p_{1}$},inner sep=0pt]{}
     (0.8,0) node[label={$c_2$},inner sep=0pt]{} to
     node[label={below:$p_2$},inner sep=0pt]{} (1.6,0)
     node[label={$c_3$},inner sep=0pt]{} to
     node[label={below:$p_3$},inner sep=0pt]{} (2.4,0)
     node[label={$c_{4}$},inner sep=0pt]{} to
     node[label={below:$p_4$},inner sep=0pt]{} (3.2,0)
     node[label={$\cdots$},inner sep=0pt]{} to
     node[label={below:$\cdots$},inner sep=0pt]{} (4,0)
     node[label={$c_n$},inner sep=0pt]{} to
     node[label={below:$p_{n}$},inner sep=0pt]{} (4.8,0)
     node[label={$\infty\}$},inner sep=0pt]{};
   \end{tikzpicture}
\end{figure}

\noindent where
\begin{itemize}
\item $\{-\infty,c_2,\ldots,c_n,\infty\}$ is a g.F.s.,
  
\item each symbol $p_i$ is one of $\circ$, $\bullet$ or a number label,
  depending on whether the line on the boundary that joins $c_{i}$
  with $c_{i+1}$ is even, odd or a free side (in which case the same
  label is used for its matching line).
\end{itemize}

\end{defi}

  % , and $p_i$ will be called \emph{even interval}, \emph{odd
  % interval} and \emph{free interval}, respectively. Numeric labels
  % are different for each pair.

\begin{defi}
  Let $\Tree$ be a tree diagram, and let $v,w$ be two external vertices
  next to each other (i.e. one is to the left or right of the
  other). Define the \emph{bipartite distance} between $v$ and $w$
  (denoted by $\bd(v,w)$) as the distance between $v$ and $w$ in the
  tree obtained from $\Tree$ while constructing the bipartite cuboid graph
  before identifying the glued vertices.
\end{defi}

\begin{lemma}
  \label{lemma:bipartite-distance}
  Let $\Tree$ be a tree diagram, and let $v,w$ be external vertices next
  to each other. Let $d$ denote the distance between $v$ and $w$
  on the tree $\Tree$. Then
  \begin{equation}
    \label{eq:dist-relations}
\bd(v,w) = 2d -2 + \begin{cases}
     0, & \text{ if $v$ and $w$ are both even (red)},\\
     2, & \text{ if $v$ and $w$ are both odd (blue)},\\
     1, & \text{ otherwise}.
      \end{cases}
  \end{equation}
\end{lemma}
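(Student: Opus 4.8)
The plan is to track exactly what happens to the reduced path from $v$ to $w$ in $\Tree$ when we pass to the tree $\Tree'$ used to build the bipartite cuboid graph. Recall that $\Tree'$ is obtained from $\Tree$ by a two-step modification: first, in the construction described after Definition~\ref{defi:tree-diagram}, whenever two consecutive (adjacent) internal vertices are both blue, we insert a new red vertex between them; second, the vertices of valence $2$ that were removed to form the tree diagram are reinstated. The key observation is that in $\Tree$ every internal vertex has valence $3$ (and is ``blue'' in the bipartite cuboid picture), while in $\Tree'$ the internal vertices strictly alternate in color red/blue along any reduced path, since reinstating the valence-$2$ red vertices precisely subdivides each odd edge. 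So I would first establish the following local picture: each edge of the reduced $v$--$w$ path in $\Tree$ that joins two internal vertices contributes exactly $2$ to the bipartite distance (it becomes an internal-blue--red--internal-blue segment of length $2$), whereas an edge of $\Tree$ joining an internal vertex to an external (terminal) vertex contributes a variable amount depending on the color of that terminal vertex.

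Concretely, write the reduced path in $\Tree$ as $v = u_0, u_1, \dots, u_d = w$, where $u_1, \dots, u_{d-1}$ are internal and $u_0, u_d$ are external. The $d-2$ interior edges $u_1u_2, \dots, u_{d-2}u_{d-1}$ each get subdivided once, contributing $2(d-2)$. It remains to analyze the two ``end'' edges $u_0u_1$ and $u_{d-1}u_d$. Here I would argue: if the terminal vertex $v = u_0$ is blue, then in the bipartite cuboid graph it is an odd vertex directly adjacent to the internal odd vertex $u_1$, but since consecutive odd vertices force the insertion of a red vertex, this end edge contributes $2$ to $\bd$; if $v$ is red (even), it is already the right color to be adjacent to the blue internal vertex $u_1$, and after reinstating valence-$2$ vertices one checks this end edge contributes exactly $1$. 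The same dichotomy holds at the $w$-end. Summing: $\bd(v,w) = 2(d-2) + c_v + c_w$ where $c_v, c_w \in \{1,2\}$ according as the corresponding endpoint is red or blue. Rewriting, $2(d-2) + c_v + c_w = 2d - 2 + (c_v + c_w - 2)$, and $c_v + c_w - 2$ equals $0$, $1$, or $2$ according as $\{v,w\}$ are both red, mixed, or both blue — which is exactly \eqref{eq:dist-relations}.

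The step I expect to be the main obstacle is pinning down precisely the contribution of the two end edges, i.e.\ verifying the claim $c_v \in \{1,2\}$ with the stated color dependence. This requires being careful about two conventions simultaneously: (i) the rule ``if two consecutive vertices of the tree are blue, add one extra red vertex'' from the paragraph after Definition~\ref{defi:tree-diagram}, applied at the boundary where one of the two blue vertices is external; and (ii) the exact way valence-$2$ vertices are reinstated when building $\Tree'$ from the tree diagram, recalling from Section~\ref{sec:kulkarni} that a valence-$2$ vertex in the full $f$-edge graph is necessarily red (it lies on an even line or a free side). One must make sure the ``subdivision'' count is not accidentally off by one at a terminal vertex that happens to be red versus the generic red internal vertex. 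I would handle this by treating the three cases of Example~\ref{exm:isom}-type small diagrams explicitly as a sanity check, and then giving the general argument by the local analysis above. The induction-free, purely local nature of the argument (every edge of the $\Tree$-path contributes independently to $\bd$) is what makes the bookkeeping manageable once the boundary conventions are fixed.
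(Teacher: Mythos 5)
Your argument is correct and is essentially the paper's own proof: the paper likewise follows the unique reduced path through the $d-1$ internal vertices, counts the $d-2$ red vertices inserted between consecutive internal (blue) vertices to get $2d-2$, and then adds one extra vertex at each endpoint that is blue. Your edge-by-edge bookkeeping ($2(d-2)+c_v+c_w$ with $c_v,c_w\in\{1,2\}$) is just a slight reorganization of the same count, so there is nothing genuinely different to compare.
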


\begin{proof}
  Since $\Tree$ is a tree, there is a unique reduced path between $v$ and $w$, and
  since the distance between $v$ and $w$ is $d$, the path goes through
  $d-1$ internal vertices. The bipartite graph is obtained by adding
  an extra (red) vertex between each pair of internal vertices, giving
  an extra $d-2$ steps. At last, if either ending point is odd
  (i.e. blue), we need to add an extra vertex next to it.
\end{proof}

\begin{prop}
\label{prop:g.F.s.distance}
Let $\Tree$ be a tree diagram, and let $v,w$ be external vertices next to
each other. Let $x=\frac{a}{b}$, $y$ and $z=\frac{c}{d}$ be three
consecutive cusps of a special polygon attached to $\Tree$ satisfying that
$v$ is between $x$ and $y$ and that $w$ is between $y$ and $z$.  Then
\begin{equation}
  \label{eq:bip-dist}
    \bd(v,w)= 2|ad-bc| + \begin{cases}
     0, & \text{ if $v$ and $w$ are both even (red)},\\
     2, & \text{ if $v$ and $w$ are both odd (blue)},\\
     1, & \text{ otherwise}.
      \end{cases}
\end{equation}
\end{prop}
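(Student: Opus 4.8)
The plan is to normalise by a $\PGL_2(\Z)$-transformation, describe a special polygon $\mathcal P$ for $\Sub$ explicitly near the cusp $y$, and then count tiles of the modular tessellation. Recall that by construction $\bd(v,w)$ is the distance between $v$ and $w$ in the tree $\mathcal B$ of $f$-edges of $\mathcal P$ taken \emph{before} side identifications---equivalently, $\mathcal B$ is obtained from $\Tree$ by reinserting one valence-$2$ red vertex inside every edge joining two blue vertices. Since $x=a/b$ and $y$ are Farey neighbours, some $\gamma\in\PGL_2(\Z)$ sends $x\mapsto 0$ and $y\mapsto\infty$; replacing $\mathcal P$ by a $\PGL_2(\Z)$-translate changes neither $\mathcal B$ nor the colours of $v,w$ nor the integer $|ad-bc|$, so I may assume $x=0$, $y=\infty$ and $z=N$ with $N\ge 0$ an integer (and $N=0$ forces $\mathcal P$ to be a fundamental domain for $\PSL_2(\Z)$, handled directly). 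Here $|ad-bc|=|0\cdot1-1\cdot N|=N$, so the claim becomes $\bd(v,w)=2N+\varepsilon$, where $\varepsilon=|\{v,w\}\cap B|\in\{0,1,2\}$ is the case term of \eqref{eq:bip-dist}.

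The main step is to locate the two sides of $\partial\mathcal P$ incident to $\infty$: the side between $x$ and $y$ carrying $v$, and the side between $y$ and $z$ carrying $w$. The edges of the modular tessellation issuing from $\infty$ are the vertical rays $\{\operatorname{Re}z=t\}$, $t\in\tfrac12\Z$; those with $t\in\Z$ are even edges, with finite endpoint the translate $i+t$ of $i$, those with $t\in\tfrac12+\Z$ are odd edges, with finite endpoint a translate of $\rho$, and the two families alternate around $\infty$. If $v$ is red the side between $x$ and $y$ is an even line (whether a matched pair or a free side), so it meets $\infty$ along $\{\operatorname{Re}z=0\}$ and $v=i$. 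If $v$ is blue it is a pair of odd edges meeting at $v$, so $v$ is the centre of a Farey triangle with $0$ and $\infty$ among its vertices---necessarily $\{-1,0,\infty\}$ and not $\{0,1,\infty\}$, since $\mathcal P$ is convex, has $z=N>0$ among its vertices, and so must lie to the right of this side near $\infty$---whence $v=\rho-1$ and the side meets $\infty$ along $\{\operatorname{Re}z=-\tfrac12\}$. Symmetrically the side between $y$ and $z$ meets $\infty$ along $\{\operatorname{Re}z=N\}$, with $w=i+N$, if $w$ is red, and along $\{\operatorname{Re}z=N+\tfrac12\}$, with $w=\rho+N$, if $w$ is blue. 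Writing $t^-,t^+$ for the real parts of these two edges, we get $t^-\in\{0,-\tfrac12\}$, $t^+\in\{N,N+\tfrac12\}$, and $t^+-t^-=N+\tfrac12|\{v,w\}\cap B|=N+\tfrac{\varepsilon}{2}$.

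Finally I would assemble the count. The part of $\mathcal P$ near $\infty$ is the union of the $\PGL_2(\Z)$-translates of $\FF$ lying in the vertical strip $t^-\le\operatorname{Re}z\le t^+$; there are $2(t^+-t^-)=2N+\varepsilon$ of them, and consecutive ones share an edge $\{\operatorname{Re}z=t\}$ with $t\in\tfrac12\Z\cap(t^-,t^+)$. Each such tile carries a single $f$-edge, and since its two bounding edges at $\infty$ are one even and one odd, that $f$-edge joins their finite endpoints; hence the $f$-edges of these $2N+\varepsilon$ tiles, read off in order across the strip, concatenate into a walk in $\mathcal B$ from the finite endpoint $v$ of $\{\operatorname{Re}z=t^-\}$ to the finite endpoint $w$ of $\{\operatorname{Re}z=t^+\}$. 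This walk is reduced, because at each intermediate vertex its two neighbours are the finite endpoints of \emph{distinct} vertical edges at $\infty$; since $\mathcal B$ is a tree, a reduced walk is the unique path between its endpoints, so $\bd(v,w)=2N+\varepsilon=2|ad-bc|+\varepsilon$---equivalently, by Lemma~\ref{lemma:bipartite-distance}, $v$ and $w$ lie at distance $|ad-bc|+1$ in $\Tree$. The step I expect to cost the most effort is the middle paragraph: the convexity argument deciding whether a blue endpoint is $\rho$ or $\rho-1$, and the identification of ``the part of $\mathcal P$ near $\infty$'' with the strip of $2N+\varepsilon$ tiles. Everything else is routine once the tessellation near $\infty$ is written out.
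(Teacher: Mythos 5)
Your proof is correct and follows essentially the same route as the paper's: normalize so that the middle cusp $y$ becomes $\infty$ (the paper uses $\SL_2(\Z)$ and keeps $x,z$ as arbitrary integers, you use $\PGL_2(\Z)$ to fix $x=0$, $z=N\ge 0$), identify the two boundary sides at $\infty$ according to the colours of $v,w$, and count the $f$-edges of the tiles in the resulting vertical strip --- which is exactly the content of the paper's three figures, with your convexity and tree-path remarks supplying the details the figures leave implicit. The only blemish is the parenthetical on $N=0$: a two-cusp special polygon need not be a fundamental domain of $\PSL_2(\Z)$ (it can also be that of the index-two subgroup), but that degenerate case is indeed checked directly, so nothing is lost.
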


\begin{proof}
  Since everything is invariant under the action of $\SL_2(\ZZ)$, we
  can assume that $y$ is the infinity cusp. Then $x$ and
  $z$ are both integers (i.e. $b=d=1$) with $c<a$. The
  tessellation of the upper part of our special polygon then is one of
  the following:
  \begin{itemize}
  \item If $v$ and $w$ are both even (red) then it looks like
    Figure~\ref{fig:botheven}. Note that 
	\[
		2(a-c) = \bd(v,w)
	\]
	(since there are two
    $f$-edges between consecutive red vertical lines).
    
  \item If $v$ and $w$ are both odd (blue) then the tessellation looks like
    Figure~\ref{fig:bothodd}. Note that $ 2(a-c)+2 = \bd(v,w)$ (since there are two
    $f$-edges between consecutive blue vertical lines).
  \item If $v$ and $w$ have different colors (say $v$ is blue and $w$
    is red) then the tessellation looks like Figure~\ref{fig:hybridcase}. Note that
    $2(a-c)+1 = \bd(v,w)$.
  \end{itemize}
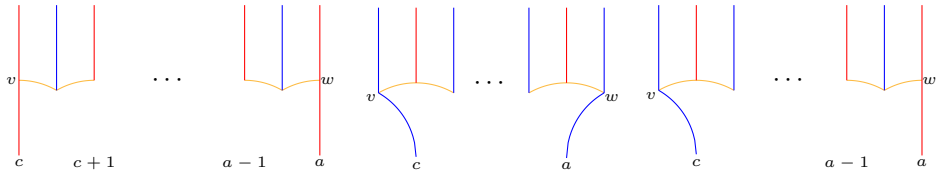
\begin{figure}[ht!]
  \begin{subfigure}{.3\textwidth}
   \centering
   \begin{tikzpicture}
   \draw[Dandelion] plot [domain=0:0.5] (\x,{sqrt(1-\x^2)}) plot [domain=0.5:1] (\x,{sqrt(1-(\x-1)^2)}) plot [domain=3:3.5] (\x,{sqrt(1-(\x-3)^2)}) plot [domain=3.5:4] (\x,{sqrt(1-(\x-4)^2)});
   \draw[blue] plot [domain=sqrt(3)/2:2] (0.5,\x) plot [domain=sqrt(3)/2:2] (3.5,\x); 
   \draw[red] (0,2) -- (0,0);
   \draw[red] (1,2) -- (1,1);
   \draw[red] (3,2) -- (3,1);
   \draw[red] (4,2) to (4,0);
  \draw (0,-0.1) node[font=\tiny]{$c$};
  \draw (1,-0.1) node[font=\tiny]{$c+1$};
  \draw (3,-0.1) node[font=\tiny]{$a-1$};
  \draw (4,-0.1) node[font=\tiny]{$a$};
  \draw (-0.1,1) node[font=\tiny]{$v$}; 
  \draw (4.1,1) node[font=\tiny]{$w$}; 
  \draw (2,1) node[]{$\cdots$};
  \end{tikzpicture}
  \caption{Case $v$ and $w$ even.}
  \label{fig:botheven}
\end{subfigure}%
\begin{subfigure}{.2\textwidth}
   \centering
   \begin{tikzpicture}
   \draw[Dandelion] plot [domain=0.5:1.5] (\x,{sqrt(1-(\x-1)^2)}) plot [domain=2.5:3.5] (\x,{sqrt(1-(\x-3)^2)});
   \draw[blue] plot [domain=sqrt(3)/2:2] (0.5,\x) plot [domain=sqrt(3)/2:2] (1.5,\x) plot [domain=sqrt(3)/2:2] (2.5,\x) plot [domain=sqrt(3)/2:2] (3.5,\x) plot [domain=0.5:1] (\x,{sqrt(1-\x^2)}) plot [domain=3:3.5] (\x,{sqrt(1-(\x-4)^2)}); 
   \draw[red] (1,2) -- (1,1);
   \draw[red] (3,2) -- (3,1);
  \draw (1,-0.1) node[font=\tiny]{$c$};
  \draw (3,-0.1) node[font=\tiny]{$a$};
  \draw (0.4,0.8) node[font=\tiny]{$v$}; 
  \draw (3.6,0.8) node[font=\tiny]{$w$}; 
  \draw (2,1) node[]{$\cdots$};
  \end{tikzpicture}
   \caption{Case $v$ and $w$ odd.}
  \label{fig:bothodd}
\end{subfigure}%\\[1ex]
\begin{subfigure}{.3\textwidth}
   \centering
   \begin{tikzpicture}
   \draw[Dandelion] plot [domain=0.5:1.5] (\x,{sqrt(1-(\x-1)^2)}) plot [domain=3:3.5] (\x,{sqrt(1-(\x-3)^2)}) plot [domain=3.5:4] (\x,{sqrt(1-(\x-4)^2)});
   \draw[blue] plot [domain=sqrt(3)/2:2] (0.5,\x) plot [domain=sqrt(3)/2:2] (1.5,\x) plot [domain=sqrt(3)/2:2] (3.5,\x) plot [domain=0.5:1] (\x,{sqrt(1-\x^2)}); 
   \draw[red] (1,2) -- (1,1);
   \draw[red] (3,2) -- (3,1);
   \draw[red] (4,2) to (4,0);
  \draw (1,-0.1) node[font=\tiny]{$c$};
  \draw (3,-0.1) node[font=\tiny]{$a-1$};
  \draw (4,-0.1) node[font=\tiny]{$a$};
  \draw (0.4,0.8) node[font=\tiny]{$v$}; 
  \draw (4.1,1) node[font=\tiny]{$w$}; 
  \draw (2.25,1) node[]{$\cdots$};
  \end{tikzpicture}
  \caption{Case $v$ odd and $w$ even.}
  \label{fig:hybridcase}
\end{subfigure}
\caption{Relation between the distance and the crossed product.}
\end{figure}
\end{proof}
From equations~(\ref{eq:dist-relations}) and (\ref{eq:bip-dist}) it
follows that if $x=\frac{a}{b}$, $y$ and $z=\frac{c}{d}$ are three
consecutive cusps with $v$ between $x$ and $y$ and $w$ between
$y$ and $z$, then
\begin{equation}
  \label{eq:gfs-rel}
  |ad-bc| = d(v,w)-1.
\end{equation}
These relations determine the g.F.s. uniquely (up to $\SL_2(\Z)$-equivalence).

\begin{thm}
\label{thm:tree-diagram2Farey-symbol}
Let $\Tree$ be a tree diagram, and $V_e=\{v_1,\dots,v_{m}\}$ denote the set
of external colored vertices. Assume that the set $V_e$ is ordered,
i.e. $v_{i+1}$ is to the right of $v_i$ for $1 \le i \le m$ $($with the
convention that $v_{m+1} = v_1)$. The following algorithm computes a
g.F.s. attached to $\Tree$:
  \begin{algorithmic}[1]
\STATE $F \leftarrow\{\frac{-1}{0},\frac{0}{1}\}$.
\IF {$m=2$}
\RETURN $F \cup \{\frac{1}{0}\}$.
\ELSE
\STATE $d \leftarrow d(v_1,v_2)$.
\STATE $F \leftarrow F \cup \{\frac{1}{d-1}\}$.
\FOR {$i=2,\ldots,m-2$}
\STATE $d\leftarrow d(v_{i},v_{i+1})$.
\STATE $\alpha \leftarrow \num(F[i+1])$.
\STATE $\beta\leftarrow \den(F[i+1])$.
\STATE $\gamma \leftarrow \num(F[i])$.
\STATE $\delta\leftarrow \den(F[i])$.
\STATE $\left(\begin{smallmatrix} y \\ x\end{smallmatrix}\right) \leftarrow \left(\begin{smallmatrix} \alpha & -\beta \\ \gamma & -\delta\end{smallmatrix}\right)^{-1}\left(\begin{smallmatrix} 1-d \\ -1\end{smallmatrix}\right)$.
\STATE $F \leftarrow F \cup \left\{\frac{x}{y}\right\}$.
\ENDFOR
\ENDIF
\RETURN $F \cup \left\{ \frac{1}{0}\right\}$.
\end{algorithmic}
\end{thm}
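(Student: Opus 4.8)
The plan is to verify that the algorithm produces a valid g.F.s. attached to $\Tree$ by checking three things in order: first, that the list $F$ it returns is literally of the form $\{-\infty, c_2, \ldots, c_n, \infty\}$ satisfying the defining conditions of a g.F.s.; second, that consecutive entries satisfy the unimodularity relation $|a_i b_{i+1} - b_i a_{i+1}| = 1$; and third, that the cross-product between non-adjacent entries matches the bipartite-distance data of $\Tree$, so that the output really is the set of cusps of some special polygon realizing $\Tree$. The key input is the relation~(\ref{eq:gfs-rel}): if $x = \frac{a}{b}$, $y$, $z = \frac{c}{d}$ are three consecutive cusps with the external vertex between $x$ and $y$ at distance $d(v_i, v_{i+1})$ from the one between $y$ and $z$, then $|ad - bc| = d(v_i, v_{i+1}) - 1$. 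This is exactly the recursion the algorithm solves.

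First I would dispose of the base case $m = 2$: here the tree diagram has two external vertices and one internal vertex path (or is the single edge), the special polygon is the standard fundamental domain (or a small modification), and $\{-\infty, 0, \infty\} = \{\frac{-1}{0}, \frac{0}{1}, \frac{1}{0}\}$ is the correct g.F.s.; the consecutive cross-products are $|(-1)(1) - (0)(0)| = 1$ and $|(0)(0) - (1)(1)| = 1$. Then, for $m > 2$, I would argue by induction on the loop index $i$. The initialization sets $F = \{\frac{-1}{0}, \frac{0}{1}, \frac{1}{d-1}\}$ where $d = d(v_1, v_2)$; one checks that the vertex $v_1$ sits between $\frac{-1}{0} = \infty$ and $\frac{0}{1} = 0$ and that $v_2$ sits between $\frac{0}{1}$ and $\frac{1}{d-1}$, and indeed $|0 \cdot (d-1) - 1 \cdot 1| = 1$ and, using~(\ref{eq:gfs-rel}) with $x = \frac{-1}{0}$, $z = \frac{1}{d-1}$, we get $|(-1)(d-1) - 0 \cdot 1| = d - 1 = d(v_1,v_2) - 1$, as required. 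The loop step is the heart: assuming $F[i] = \frac{\gamma}{\delta}$ and $F[i+1] = \frac{\alpha}{\beta}$ are the last two entries placed and are correct, the algorithm appends a new cusp $\frac{x}{y}$ forced by two linear equations, namely the unimodularity condition $|\alpha y - \beta x| = 1$ between $F[i+1]$ and the new entry, and the cross-product condition $|\gamma y - \delta x| = d(v_i, v_{i+1}) - 1$ coming from~(\ref{eq:gfs-rel}) applied to the triple $(F[i], F[i+1], \frac{x}{y})$ with $v_i$ between the first two and $v_{i+1}$ between the last two. Writing these as the matrix equation $\left(\begin{smallmatrix} \alpha & -\beta \\ \gamma & -\delta \end{smallmatrix}\right)\left(\begin{smallmatrix} y \\ x \end{smallmatrix}\right) = \left(\begin{smallmatrix} 1 - d \\ -1 \end{smallmatrix}\right)$ — with the signs chosen consistently so that the cusps stay increasing and the orientation is respected — one inverts the matrix (its determinant is $\pm 1$ by the inductive unimodularity hypothesis, so $\frac{x}{y}$ is a genuine reduced fraction) to recover the formula in the algorithm. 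Finally the closing step appends $\frac{1}{0} = \infty$, and one checks that the last real cusp $c_n$ is an integer and that $|c_n \cdot 0 - 1 \cdot 1| = 1$, completing the verification that $F$ is a g.F.s.

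The step I expect to be the main obstacle is pinning down the sign and orientation bookkeeping so that the specific matrix $\left(\begin{smallmatrix} \alpha & -\beta \\ \gamma & -\delta \end{smallmatrix}\right)$ and right-hand side $\left(\begin{smallmatrix} 1-d \\ -1 \end{smallmatrix}\right)$ are the correct choices among the several sign variants that all satisfy the absolute-value relations. Concretely, I would need to show that the new cusp lies on the correct side (so that the list remains sorted by magnitude as demanded in the definition of a g.F.s.), that the induced special-polygon tessellation is the one drawn in Figures~\ref{fig:botheven}--\ref{fig:hybridcase} rather than its mirror, and that the determinant of the coefficient matrix, which equals $-\alpha\delta + \beta\gamma = -(\alpha\delta - \beta\gamma)$, is exactly $\mp 1$ consistent with the previous step's orientation. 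This amounts to tracking the counterclockwise orientation of Remark~\ref{remark:orientation} through the recursion; once it is fixed at the initialization (where $x = \frac{-1}{0}$ forces the sign), it propagates, but making that propagation rigorous is the delicate part. Everything else — the unimodularity of consecutive entries, the reducedness of each fraction, the presence of $0$ among the $c_i$ and integrality of $c_2$ and $c_n$ — follows mechanically from~(\ref{eq:gfs-rel}), Lemma~\ref{lemma:bipartite-distance}, and Proposition~\ref{prop:g.F.s.distance}.
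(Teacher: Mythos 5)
Your overall strategy is the same as the paper's: normalize so that the first two cusps are $-\infty$ and $0$, then build the symbol one cusp at a time by intersecting the unimodularity condition with relation~(\ref{eq:gfs-rel}) for the triple formed by the two cusps last placed and the new one; the base case and the initialization agree with the paper. The gap is exactly at the step you flag as delicate, and it is not merely a sign issue. The two conditions you correctly derive are $|\num(F[i+1])\,y-\den(F[i+1])\,x|=1$ (unimodularity with the \emph{last} cusp placed) and $|\num(F[i])\,y-\den(F[i])\,x|=d(v_i,v_{i+1})-1$ (relation~(\ref{eq:gfs-rel}) with the \emph{second-to-last} cusp). But under the algorithm's own assignment $\alpha/\beta=F[i+1]$, $\gamma/\delta=F[i]$, the matrix equation $\left(\begin{smallmatrix}\alpha & -\beta\\ \gamma & -\delta\end{smallmatrix}\right)\left(\begin{smallmatrix}y\\ x\end{smallmatrix}\right)=\left(\begin{smallmatrix}1-d\\ -1\end{smallmatrix}\right)$ imposes $|\alpha y-\beta x|=d-1$ and $|\gamma y-\delta x|=1$, i.e.\ the right-hand sides $1-d$ and $-1$ sit in the rows of the opposite cusps. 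No choice of signs reconciles this when $d\neq 2$: it is a swap of magnitudes, so your assertion that the printed matrix equation ``is exactly the recursion'' you derived is unproved, and taken literally it fails (with $F[i]=\frac{0}{1}$, $F[i+1]=\frac{1}{1}$ and $d=3$, as occurs for the index-six g.F.s.\ $\{\infty,0,1,2,\infty\}$, the printed system returns $\frac{x}{y}=-1$ instead of the cusp $2$). The paper's proof attaches the letters the other way around ($\alpha/\beta$ is the second-to-last cusp $x_{i-1}$, $\gamma/\delta$ the last one $x_i$), and only with that reading does the matrix equation encode the correct system; you needed to either verify this identification or flag the mismatch, not assume compatibility.

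Second, the sign bookkeeping you defer as ``the delicate part'' is precisely what the paper settles, and it is short and does not involve tracking the counterclockwise orientation through the recursion: since $i\ge 2$, the numerators and denominators of the second-to-last cusp and of the new cusp are positive and the cusps increase, hence $|\alpha Y-\beta X|=\beta X-\alpha Y$, forcing the right-hand side $1-d$ in that row, and likewise $\gamma Y-\delta X=-1$ for the consecutive pair. This positivity-and-monotonicity argument is what converts the two absolute-value relations into one specific invertible linear system (invertibility being, as you note, the inductive unimodularity), and without it---or a substitute---your induction does not pin down which of the sign variants the algorithm must solve, which is the heart of the correctness claim. So: same route as the paper, but the key identification of the linear system is asserted rather than proved, and as stated it clashes with the conditions you yourself derived.
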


\begin{proof}
  Up to $\SL_2(\ZZ)$-equivalence, we can always assume that the first
  cusp (to the left of $v_1$) equals $\infty$ and the next one equals
  $0$. Since the number of cusps equals the number of external
  vertices, if there are only two such cusps, we are done. Suppose
  that there are at least three cusps, say
  $\left \{\frac{-1}{0},\frac{0}{1},\frac{a}{b}\right\}$, for some
  positive integers $a,b$. The integers $a,b$ satisfy the relations:
\[
  \begin{cases}
      |-b +0 \cdot a| &= d(v_1,v_2)-1,\\
        0\cdot b -a &= -1,    
\end{cases}
\]
where the second condition comes from~(\ref{eq:gfs-rel}). Then $a = 1$
and $b = d(v_1,v_2)-1$. If we constructed the first elements of our
g.F.s., say $\left\{\frac{-1}{0},\frac{0}{1},x_1,\ldots,x_i\right\}$,
with $x_{i-1} = \frac{\alpha}{\beta}$ and
$x_i = \frac{\gamma}{\delta}$, the next element $x_{i+1}=\frac{X}{Y}$
is a solution of the system
\[
  \begin{cases}
    |\alpha Y-\beta X| &= d(v_i,v_{i-1})-1,\\
    \gamma Y-\delta X &= -1.
\end{cases}
\]
The fact that $x_{i+1} > x_{i-1}$ together with the fact that both the
numerators and denominators of $x_{i-1}$ and $x_{i+1}$ are positive
integers (because $i \ge 2$) imply that $|\alpha Y-\beta X| = -\alpha Y+\beta X$.
\end{proof}

\begin{defi}
  A \emph{Kulkarni diagram} is an object consisting of a tree diagram,
  together with a choice of a g.F.s. (denoted by $G$) attached to it.
\end{defi}
%of the following two elements:
%\begin{itemize}
%  
%\item A tree diagram $T \in \Bi(m,3)$.
%\item A vector $v$ of size $m+2$ containing the ``coloring'',
%with the convention that a label in the place $i$ means that this is
%the label of the tree vertex between the cusp $i$ and $i+1$ (so the
%first one corresponds to the vertex between $-\infty$ and $0$ and so
%on).
%\item A g.F.s. of length attached to $T$,  
%\end{itemize}
The particular choice of the g.F.s. is not important, a different
choice will correspond to another subgroup of $\PSL_2(\Z)$ which is
conjugate (by a matrix in $\SL_2(\Z)$) to it. The elements of $G$ will
be called \emph{cusps}.
%\begin{defi}
%  A \emph{complete Farey symbol} is a tree diagram together with the choice of a g.F.s.
%\end{defi}
%In particular a complete Farey symbol determines a unique subgroup of
%$\PSL_2(\Z)$. 
\subsection{Some important algorithms} The purpose of the present
section is to gather together some algorithms to compute with Kulkarni
diagrams as well as with its attached subgroup $\Sub$. The most
important algorithms are that of computing generators for $\Sub$, and
an algorithm to determine whether an element $g \in \PSL_2(\ZZ)$ belongs to
$\Sub$ or not (crucial to compute the passport attached to a Kulkarni diagram).

To make statements more clear, we will use the following notation:
$\Kul$ denotes a Kulkarni diagram,
$G=\{-\infty,c_2,\ldots,c_{m+2},\infty\}$ denotes its g.F.s. and
$\Sub$ the subgroup attached to it. As proven in \cite[Theorem
6.1]{Kulkarni1991}, the group $\Sub$ has a set of generators
$\{\alpha_1,\ldots,\alpha_{m+2}\}$ indexed by the elements of $G$,
representing how the paths in the boundary of a special polygon are
glued together.

\begin{alg}[Generator of a cusp] Let $\Kul$ be a kulkarni diagram,
  $V_e=\{v_1,\dots,v_{m+2}\}$ be its set of (external) colored and ordered vertices 
  and let $G$ be its underlying g.F.s. Given a cusp $c_k \in G$, the
  following algorithm computes the generator corresponding to the cusp
  $c_k$:
\begin{algorithmic}[1]
  \STATE $a \leftarrow\num(c_{k+1})$.
  \STATE $b \leftarrow\den(c_{k+1})$.
  \STATE $c \leftarrow\num(c_{k})$.
  \STATE $d \leftarrow\den(c_k)$.
\IF {\text{$v_k$ is ``even''}}
\STATE $g\leftarrow\left(\begin{smallmatrix}ab+cd & -c^2-a^2\\ b^2+d^2 & -ab-cd\end{smallmatrix}\right)$.
\ELSIF {\text{$v_k$ is ``odd''}}
\STATE $g\leftarrow\left(\begin{smallmatrix}ab+cb+cd & -c^2-ac-a^2\\ b^2+bd+d^2 & -ab-ad-cd\end{smallmatrix}\right)$;
\ELSE
\STATE {$v_k$ \text{ is identified with some $v_j$}}
  \STATE $a' \leftarrow \num(c_{j+1})$.
  \STATE $b' \leftarrow \den(c_{j+1})$.
  \STATE $c' \leftarrow \num(c_{j})$.
  \STATE $d' \leftarrow \den(c_j)$.
\STATE $g\leftarrow \left(\begin{smallmatrix}a'b+c'd &-c'c-a'a\\ d'd+b'b &-ab'-cd'\end{smallmatrix}\right)$.
\ENDIF
\RETURN $g$.
\end{algorithmic}
\end{alg}

\begin{proof}
  See \cite[Theorem 6.1]{Kulkarni1991}.
\end{proof}

\begin{alg}[Cusps representatives]
  \label{alg:cusp-orbits}
  Let $\Kul$ be a Kulkarni diagram and let $V_e=\{v_1,\dots,v_{m+2}\}$ be its set
  of (external) colored and ordered vertices. The following algorithm
  computes the subgroup of {$\mathbb{S}_{m+2}$} whose orbits correspond to
  equivalent cusps:
  \begin{algorithmic}[1]
  \STATE $S\leftarrow \emptyset$.
  \FOR {$v_k \in V_e$}
  \IF {$v_k \text{ is ``even" or ``odd"}$}
  \STATE $S \leftarrow S\cup \{(k,k+1)\}$.
  \ELSE
  \STATE {$v_k$ \text{is identified with some $v_j$}}
  \STATE $S \leftarrow S \cup \{(k,j+1),(k+1,j)\}$
  \ENDIF
  \ENDFOR
  \RETURN $\langle S \rangle$.
\end{algorithmic}
\end{alg}

\begin{proof}
  Clear from the gluing of the border paths.
\end{proof}
Let $C_1,\ldots,C_t$ be the cusp representatives of the Kulkarni diagram
$\Kul$, i.e. each set $C_i$ is made of elements of $G$ which are in the
$i$-th orbit under the action of the group $S$ computed using
Algorithm~\ref{alg:cusp-orbits}.

\begin{problem}
  How to compute $W(C_i)$, the width of $C_i$?
\end{problem}

As before, let $G=\{c_{1}=-\infty,c_2,c_3,\ldots,c_{m+2},c_{m+3}=\infty\}$. Let $d \colon G \to \ZZ$ be the function
\begin{equation}
  \label{eq:d-defi}
  d(c_i) = |a_{i-1}b_{i+1}-a_{i+1}b_{i-1}|,
\end{equation}
where $a_i$ (resp. $b_i$) is the numerator (resp. denominator) of the
cusp $c_i$, and the indices are taken ``in a cyclic order''
(i.e. $c_{m+3} = c_{1}$).

\begin{alg}[Width of a cusp]
  \label{alg:width}
  Let $\Kul$ be a Kulkarni diagram, $V_e=\{v_1,\ldots,v_{m+2}\}$ be
  its colored and ordered external vertices and let
  $c_{i_1},\ldots,c_{i_r}$ be the cusps belonging to the class
  $C$.  The following algorithm computes the width of $C$:
\begin{algorithmic}[1]
  \STATE $W \leftarrow 0$
  \FOR {$j=1,\ldots,r$}
  \STATE $e \leftarrow 0$
  \IF {$v_{i_j-1} \text{ is ``odd"}$}
  \STATE $e \leftarrow e+1$
  \ENDIF
  \IF {$v_{i_j} \text{ is ``odd"}$}
  \STATE $e \leftarrow e+1$
  \ENDIF
  \STATE $W \leftarrow W + d(c_{i_j}) + e/2$
  \ENDFOR
%  \STATE $a \leftarrow\text{Numerator}(c_{N})$.
%  \STATE $b \leftarrow\text{Denominator}(c_{N})$.
%  \STATE $c \leftarrow\text{Numerator}(c_{2})$.
%  \STATE $d \leftarrow\text{Denominator}(c_2)$.
%  \STATE $w_{1}\leftarrow\lvert ad-bc\rvert$
%  \STATE $w_{N+1}\leftarrow0$
%  \IF{$x_{1} \text{ is ``odd"}$}
%  \STATE $w_{1}\leftarrow w_{1}+1/2$
%  \ENDIF
%  \IF{$x_{N} \text{ is ``odd"}$}
%  \STATE $w_{1}\leftarrow w_{1}+1/2$
%  \ENDIF
%  \FOR {$c_k\in G\setminus\{c_1,c_{N+1}\}$}
%  \STATE $a \leftarrow\text{Numerator}(c_{k+1})$.
%  \STATE $b \leftarrow\text{Denominator}(c_{k+1})$.
%  \STATE $c \leftarrow\text{Numerator}(c_{k-1})$.
%  \STATE $d \leftarrow\text{Denominator}(c_{k-1})$.
%  \STATE $w_k\leftarrow\lvert ad-bc\rvert$
%  \IF{$x_{k-1} \text{ is ``odd"}$}
%  \STATE $w_k\leftarrow w_k+1/2$
%  \ENDIF
%  \IF{$x_{k} \text{ is ``odd"}$}
%  \STATE $w_k\leftarrow w_k+1/2$
%  \ENDIF
%  \ENDFOR
%  \FOR{$i\in\{1,\dots,t\}$}
%  \STATE $W_i\leftarrow0$
%  \FOR{$c_k\in C_{i}$}
%  \STATE $W_i\leftarrow W_i+w_k$
%  \ENDFOR 
%  \ENDFOR
%  \RETURN $W_1,\dots,W_t$
\RETURN $W$
\end{algorithmic}
\end{alg}

\begin{proof}
  See the proposition of page 1079 in \cite{Kulkarni1991}.
\end{proof}

\subsection{The word problem}
The classical solution to determine whether an element
$g \in \PSL_2(\ZZ)$ belongs to $\Sub$ (the subgroup attached to the
Kulkarni diagram $\Kul$) is to produce a ``reduction algorithm''
modulo elements of $\Sub$.  Based on the article \cite{Lang1995}, in
\cite{Kurth2007} (Algorithm, page 13) the authors give such an
algorithm. The problem with the stated method is that it has some
``border'' conditions which are not clearly stated, so we take the
opportunity to add to it the missing details. If $a$ and $b$ are
integers, we denote by $\quo(a,b)$ the cusp $\frac{a}{b}$, with the
usual convention that $-\infty = \frac{-1}{0}$ and
$\infty = \frac{1}{0}$.

\begin{thm}[Reduction]
  \label{thm:reduction}
  Let $\Kul$ be a Kulkarni diagram, let $\Sub$ be its attached group, $G$
  be its g.F.s. and let $C$ denote the vector consisting of the
  coloring on $G$. Let $g \in \PSL_2(\ZZ)$ be any element.  The
  following algorithm (Reduction) gives a \emph{reduced} element in
  the coset $\Sub g$:
  \begin{algorithmic}[1]
\STATE $N \leftarrow |G|$
\IF {$g[2,1] = 0$}
\STATE $case \leftarrow 1$
\ELSIF {$g[2,2] = 0$}
\STATE $case \leftarrow 2$
\ELSE
\STATE $case \leftarrow \text{``generic"}$
\ENDIF
\IF {$case = ``generic"$}
\IF {$g[1,2]/g[2,2] > g[1,1]/g[2,1]$}
\STATE {$x \leftarrow Reduction\left(\left(\begin{smallmatrix}-g[1,2]&g[1,1]\\-g[2,2] &g[2,1]\end{smallmatrix}\right)\right)$}
\RETURN $\left(\begin{smallmatrix}-x[1,2] &x[1,1]\\ -x[2,2]&x[2,1]\end{smallmatrix}\right)$
\ENDIF
\ELSIF {$case = 1$}
\IF{$g[1,1] > 0$}
\IF{$g[1,2]/g[2,2] < 0$}
\STATE{$x \leftarrow Reduction \left(\begin{smallmatrix} g[1,2] & -g[1,1]\\g[2,2] & -g[2,1]\end{smallmatrix}\right)$}
\RETURN $\left(\begin{smallmatrix}-x[1,2] &x[1,1]\\ -x[2,2]&x[2,1]\end{smallmatrix}\right)$
\ENDIF
\ELSE
\IF{$g[1,2]/g[2,2] > G[N-1]$}
\STATE {$g \leftarrow -g$}
\ELSE
\STATE {$x \leftarrow Reduction\left(\left(\begin{smallmatrix}-g[1,2]&g[1,1]\\-g[2,2] &g[2,1]\end{smallmatrix}\right)\right)$}
\RETURN $\left(\begin{smallmatrix}-x[1,2] &x[1,1]\\ -x[2,2]&x[2,1]\end{smallmatrix}\right)$
\ENDIF
\ENDIF
\ELSIF {$case = 2$}
\IF{$g[1,2] > 0$}
\IF{$g[1,1]/g[2,1] > 0$}
\STATE {$x \leftarrow Reduction\left(\left(\begin{smallmatrix}-g[1,2]&g[1,1]\\-g[2,2] &g[2,1]\end{smallmatrix}\right)\right)$}
\RETURN $\left(\begin{smallmatrix}-x[1,2] &x[1,1]\\ -x[2,2]&x[2,1]\end{smallmatrix}\right)$
\ELSE
\STATE {$g \leftarrow -g$}
\ENDIF
\ELSE
\IF{$g[1,1]/g[2,1] > G[N-1]$}
\STATE{$x \leftarrow Reduction \left(\begin{smallmatrix} g[1,2] & -g[1,1]\\g[2,2] & -g[2,1]\end{smallmatrix}\right)$}
\RETURN $\left(\begin{smallmatrix}-x[1,2] &x[1,1]\\ -x[2,2]&x[2,1]\end{smallmatrix}\right)$
\ENDIF
\ENDIF
\ENDIF
\STATE $a \leftarrow \quo(g[1,2], g[2,2])$
\STATE $b \leftarrow \quo(g[1,1], g[2,1])$
\IF {$a \in G \text{ and } b \in G$}
\RETURN $g$
\ELSE
\IF {$case = 1$}
\STATE $c \leftarrow N$
\ELSIF {$case = 2$}
\STATE $c \leftarrow 2$
\ELSIF {$b \in G$}
\STATE $c \leftarrow \text{Position}(G, b)$
\ELSE
\STATE $l \leftarrow G \cup \{b\}$ 
\STATE $Sort(l)$
\STATE $c \leftarrow Position(l, b)$
\ENDIF
\ENDIF
\IF {$not \; C[c-1] = ``odd"$}
\STATE $y \leftarrow CuspGenerators(G[c-1])$
\ELSE
\STATE $m \leftarrow (\num(G[c-1])+\num(G[c]))/(\den(G[c-1])+\den(G[c]))$
\IF{$g[2,1] = 0 \text{ or }b > m$}
\STATE $y \leftarrow CuspGenerator(G[c-1])$
\ELSE
\STATE $y \leftarrow CuspGenerators(G[c-1])^{-1}$
\ENDIF
\ENDIF
\RETURN $Reduction(y*g)$
\end{algorithmic}
\end{thm}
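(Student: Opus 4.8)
The plan is to adapt the reduction scheme of Lang \cite{Lang1995} and Kurth--Lang \cite{Kurth2007}, reorganising the argument so that the boundary branches (the parts left imprecise in \cite{Kurth2007}) can be checked one at a time. First I fix terminology. Write $g\cdot 0=\quo(g[1,2],g[2,2])$ and $g\cdot\infty=\quo(g[1,1],g[2,1])$ for the two images under the M\"obius action; since the two columns of a matrix in $\SL_2(\ZZ)$ are coprime, these are already fractions in lowest terms. Call $g$ \emph{reduced} if both $g\cdot 0$ and $g\cdot\infty$ belong to $G$; this is exactly the predicate tested in the terminal branch. Recall that $0$ and $\infty$ are cusps of every special polygon, so $\{-\infty,0,\infty\}\subseteq G$, and recall from the ``Generator of a cusp'' algorithm (equivalently \cite[Theorem 6.1]{Kulkarni1991}) that the side--pairing transformation attached to the boundary line joining $c_{c-1}$ and $c_{c}$ lies in $\Sub$ and carries the exterior of that line onto the exterior of the matching line; for an \emph{even} line it is elliptic of order two, interchanging $c_{c-1}\leftrightarrow c_{c}$, for an \emph{odd} line elliptic of order three, and for a \emph{free side} it is the transformation taking that free side to its partner.

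There are then three statements to prove. (i) \emph{The output lies in $\Sub g$.} Every recursive call, regarded in $\PSL_2(\ZZ)$, replaces the current element by $g$ itself (the $g\leftarrow -g$ steps), or by $gS^{\pm1}$ undone by the post-multiplication by $S^{\mp1}$ shown in the return line, or by $y\cdot g$ with $y$ a cusp generator, hence $y\in\Sub$; an induction on recursion depth gives that the returned matrix equals $\gamma g$ for some $\gamma\in\Sub$. (ii) \emph{The output is reduced.} The terminal branch returns $g$ only when $g\cdot 0,g\cdot\infty\in G$; the branches of the form ``$x\leftarrow\mathrm{Reduction}(\cdots)$; return $x\,S^{\mp1}$'' preserve reducedness because right multiplication by $S$ merely interchanges $g\cdot 0$ and $g\cdot\infty$; and the branches ``return $\mathrm{Reduction}(y\cdot g)$'' are reduced by the inductive hypothesis. (iii) \emph{Termination}, which is the heart of the matter.

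For termination I would attach to $g$ the height $h(g)=\max\{\,|g[1,1]|+|g[2,1]|,\ |g[1,2]|+|g[2,2]|\,\}$, i.e.\ the larger of the sizes (numerator plus denominator, in lowest terms) of the two cusps $g\cdot\infty$ and $g\cdot 0$, and show that each step $g\mapsto y\cdot g$ with $y$ a cusp generator strictly decreases $h$. When $g$ is not reduced, the algorithm locates an index $c$ so that the offending cusp---the quantity called $b$ (or $a$) in the common tail---lies \emph{strictly between} two consecutive cusps $G[c-1]=p_1/q_1$ and $G[c]=p_2/q_2$ of the Farey symbol; from $|p_1q_2-p_2q_1|=1$ its denominator is $\ge q_1+q_2$, and applying the side--pairing $\alpha_{c-1}^{\pm1}$---which sends that strictly-between cusp to a point of $\partial\calH$ on the far side of the matching line---produces a cusp of strictly smaller size, as a one-line computation with the explicit matrix of $\alpha_{c-1}$ from the ``Generator of a cusp'' algorithm confirms. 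The auxiliary moves $g\mapsto -g$ and $g\mapsto gS^{\pm1}$ leave $h$ unchanged, and the sign tests and the comparisons against $G[N-1]$ guarding them prevent either from being applied twice in a row (for instance, after $g\mapsto gS$ one has $(gS)\cdot 0<(gS)\cdot\infty$, so the swap in the generic branch is not re-triggered); hence the recursion terminates.

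Finally I would discharge the boundary cases $g[2,1]=0$ and $g[2,2]=0$ explicitly. If $g[2,1]=0$ then $g\cdot\infty=\infty$ and $g$ is $\pm T^{k}$ for some $k\in\ZZ$, so $b=\infty\in G$ automatically; the only question is whether the integer $g\cdot 0$ is a cusp, and the test on the sign of $g[1,1]$ together with the comparison of $g[1,2]/g[2,2]$ with $G[N-1]$ serve to decide \emph{which copy} of the point at infinity the geodesic abuts---the left end $-\infty=G[1]$ or the right end $\infty=G[N]$---which is why the branch sets $c\leftarrow N$ and then uses the $\infty$-pairing attached to $G[N-1]$ (recorded as the numeric label $C[N-1]$), translating $g\cdot 0$ towards $0$ so that $h$ eventually drops; the case $g[2,2]=0$ is the mirror image with $c\leftarrow 2$. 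I would check in every sub-branch that the matrix passed to the recursive call has strictly smaller height, or else is routed into the generic or $\mathrm{case}=2$ branch, which does, so that no new cycle is created. The main obstacle is exactly this last bookkeeping: proving that in the degenerate cases the side of the matching line on which the offending cusp lies is computed correctly, so that the \emph{correct} generator---rather than its inverse, the choice being dictated by the test ``$C[c-1]=\text{odd}$'' together with the mediant $m=\bigl(\num G[c-1]+\num G[c]\bigr)/\bigl(\den G[c-1]+\den G[c]\bigr)$---is applied and the offending cusp is genuinely pulled toward $\calP$. Once that is settled, (i)--(iii) assemble into the asserted correctness of the Reduction algorithm.
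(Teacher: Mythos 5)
Your items (i) and (ii) are correct but they are the easy part (coset invariance under the moves $g\mapsto -g$, $g\mapsto gS^{\pm1}$ undone at return, $g\mapsto yg$ with $y\in\Sub$, and preservation of reducedness under column swaps); the paper does not even spell them out. What the paper's proof actually consists of is the part you treat loosely: it justifies the right-multiplication by $S$ (steps 10--12) and then enumerates the eight border sub-cases ($\alpha=\pm\infty$ in case 1, $\beta=\pm\infty$ in case 2), checking that each combination of the sign tests and the comparisons against $G[N-1]$ lands in the normalized configuration $\beta<\alpha$; having arrived there, it explicitly defers the generic reduction loop and its termination to \cite{Kurth2007} and \cite{Lang1995} (``the rest of the algorithm mimics the one presented in \cite{Kurth2007}''). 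Your boundary discussion gestures at the same normalization but does not verify the individual sub-cases, which is precisely the content the theorem adds over the cited literature.

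The genuine gap is your termination argument (iii). The claim that every step $g\mapsto y\cdot g$ with $y$ a cusp generator strictly decreases $h(g)=\max\{|g[1,1]|+|g[2,1]|,\ |g[1,2]|+|g[2,2]|\}$ is not ``a one-line computation''; as stated it is false. For a \emph{free} side the pairing transformation carries the side to a partner side elsewhere in the polygon, and the image of a cusp lying strictly between the side's endpoints need not have smaller height: for instance, if the free side $(0,\tfrac12)$ is paired with $(3,4)$, the cusp-generator formula gives $y=\left(\begin{smallmatrix}11&-4\\3&-1\end{smallmatrix}\right)$, which sends the cusp $\tfrac14$ (height $1+4=5$) to $5$ (height $6$); moreover $y$ acts on \emph{both} columns, so the non-offending column (possibly already a vertex of $G$ of small height) can also grow, and $h$ need not drop even when the offending cusp improves. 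The quantity that actually decreases in the algorithms of \cite{Lang1995} and \cite{Kurth2007} is a distance in the tessellation by $\Sub$-translates of the special polygon (this is what the paper means by ``the idea behind the algorithm is to shorten a distance''), not the size of the matrix entries. So to complete your proof you must either establish monotonicity of a correct measure of this combinatorial type (taking into account the choice between $y$ and $y^{-1}$ made via the mediant test in the odd case), or, as the paper does, reduce to the cited termination proofs after verifying the border normalization case by case.
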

\begin{proof}
  Let
  $g = \left(\begin{smallmatrix} a & b\\ c &
      d\end{smallmatrix}\right)$, and let $\alpha = \quo(a,c)$,
  $\beta=\quo(b,d)$ be the cusps corresponding to the first and the
  second column. The boundary issues appear when either $\alpha$ or
  $\beta$ are the infinity cusp (depending also on whether they
  correspond to the $-\infty$ or the $\infty$ cusp). The first case
  corresponds to $\alpha = \infty$, the second case to
  $\beta = \infty$ and the remaining ones to the ``generic'' case. In
  \cite{Kurth2007} the authors start assuming that $\beta < \alpha$
  (following their main reference \cite{Lang1995}). If we multiply the
  matrix $g$ on the right by
  $S = \left(\begin{smallmatrix} 0 & 1\\-1 &
      0\end{smallmatrix}\right)$ has the effect of interchanging
  $\alpha \leftrightarrow \beta$, so in case $\alpha < \beta$ we
  reduce the matrix $gS$ and multiply the reduced matrix by $S$ to the
  right. Multiplication by $S$ sends
  \[
\begin{pmatrix} a & b\\ c & d \end{pmatrix} \to \begin{pmatrix} -b & a\\ -d & c\end{pmatrix}.
\]
This justifies the steps $10-12$ of the algorithm. In the non-generic
cases, this idea might not be enough to get the ``right'' reduction
step (as explained in \cite{Lang1995}, the idea behind the algorithm
is to shorten a distance). Suppose that $|\alpha|=\infty$ (so we are in the first case), then the border condition is the following:
\begin{itemize}
\item If $\alpha = \infty$ and $\beta$ is positive, we reduce $g$.
  
\item If $\alpha = \infty$ and $\beta$ is negative, we reduce $-gS$ (i.e. instead of considering the pair $(\infty,\beta)$ we work with $(\beta,-\infty)$).
  
\item If $\alpha = -\infty$ and $\beta$ is not larger than all elements of $G$, we reduce $gS$ (i.e. $(\beta,-\infty)$).
  
\item If $\alpha = -\infty$ and $\beta$ is larger than all elements of $G$, we reduce $-g$ (i.e. $(\infty,\beta)$).
\end{itemize}

Similarly, when $|\beta| = \infty$ (the second case), the border
condition is the following:
\begin{itemize}
\item If $\beta = -\infty$ and $\alpha$ is no larger than all elements of $G$, we reduce $g$.
  
\item If $\beta = -\infty$ and $\alpha$ is larger than all elements of $G$, we reduce $-gS$ (i.e. we replace the pair $(\alpha,-\infty)$ by $(\infty,\alpha)$).
  
\item If $\beta = \infty$ and $\alpha<0$, we reduce $-g$ (replacing $(\alpha,\infty)$ by $(\alpha,-\infty)$).
  
\item If $\beta = \infty$ and $\alpha >0$, we reduce $gS$ (replacing $(\alpha,\infty)$ by $(\infty,\alpha)$).
\end{itemize}
In all the above cases, we end up in a situation where
$\beta < \alpha$. The rest of the algorithm mimics the one presented
in \cite{Kurth2007}.
\end{proof}

Let $g \in \PSL_2(\ZZ)$ and let
$g'=\left(\begin{smallmatrix}a & b\\c & d\end{smallmatrix}\right)$ be
its reduction (i.e. the output of the last algorithm).
\begin{thm}
  \label{thm:isongroup}
  With the previous notation, an element $g \in \PSL_2(\ZZ)$ belongs
  to $\Sub$ if and only if one of the following is true:
\begin{enumerate}
\item $g' = \pm \left(\begin{smallmatrix} 1 & 0\\ 0 & 1\end{smallmatrix}\right)$,
  
\item $(\frac{b}{d},\frac{a}{c})$ is a free side paired with $(0,\infty)$,
  
\item
  $g'= \pm \left(\begin{smallmatrix} 0 & -1\\ 1 &
      0\end{smallmatrix}\right)$, and $0$ and $\infty$ are adjacent
  vertices with an even pairing between them.
\end{enumerate}
\end{thm}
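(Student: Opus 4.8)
The plan is to characterize membership in $\Sub$ by analyzing what a reduced element looks like, using the fact (Theorem~\ref{thm:reduction}) that the Reduction algorithm outputs an element of the coset $\Sub g$ whose associated cusps $\beta = \quo(b,d)$ and $\alpha = \quo(a,c)$ are \emph{both} members of $G$ with $\beta < \alpha$. So $g \in \Sub$ if and only if the reduced element $g'$ lies in $\Sub$, and the question becomes: which reduced matrices belong to $\Sub$? Since $\Sub$ has the set of side-pairing generators $\{\alpha_1,\ldots,\alpha_{m+2}\}$ from \cite[Theorem 6.1]{Kulkarni1991}, and a reduced element stabilizes (setwise, as an ordered pair) a pair of consecutive-or-equal cusps of the special polygon, I would first argue that a reduced $g'$ lies in $\Sub$ precisely when it is (up to sign) either the identity, or one of the side-pairing transformations associated to a free side $(0,\infty)$, or the transformation $S$ when the edge between $0$ and $\infty$ is an even edge paired with itself.

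The key steps, in order: (1) Recall that the special polygon has $0$ and $\infty$ among its vertices (property (6) of a special polygon) and that the reduction procedure normalizes $g$ so that $\beta = \quo(b,d)$ and $\alpha = \quo(a,c)$ are consecutive cusps $c_k, c_{k+1}$ of $G$ (or $g'$ is already trivial). (2) Observe that an element of $\PSL_2(\Z)$ sending the ordered pair of cusps $(c_1, c_2) = (-\infty, 0)$ — equivalently the pair $(0,\infty)$ after the sign/orientation bookkeeping — to another ordered pair of consecutive cusps $(\beta,\alpha)$ lies in $\Sub$ if and only if that pair, together with the pairing data, is glued to the side at infinity; this is exactly the content of how side-pairing generators act on the boundary of a special polygon. (3) Enumerate the possibilities: if $g'$ maps the infinity side to itself with the same orientation, then $g' = \pm I$; if it maps it to itself with reversed orientation, then the edge between $0$ and $\infty$ is a single even edge paired with itself and $g' = \pm S$; if it maps the infinity side to a genuinely different side, that side must be a free side paired to $(0,\infty)$, and then $(\frac{b}{d},\frac{a}{c})$ is precisely that free side. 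Conversely each of (1)–(3) visibly produces an element of $\Sub$ (the identity trivially, a free-side pairing generator by \cite[Theorem 6.1]{Kulkarni1991}, and $S$ as the even-edge self-pairing generator).

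The main obstacle I expect is step (2)–(3): making precise the claim that a \emph{reduced} element lying in $\Sub$ must actually be one of the side-pairing generators attached to the side at infinity (or the identity), rather than some longer word in the generators. This requires knowing that the Reduction algorithm genuinely produces a fundamental-domain-reduced coset representative — i.e. that the reduced element translates the ``base'' side $(0,\infty)$ of the special polygon to a side on the boundary, so that if it also lies in $\Sub$ it must be a single side-pairing move. I would lean on the correctness of Theorem~\ref{thm:reduction} (which guarantees $\beta,\alpha \in G$) together with the geometric fact that $\Sub$ acts freely on the tessellation by translates of $\calD$, so that an element of $\Sub$ fixing the combinatorial type of the base tile setwise and sending its distinguished side into the boundary is determined by which boundary side it lands on. The even-edge case (item 3) needs the extra observation that $S$ fixes $i$ and swaps $0 \leftrightarrow \infty$, reversing the orientation of that even edge, which is exactly the situation where $0$ and $\infty$ are joined by an even edge paired with itself — I would verify directly that $S \cdot \calF$ together with $\calF$ fills the tile $\calD$ on the correct side. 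The remaining verifications (that the listed matrices have the stated forms, that the converse directions hold) are routine given the explicit generator formulas in the preceding Algorithm.
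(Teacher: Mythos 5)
First, a point of comparison: the paper does not actually prove this theorem — its ``proof'' is the one-line citation to \cite[Theorem 3.2]{Lang1995} — so your proposal attempts strictly more than the text does. Your overall plan (reduce via Theorem~\ref{thm:reduction}, then decide which reduced matrices lie in $\Sub$ by viewing the special polygon as a fundamental domain with independent side-pairing generators) is the right one and is essentially the route of Lang's paper.

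However, as written the argument has a genuine gap at exactly the step you flag, for two concrete reasons. (a) Step (1) asserts that Theorem~\ref{thm:reduction} guarantees that the column cusps $\beta=\frac{b}{d}$ and $\alpha=\frac{a}{c}$ of the reduced matrix are \emph{consecutive} members of $G$. The algorithm's termination condition only guarantees $\alpha,\beta\in G$; two cusps of a g.F.s.\ can be unimodular without being adjacent (e.g.\ $0$ and $1$ in $\{-\infty,0,\tfrac12,1,\infty\}$), so consecutiveness is not an input you may assume — it is part of what must be proved when $g'\in\Sub$. (b) The justification you offer for the crux (``$\Sub$ acts freely on the tessellation by translates of $\calD$, so an element of $\Sub$ sending its distinguished side into the boundary is determined by which boundary side it lands on'') presupposes the key point, namely that the image of the side $(0,\infty)$ under a reduced $g'\in\Sub\setminus\{\pm 1\}$ actually lands on the boundary of the special polygon $P$, rather than being a chord through the interior of $P$, or an arc whose endpoints merely happen to be vertices of $P$; also $\Sub$ generally contains elliptic elements, so it does not act freely on $\calH$ — the usable fact is that it acts freely on tiles because $\calD$ is a fundamental domain for $\PSL_2(\ZZ)$. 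The missing argument can be supplied, but it needs the convexity of $P$ and the fundamental-domain property in an essential way: for $\gamma\in\Sub\setminus\{1\}$ with $\gamma(0),\gamma(\infty)\in G$, the geodesic joining $\gamma(0)$ and $\gamma(\infty)$ lies in the closure of $P$ by convexity and cannot meet the interior of $P$ (else $\operatorname{int}(P)\cap\operatorname{int}(\gamma P)\neq\emptyset$), hence lies on $\partial P$; this forces $\gamma(0),\gamma(\infty)$ to be adjacent cusps joined by an even line, forces $\gamma P$ to be the tile adjacent to $P$ along that line, and only then identifies $\gamma$ with the side-pairing transformation of that side — self-paired even edges giving $\pm S$ (case (3)), paired free sides giving case (2), and a convexity argument of the same kind ruling out any nontrivial reduced $\gamma\in\Sub$ when $(0,\infty)$ is an odd side, leaving only $\pm I$. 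Without an argument of this nature (plus the routine but necessary check of the orientation convention in case (2), i.e.\ that the free-side generator sends $0\mapsto\frac{b}{d}$, $\infty\mapsto\frac{a}{c}$ and not the reverse), the enumeration in your step (3) does not get started, so the proposal is incomplete even though its strategy matches the standard proof.
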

\begin{proof}
  See \cite[Theorem 3.2]{Lang1995}.
\end{proof}
The last two results provide a solution to the word problem, namely
determining whether an element $\alpha \in \PSL_2(\ZZ)$ belongs to $\Sub$ or not. We
end this section with an algorithm to compute coset representatives
for $\Sub \backslash \PSL_2(\ZZ)$.

\begin{prop}[Coset representatives]
\label{prop:representatives}
  Let $\Kul$ be a Kulkarni diagram, let $\Sub$ be the attached group and
  $G$ be its g.F.s. Let $C$ be the vector made of the
  coloring of $G$.  The following algorithm gives a complete set of
  representatives for $\Sub \backslash \PSL_2(\ZZ)$:
  \begin{algorithmic}[1]
\STATE $N \leftarrow |G|$
\STATE $T \leftarrow \left(\begin{smallmatrix}1 & 1\\0 & 1\end{smallmatrix}\right)$
\STATE $reps \leftarrow \emptyset$
\FOR {$i =1,\ldots,N-1$}
\IF{$i = 1$}
\STATE $w \leftarrow |\num(G[N-1])*\den(G[2])-\den(G[N-1])*\num(G[2])|$
\ELSE
\STATE $w \leftarrow |\num(G[i-1])*\den(G[i+1])-\den(G[i-1])*\num(G[i+1])|$
\ENDIF
\STATE $A \leftarrow \left(\begin{smallmatrix}-\num(G[i])& \num(G[i+1])\\-\den(G[i])& \den(G[i+1])\end{smallmatrix}\right)$
\IF {$C[i] = ``odd"$}
\STATE $reps \leftarrow reps\cup \{A *T^{-1}\}$
\ENDIF
\FOR {$j \in [0,\ldots,w-1]$}
\STATE $reps \leftarrow reps \cup \{A* T^j\}$
\ENDFOR
\ENDFOR
\RETURN reps
\end{algorithmic}
\end{prop}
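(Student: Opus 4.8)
The plan is to verify that the listed matrices form a complete and irredundant set of right coset representatives for $\Sub \backslash \PSL_2(\ZZ)$ by counting them and checking they represent distinct cosets. First I would recall that $[\PSL_2(\ZZ):\Sub] = 3m+b = \sum_{i} W(C_i)$, the sum of the widths of the cusp classes, where the latter is a standard fact about special polygons (it is also recovered by Algorithm~\ref{alg:width}). So the count produced by the algorithm is $\sum_{i=1}^{N-1} w_i$ plus one extra element for each $i$ with $C[i] = \text{``odd''}$; using the definition of the function $d$ from~\eqref{eq:d-defi} and the width formula in Algorithm~\ref{alg:width}, this equals exactly $\sum_C W(C)$, hence the index. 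Thus it suffices to show the produced matrices lie in pairwise distinct cosets.

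The geometric idea I would use is the standard one: a fundamental domain for $\Sub$ is the special polygon $\calP$, which is tiled by $[\PSL_2(\ZZ):\Sub]$ copies of the standard fundamental domain $\TT$ for $\PSL_2(\ZZ)$ (equivalently, $\calD$), and a system of right coset representatives is exactly a set of matrices $\{g_j\}$ such that $\{g_j^{-1}\cdot\TT\}$ are the tiles of $\calP$. Concretely, between two consecutive cusps $c_i = \frac{\num(G[i])}{\den(G[i])}$ and $c_{i+1}$ there is a ``fan'' of copies of $\TT$, and the matrix $A = \left(\begin{smallmatrix}-\num(G[i])& \num(G[i+1])\\-\den(G[i])& \den(G[i+1])\end{smallmatrix}\right)$ sends the standard cusps $0,\infty$ to $c_{i+1}, c_i$. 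Then I would argue that $A T^j$ for $j = 0,\dots,w_i-1$ enumerate the translates of $\TT$ in the ``triangle'' between $c_i$ and $c_{i+1}$ (the number $w_i = |ad-bc|$ counting precisely these translates, as in Proposition~\ref{prop:g.F.s.distance}), and that when the side between $c_i$ and $c_{i+1}$ is odd one needs in addition the translate corresponding to $A T^{-1}$, because an odd edge carries two copies of $\TT$ meeting at the odd vertex rather than one. Summing over $i$ runs through all tiles of $\calP$ exactly once.

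The cleanest way to package the distinctness is to invoke the solution of the word problem already established: two matrices $g, g'$ in the list represent the same coset iff $g (g')^{-1} \in \Sub$, which by Theorems~\ref{thm:reduction} and~\ref{thm:isongroup} can be checked, and one verifies it fails for any two distinct entries of the list because their associated pairs of cusps are distinct pairs of adjacent cusps of $G$ (or distinct among the $w_i$ ``slots'' of a single fan, distinguished by the power of $T$, which shifts the cusp $A T^j \cdot \infty$). Alternatively, and perhaps more honestly, I would just cite that this construction is the classical one for coset representatives from a Farey symbol --- it appears in essentially this form in \cite{Kulkarni1991} and \cite{Kurth2007} --- and our only contribution is the explicit bookkeeping with the $\frac{-1}{0}$ versus $\frac{1}{0}$ convention and the special first-index case $i=1$ where the ``previous'' cusp wraps around to $G[N-1]$.

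The main obstacle I expect is the odd-vertex correction term. One must be careful that adding $A T^{-1}$ exactly once for each odd side (and not, say, $A T^{w_i}$ or some reflection) gives the right tile and does not double-count with the fan of the neighbouring side; this requires tracking which of the two $\TT$-copies straddling the odd edge has already been accounted for by the adjacent fan. Getting the orientation conventions consistent here --- i.e. checking against Remark~\ref{remark:orientation} that the ``counterclockwise'' choice matches $A T^{-1}$ rather than $A T$ --- is the delicate point; the rest is routine once the tiling picture is set up.
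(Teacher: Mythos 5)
Your skeleton --- coset representatives correspond to the $\PSL_2(\ZZ)$-tiles of the special polygon $P$, enumerated cusp by cusp, with a correction at odd sides --- is the same as the paper's, but the step carrying all the content is misstated and then deferred. In the algorithm, $w$ is $|\num(G[i-1])\den(G[i+1])-\den(G[i-1])\num(G[i+1])|$, i.e.\ the quantity $d(c_i)$ of \eqref{eq:d-defi} built from the two \emph{neighbours} of $c_i$; it is not the cross product of the consecutive cusps $c_i,c_{i+1}$, which is always $1$ in a g.F.s. Accordingly the matrices $AT^j$, $0\le j\le w-1$, are the tiles of the fan around the single cusp $c_i$ (note $AT^j\cdot\infty=c_i$ for every $j$, so the power of $T$ does not ``shift the cusp'' as your distinctness remark claims), bounded by the sides towards $c_{i-1}$ and towards $c_{i+1}$; they are not tiles ``between $c_i$ and $c_{i+1}$''. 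Because of this, the argument that distinct list entries lie in distinct cosets since ``their associated pairs of cusps are distinct'' does not work as stated, and checking membership via Theorems~\ref{thm:reduction} and~\ref{thm:isongroup} cannot be carried out symbolically for an arbitrary Kulkarni diagram, so distinctness is not actually established.

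What has to be proven --- and what the paper's proof does --- is the exact tile enumeration: after applying $A^{-1}$, which sends $c_i\mapsto\infty$, $c_{i+1}\mapsto 0$ and $c_{i-1}\mapsto w$, one checks in the four parity cases for the two sides meeting at $c_i$ that the translates of $\FF$ of $A^{-1}P$ touching $\infty$ are $T^j(\FF\cup T\iota(\FF))$ for $0\le j\le w-1$, together with the extra half tile $T^{-1}(T\iota(\FF))$ exactly when the side from $c_i$ to $c_{i+1}$ is odd, while the half tile $T^{w}\FF$ appearing when the side towards $c_{i-1}$ is odd is deliberately \emph{not} listed because it is picked up by the fan of the neighbouring cusp. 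This is precisely the odd-vertex bookkeeping you label ``the main obstacle'' and leave out; without it neither completeness nor the absence of repetitions is proved, whereas once it is done distinctness is automatic (distinct tiles of a fundamental domain for $\Sub$ lie in distinct cosets --- no pairwise membership test is needed). Citing \cite{Kurth2007} does not close the gap either: the paper's point is that the formula there has a sign error in $A$ and unstated boundary conventions, which this proposition corrects. Two smaller slips: representatives $g$ of $\Sub\backslash\PSL_2(\ZZ)$ are those with $g\cdot\TT\subset P$, not $g^{-1}\cdot\TT\subset P$; and the counting identity (index equals the sum of cusp widths) that your first paragraph relies on is standard but is not needed at all once the tiles are enumerated exactly.
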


\begin{proof}
  The result is stated in $\S 5.3$ of \cite{Kurth2007}, although there
  is a minor mistake on it (which is why we present a proof). Let $\FF$ be the hyperbolic
  triangle with vertices $\rho$, $i$, $\infty$ as in
  Figure~\ref{tria} and let $P$ be the special polygon attached to $\Tree$
  with cusps the g.F.s. of $\Kul$.  If $c_i=\frac{a_i}{b_i}$ and
  $c_{i+1}=\frac{a_{i+1}}{b_{i+1}}$ are two consecutive vertices of
  the special polygon attached to $\Kul$ (so they are two consecutive
  cusps), then the matrix
  \[
    A=\begin{pmatrix} -a_i & a_{i+1}\\
      -b_i & b_{i+1}\end{pmatrix}
  \]
  sends the line joining $\infty$ and $0$ to the line joining $c_i$ to
  $c_{i+1}$ (note that the sign in the first column is missing in
  \cite{Kurth2007}). The image under $A^{-1}$ of $P$ then contains the
  cusp $A^{-1}\cdot c_{i-1}$ (a positive integer) next to $\infty$ next to the cusp $0$. If
  $c_{i-1} = \frac{a_{i-1}}{b_{i-1}}$, then
  \[
    A^{-1}\cdot c_{i-1} = w =|a_{i-1}b_{i+1}-b_{i-1}a_{i+1}|.
  \]
  Suppose that the line joining $c_i$ and $c_{i+1}$ is even,
  then the translates of $\FF$ appearing in $A^{-1}P$ are either of the form
\begin{figure}[t]
\begin{subfigure}{.3\textwidth}
   \centering
   \begin{tikzpicture}[scale=1]
   \draw[Dandelion] plot [domain=0:0.5] (\x,{sqrt(1-\x^2)}) plot [domain=0.5:1] (\x,{sqrt(1-(\x-1)^2)}) plot [domain=2:2.5] (\x,{sqrt(1-(\x-2)^2)}) plot [domain=2.5:3] (\x,{sqrt(1-(\x-3)^2)});
   \draw[blue] plot [domain=sqrt(3)/2:2] (0.5,\x) plot [domain=sqrt(3)/2:2] (2.5,\x);
   \draw[white] plot [domain=3:3.5] (\x,{sqrt(1-(\x-4)^2)});
   \draw[red] (0,2) -- (0,0);
   \draw[red] (1,2) -- (1,1);
   \draw[red] (2,2) -- (2,1);
   \draw[red] (3,2) to (3,0);
  \draw (0.25,1.5) node[font=\tiny]{$\mathcal{F}$}; 
  \draw (0.75,1.5) node[rotate=-90,font=\tiny]{$T(\mathcal{F})$};
  \draw (2.25,1.5) node[rotate=-90,font=\tiny]{$T^j(\mathcal{F})$};
  \draw (1.5,1.5) node[]{$\cdots$};
  \end{tikzpicture}
  \caption{Both even.}
  \label{fig:case1}
\end{subfigure}\hspace{-4em}%\\[1ex]
\begin{subfigure}{.3\textwidth}
   \centering
   \begin{tikzpicture}[scale=1]
   \draw[Dandelion] plot [domain=0:0.5] (\x,{sqrt(1-\x^2)}) plot [domain=0.5:1] (\x,{sqrt(1-(\x-1)^2)}) plot [domain=2:2.5] (\x,{sqrt(1-(\x-2)^2)}) plot [domain=2.5:3.5] (\x,{sqrt(1-(\x-3)^2)});
   \draw[blue] plot [domain=sqrt(3)/2:2] (0.5,\x) plot [domain=sqrt(3)/2:2] (2.5,\x) plot [domain=sqrt(3)/2:2] (3.5,\x) plot [domain=3:3.5] (\x,{sqrt(1-(\x-4)^2)}); 
   \draw[red] (0,2) -- (0,0);
   \draw[red] (1,2) -- (1,1);
   \draw[red] (2,2) -- (2,1);
   \draw[red] (3,2) to (3,1);
  \draw (0.25,1.5) node[font=\tiny]{$\mathcal{F}$};
  \draw (1.5,1.5) node[]{$\cdots$};
  \end{tikzpicture}
  \caption{Left even, right odd.}
  \label{fig:case2}
\end{subfigure}\hspace{-3em}%\\[1ex]
\begin{subfigure}{.3\textwidth}
   \centering
   \begin{tikzpicture}[scale=1]
   \draw[Dandelion] plot [domain=0.5:1] (\x,{sqrt(1-(\x-1)^2)}) plot [domain=2:2.5] (\x,{sqrt(1-(\x-2)^2)}) plot [domain=2.5:3] (\x,{sqrt(1-(\x-3)^2)});
   \draw[blue] plot [domain=sqrt(3)/2:2] (0.5,\x) plot [domain=sqrt(3)/2:2] (2.5,\x) plot [domain=0.5:1] (\x,{sqrt(1-\x^2)}); 
   \draw[red] (1,2) -- (1,1);
   \draw[red] (2,2) -- (2,1);
   \draw[red] (3,2) to (3,0);
  \draw (0.75,1.5) node[font=\tiny]{$\iota\mathcal{F}$};
  \draw (1.5,1.5) node[]{$\cdots$};
  \end{tikzpicture}
  \caption{Left odd, right even.}
\label{fig:case3}
\end{subfigure}\hspace{-3em}%\\[1ex]
\begin{subfigure}{.3\textwidth}
   \centering
   \begin{tikzpicture}[scale=1]
   \draw[Dandelion] plot [domain=0.5:1] (\x,{sqrt(1-(\x-1)^2)}) plot [domain=2:2.5] (\x,{sqrt(1-(\x-2)^2)}) plot [domain=2.5:3.5] (\x,{sqrt(1-(\x-3)^2)});
   \draw[blue] plot [domain=sqrt(3)/2:2] (0.5,\x) plot [domain=sqrt(3)/2:2] (2.5,\x) plot [domain=sqrt(3)/2:2] (3.5,\x) plot [domain=0.5:1] (\x,{sqrt(1-\x^2)}) plot [domain=3:3.5] (\x,{sqrt(1-(\x-4)^2)});
   \draw[red] (1,2) -- (1,1);
   \draw[red] (2,2) -- (2,1);
   \draw[red] (3,2) to (3,1); 
  \draw (0.75,1.5) node[font=\tiny,rotate=-90]{$\iota(\mathcal{F})$};
  \draw (3.25,1.5) node[rotate=-90,font=\tiny]{$T^w(\iota\mathcal{F})$};
  \draw (1.5,1.5) node[]{$\cdots$};
  \end{tikzpicture}
  \caption{Both odd.}
  \label{fig:case4}
\end{subfigure}
\caption{Representatives near the infinity cusp.}
\end{figure}
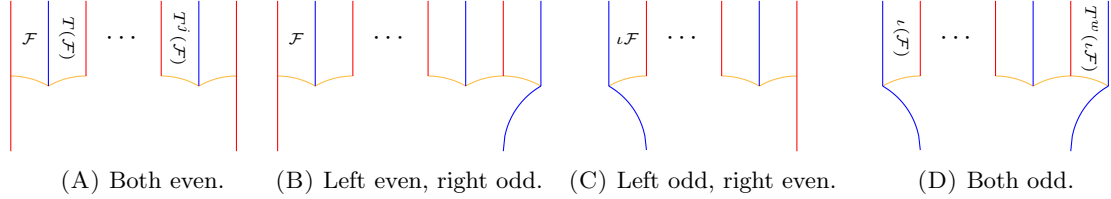  
\begin{itemize}
  \item $T^j(\FF \cup T\iota(\FF))$ if the line joining $c_{i}$ to
    $c_{i-1}$ is also even (so they are translates of a fundamental
    domain for $\PSL_2(\ZZ)$), where $0 \le j \le w-1$ (see Figure~\ref{fig:case1}),
    
  \item $T^j(\FF \cup T\iota(\FF))$, where $0 \le j \le w-1$ together
    with $T^w \FF$ (see Figure~\ref{fig:case2}) if the line joining $c_{i}$ to
    $c_{i-1}$ is odd. Note that the ``other'' part of the fundamental
    domain will appear while considering the cusp $c_{i-1}$, so we do
    not add it to the list to avoid repetitions.
  \end{itemize}
  In this case it is clear that the hyperbolic triangles in the
  special polygon $P$ containing the infinity point are 
  the translates by $A*T^j$ of $\FF$ (together with $T\iota(\FF)$) for
  $0 \le j \le w-1$.

  Suppose otherwise that the line joining $c_i$ and $c_{i+1}$ is odd. Then the translates
  of $\FF$ appearing in $A^{-1}P$ belong to one of the following two cases:

  \begin{itemize}
  \item $T^{-1}(T \iota(\FF))$ together with
    $T^j(\FF \cup T\iota(\FF))$, where $0 \le j \le w-1$ (see
    Figure~\ref{fig:case3}) if the line joining $c_i$ to $c_{i-1}$ is
    even. Note that we must add the first element as it was excluded
    from the opposite situation (namely left side even and right one
    odd).
  \item $T^{-1}(T \iota(\FF))$ together with $T^w\FF$ and
    $T^j(\FF \cup T\iota(\FF))$ for $0 \le j \le w-1$ (see
    Figure~\ref{fig:case4}). Once again, to avoid repetitions, we do
    not add the representative $T^w\FF$.
  \end{itemize}
  In this second case the hyperbolic triangles in the special polygon
  $P$ containing the infinity point are the translates of
  $A*T^j$ for $-1 \le j \le w-1$ of $\FF$ (together with
  $T\iota(\FF)$).
\end{proof}

\subsection{Relation with geometry}
From a finite index subgroup $\Sub$ of $\PSL_2(\ZZ)$ one can construct
the algebraic curve given by the quotient
$\calC_\Sub:=\Sub \backslash \Poincare^\ast$. Most of the geometric
invariants of $\calC_\Sub$ can be read from the Kulkarni diagram
$\Kul$. Define the following quantities:
\begin{itemize}
\item $e_2 = |\{\text{``red" vertices fixed by the involution in }R\}| = |R_0|$.
  
\item $e_3 = |\{\text{``blue" vertices in }V_e\}|$.
  
\item $t =$ the number of orbits of the group obtained as the output of
  Algorithm~\ref{alg:cusp-orbits}.
  
\item $d = [\PSL_2(\Z):\Sub]$.
\end{itemize}
Then in \cite[\S7]{Kulkarni1991} it is proven that the curve
$\calC_\Sub$ satisfies the following properties:
\begin{enumerate}
\item the number of branch points of $\Poincare \to \calC_\Sub$ of order $2$ equals $e_2$,
\item the number of branch points of $\Poincare \to \calC_\Sub$ of order $3$ equals $e_3$,
\item the number of inequivalent cusps of $\calC_\Sub$ equals $t$, and
\item the following formula holds:
\[
2g(\calC_\Sub) + t-1 = \frac{1}{2}|\{\text{``red" vertices non-fixed by the involution in }R\}|=f,
\]
where $g(\calC_\Sub)$ is the genus of $\calC_\Sub$.
\end{enumerate}
In particular, the colored set $V_e$ contains enough information to
compute $g(\calC_\Sub)$.

\section{Passports and equivalence classes of subgroups}
\label{sec:passports}

Let $G$ a group and $H$ be a subgroup of index $d$. Let
$X=\{g_1,\ldots,g_d\}$ be a set of left coset representatives for
$G /H$. Without loss of generality, we can assume that $g_1 \in H$. Then
\[
  G=\bigsqcup_{i=1}^dg_{i}H.
\]
There is a natural well known homomorphism of groups
\begin{align*}
	\theta_{H}\colon G&\longrightarrow \mathbb{S}_{X}\simeq \mathbb{S}_{d}\\
	g&\longmapsto\sigma_{g},
\end{align*}
where $\sigma_g$ is the automorphism of $X$ satisfying that
$g\cdot g_{i}H=\sigma_{g}(g_i)\cdot H$ for all $i$. The morphism
$\theta_H$ has important properties (see for example \cite[Theorems 5.3.1 and 5.3.2]{MR0414669}), namely:
\begin{itemize}
\item It determines the subgroup $H$ (with our choice  it is
  precisely the group of elements in $G$ fixing $g_1$).
  
\item Any conjugate of $H$ by an element of $G$ equals the group of
  elements in $G$ fixing $g_i$ for some $1 \le i \le d$.
  
\item If we denote by $H^{N}$ the biggest normal subgroup of $G$
  contained in $H$ (also known as the \emph{normal core of $H$ in $G$}),
  then it coincides with $\operatorname{Ker}(\theta_H)$.
  
\item If we denote by
  $\Sigma=\operatorname{Im}\theta_{H}\leq\mathbb{S}_{d}$ (that is
  isomorphic to $G/H^N$) then $\Sigma$ acts transitively on $\SS_d$.
\end{itemize}

Recall the following well known result on groups.

\begin{prop}
  Let $G$ be a group, and let $H, K$ be two index $d$ subgroups of
  $G$. $H$ is conjugated to $K$ by an element of $G$ if and only if there exists
  $\sigma\in\SS_{d}$ such that
  $\theta_{H}=\sigma\theta_{K}\sigma^{-1}$.
\end{prop}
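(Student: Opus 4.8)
The plan is to obtain the intertwining permutation $\sigma$ from an isomorphism of (transitive) $G$-sets between $G/H$ and $G/K$, and conversely to extract a conjugating element of $G$ from such a $\sigma$. I would first fix left coset representatives $\{g_1,\dots,g_d\}$ for $G/H$ and $\{g_1',\dots,g_d'\}$ for $G/K$ with $g_1\in H$ and $g_1'\in K$, as in the discussion above, and identify both $G/H$ and $G/K$ with $\{1,\dots,d\}$ via $g_iH\mapsto i$ and $g_i'K\mapsto i$. With these identifications $\theta_H,\theta_K\colon G\to\SS_d$ are the left-multiplication permutation representations, and, by the first listed property of $\theta$, $H$ is exactly the stabiliser of the point $1$ under $\theta_H$ and $K$ the stabiliser of $1$ under $\theta_K$.

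$(\Rightarrow)$ Assume $H=xKx^{-1}$ for some $x\in G$. The map $\phi\colon G/H\to G/K$, $gH\mapsto gxK$, is well defined, since $g^{-1}g'\in H=xKx^{-1}$ forces $x^{-1}g^{-1}g'x\in K$; it is bijective (with inverse $g'K\mapsto g'x^{-1}H$) and $G$-equivariant for left multiplication, because $\phi(a\cdot gH)=agxK=a\cdot\phi(gH)$. Writing $\phi(g_iH)=g_{\tau(i)}'K$ defines a permutation $\tau\in\SS_d$, and the equivariance of $\phi$ unwinds to $\tau\bigl(\theta_H(g)(i)\bigr)=\theta_K(g)\bigl(\tau(i)\bigr)$ for all $g$ and $i$, that is, $\tau\,\theta_H(g)=\theta_K(g)\,\tau$. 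Hence $\theta_H=\sigma\,\theta_K\,\sigma^{-1}$ with $\sigma=\tau^{-1}$.

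$(\Leftarrow)$ Assume $\theta_H=\sigma\,\theta_K\,\sigma^{-1}$ for some $\sigma\in\SS_d$, and set $j=\sigma^{-1}(1)$. For $g\in G$,
\[
  g\in H \iff \theta_H(g)(1)=1 \iff \theta_K(g)\bigl(\sigma^{-1}(1)\bigr)=\sigma^{-1}(1) \iff \theta_K(g)(j)=j,
\]
so $H$ is precisely the stabiliser of the point $j$ under $\theta_K$; equivalently, $H$ is the stabiliser of $g_j'K$ for the left-multiplication action of $G$ on $G/K$, and that stabiliser is $g_j'K(g_j')^{-1}$. Therefore $H=g_j'K(g_j')^{-1}$ is $G$-conjugate to $K$ (this also follows directly from the listed property that every point-stabiliser of $\theta_K$ is a $G$-conjugate of $K$).

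The argument is essentially bookkeeping; no genuine obstacle is expected. The only points requiring care are keeping straight the side on which cosets and permutations act, and remembering that $\theta_H$ and $\theta_K$ are defined only relative to the chosen representatives (normalised by $g_1\in H$, $g_1'\in K$) — which is exactly what legitimises identifying $H$ and $K$ with the stabilisers of the point $1$.
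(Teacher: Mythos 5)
Your proof is correct and follows essentially the same route as the paper: in the forward direction you extract the intertwining permutation from the $G$-set isomorphism $gH\mapsto gxK$ (the paper instead transports coset representatives by conjugation and takes $\sigma$ to be left multiplication by the conjugating element), and in the backward direction both arguments identify $H$ with a point stabiliser of $\theta_K$. The only cosmetic difference is that you obtain $H=g_j'K(g_j')^{-1}$ directly from the equivalence $g\in H\iff\theta_K(g)(j)=j$, whereas the paper first reorders representatives to assume $\theta_H=\theta_K$, deduces the inclusion $H\subseteq k_1Kk_1^{-1}$, and then uses equality of indices.
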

\begin{proof} The proof is similar to that of \cite[Theorem
  5.3.3]{MR0414669}, although in such statement the author considers
  only faithful representations. Start supposing that there exists
  $g \in G$ such that $K=gHg^{-1}$. If $\{h_1,\ldots,h_d\}$ is
  a set of left coset representatives for $H$ then
  $\{gh_1g^{-1},\ldots,gh_dg^{-1}\}$ is a set of left coset
  representatives for $K$. Let $t \in G$ such that $t(h_i\cdot H) = h_j\cdot H$, then
  \[
    gtg^{-1} (gh_ig^{-1})\cdot K = (gh_jg^{-1}) \cdot K.
    \]
    Thus if $\sigma$ denotes the element of $\SS_d$ given by left
    multiplication by $g$, we get that
    $\sigma \theta_K \sigma^{-1} = \theta_H$ for the chosen set of
    representatives.

    Conversely, suppose that there exists $\sigma \in \SS_d$ such that
    $\theta_{H}=\sigma\theta_{K}\sigma^{-1}$. Without loss of
    generality (after reordering the left coset representatives of
    $K$), we can furthermore assume that $\theta_{H}=\theta_{K}$ and
    that the first coset representative for $H$ lies in $H$. In particular,
    if $h \in H$, $\theta_H(h)(1) = 1$. Then $\theta_{K}(h)(1)$, so if
    $k_1$ is the first left coset representative for $K$,
    $h (k_1\cdot K) = k_1 \cdot K$, so $h \in k_1Kk_1^{-1}$ and
    $H \subseteq k_1 K k_1^{-1}$. Since both groups have the same index
    in $G$, they must be equal.
\end{proof}

%\begin{prop}
%    Under the above notation, $H$ is normal in $G$ if and only if $\lvert\Sigma\rvert=[G:H]$.
%\end{prop}
%\begin{proof}
%    Given that $H^{N}\leq H\leq G$ then:
%    \[
%    \lvert\Sigma\rvert=[G:H^N]=[G:H][H:H^{N}]	
%    \]
%    Notice that $H$ is normal if and only if $H=H^{N}$ if and only if $[H:H^{N}]=1$ if and only if $\lvert\Sigma\rvert=[G:H]$.
%\end{proof}
%

For a general group $G$ and a subgroup $H$, it is hard to describe the
map $\theta_H$, but if $G$ has a small (and known) number of
generators, then it is enough to compute the permutation corresponding
to each generator. This is indeed the case for $G = \PSL_2(\ZZ)$,
which is generated by the elements
\begin{equation}
  \label{eq:S-T-gens}
  S = \begin{pmatrix} 0 & -1 \\ 1 & 0\end{pmatrix}, \qquad T = \begin{pmatrix} 1 & 1\\ 0 & 1\end{pmatrix}.
\end{equation}
Then the pair $(\theta_H(S),\theta_H(T)) \in \SS_d\times \SS_d$ up to simultaneous
conjugation determines the $\SL_2(\ZZ)$-equivalence class of $H$.  It
is also useful to compute the permutation corresponding to the element
$R = ST$ (which is the composition of the two other permutations), as
it is an element of order $3$ in $\PSL_2(\ZZ)$ (with together with $S$
generates the group).

\begin{defi}
  A \emph{passport} is a tuple $(\sigma_S,\sigma_R,\sigma_T)$ in
  $\SS_d^3$ up to simultaneous conjugation satisfying that
  \begin{itemize}

  \item $\sigma_S^2=1$,
  
  \item $\sigma_R^3=1$,
  
  \item $\sigma_S \sigma_T = \sigma_R$,
  
  \item $\Sigma = \langle \sigma_S,\sigma_R\rangle$ is transitive.
  \end{itemize}
\end{defi}

An important result of Millington (\cite[Theorem 1]{Millington})
implies that the the passport contains a lot of arithmetic information
of the curve $H \backslash \calH$.

%From now on, let us take $\Gamma$ instead $G$ and $G$ instead $H$, and suppose that $[\Gamma:G]=d$. We say that a pair $(\sigma,\tau)$ of permutations in $\mathbb{S}_d$ is \emph{legitimate} if $\sigma^2=\tau^3=(1)$ and $\Sigma=\langle\sigma,\tau\rangle$ is transitive. In the set of legitimate pairs an equivalence relation is defined by:
%\begin{gather*} 
%(\sigma_1,\tau_1)\sim(\sigma_2,\tau_2)\quad\Longleftrightarrow\quad\begin{matrix}\exists\rho\in\mathbb{S}_d\text{ such that:}\\\sigma_2=\rho\sigma_1\rho^{-1},\tau_2=\rho\tau_1\rho^{-1}\text{ and }\rho(1)=1\end{matrix}
%\end{gather*}
%
%An important theorem proved by Millington is:
%
\begin{thm}
  There is a one-to-one correspondence between subgroups $H$ of index
  $d$ in $\PSL_2(\ZZ)$ and equivalence classes of passports
  $(\sigma_S,\sigma_R,\sigma_T)$. Furthermore, keeping the previous
  notation,
    \begin{enumerate}
    \item $e_2$ and $e_3$ are the number of elements fixed by the permutations $\sigma_S$
      and $\sigma_R$ respectively,
    \item $\sigma_T$ has $t$ disjoint cycles (where $t$ equals the
      number of cusps).
    \item Let $d_1,\dots,d_t$ be the lengths of the disjoint cycle
      decomposition of $\sigma_T$ (so $d=\sum_{i=1}^t d_i$). Then
      $\{d_1,\ldots,d_t\}$ equals the set (with repetitions)
      $\{w(C_1),\ldots,w(C_t)\}$ of cusp widths.
    \end{enumerate}
\end{thm}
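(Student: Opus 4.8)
The plan is to start from the presentation $\PSL_2(\ZZ)\cong\langle S\rangle\ast\langle R\rangle\cong C_2\ast C_3$, in which $T=SR$. Given an index-$d$ subgroup $H$, the homomorphism $\theta_H$ introduced above sends $S,R,T$ to permutations $\sigma_S,\sigma_R,\sigma_T$ of $\SS_d$ satisfying $\sigma_S^2=\sigma_R^3=1$ and $\sigma_S\sigma_T=\sigma_R$ (because the identities $S^2=R^3=1$ and $ST=R$ hold in $\PSL_2(\ZZ)$), and with $\Sigma=\langle\sigma_S,\sigma_R\rangle=\theta_H(\PSL_2(\ZZ))$ transitive, since $\PSL_2(\ZZ)$ acts transitively on its own coset space; thus $(\sigma_S,\sigma_R,\sigma_T)$ is a passport, well defined up to the simultaneous conjugation coming from a change of enumeration of the cosets, and by the preceding Proposition its equivalence class depends only on the conjugacy class of $H$. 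Conversely, given a passport, the universal property of the free product produces a unique homomorphism $\theta\colon\PSL_2(\ZZ)\to\SS_d$ with $\theta(S)=\sigma_S$ and $\theta(R)=\sigma_R$ (the only relations to check being $\sigma_S^2=\sigma_R^3=1$), and then $\theta(T)=\sigma_S\sigma_R=\sigma_T$ follows from $\sigma_S\sigma_T=\sigma_R$; transitivity of $\langle\sigma_S,\sigma_R\rangle$ makes $\{1,\dots,d\}$ a transitive $\PSL_2(\ZZ)$-set, and $H:=\mathrm{Stab}_\theta(1)$ is an index-$d$ subgroup whose $\theta_H$ is $\SS_d$-conjugate to $\theta$. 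These two assignments are mutually inverse on conjugacy classes of index-$d$ subgroups, again by the preceding Proposition; this gives the first statement (the bijection being, strictly speaking, with conjugacy classes of such subgroups).

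\textbf{Parts (1) and (2): the orbit dictionary.} For the numbered items I would realise the permutation representation on $H\backslash\PSL_2(\ZZ)$ and use a single observation: for any $w\in\PSL_2(\ZZ)$, the $\langle w\rangle$-orbits on $H\backslash\PSL_2(\ZZ)$ are exactly the cycles of $\theta_H(w)$, and a coset $Hg$ is a fixed point of $\theta_H(w)$ precisely when $gwg^{-1}\in H$. Taking $w=S$: since $\langle S\rangle=\mathrm{Stab}_{\PSL_2(\ZZ)}(i)$ has order $2$, its orbits are the $H$-inequivalent points of $\PSL_2(\ZZ)\cdot i$, and the fixed ones are those that remain elliptic of order $2$ on $\calC_\Sub$; hence the number of fixed points of $\sigma_S$ equals the number of order-$2$ branch points of $\Poincare\to\calC_\Sub$, which is $e_2=|R_0|$ by the results recalled in the previous subsection (from \cite[\S7]{Kulkarni1991}). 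The same argument with $w=R$, of order $3$, identifies the number of fixed points of $\sigma_R$ with $e_3$. For part (2) take $w=T$: as $\langle T\rangle=\mathrm{Stab}_{\PSL_2(\ZZ)}(\infty)$, its orbits on $H\backslash\PSL_2(\ZZ)$ are in bijection with the $H$-orbits on $\PSL_2(\ZZ)\cdot\infty=\PP^1(\QQ)$, i.e. with the cusps of $\calC_\Sub$; since these orbits are the cycles of $\sigma_T$, the number of disjoint cycles of $\sigma_T$ equals $t$, the number of cusp orbits returned by Algorithm~\ref{alg:cusp-orbits}.

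\textbf{Part (3) and the main obstacle.} Keeping the identification of cycles of $\sigma_T$ with cusps, the length of the cycle through $Hg$ is the least $n\ge 1$ with $HgT^n=Hg$, that is the index of $\langle T\rangle\cap g^{-1}Hg$ inside $\langle T\rangle$, which is precisely the width of the cusp $g\cdot\infty$; therefore the multiset of cycle lengths $\{d_1,\dots,d_t\}$ coincides with the multiset of cusp widths $\{w(C_1),\dots,w(C_t)\}$ computed by Algorithm~\ref{alg:width}, and $\sum_i d_i=d$ because $\sigma_T\in\SS_d$. The only input here that is not purely formal is the bridge between these orbit counts and the geometric quantities $e_2,e_3,t,w(C_i)$; but that bridge has already been set up in the ``Relation with geometry'' subsection (Kulkarni's \cite[\S7]{Kulkarni1991}) together with Algorithms~\ref{alg:cusp-orbits} and~\ref{alg:width}, so once the free-product description of $\PSL_2(\ZZ)$ is in hand the proof is essentially bookkeeping. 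Alternatively one may cite Millington's theorem \cite[Theorem~1]{Millington} for (1)--(3) directly, as indicated in the text preceding the statement.
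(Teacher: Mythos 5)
Your proposal is correct, but it is worth noting that the paper itself offers no proof of this theorem at all: it simply attributes the result to Millington (\cite[Theorem 1]{Millington}) in the sentence preceding the statement. What you supply is therefore a genuinely self-contained argument rather than a variant of the paper's: you derive the bijection from the free-product presentation $\PSL_2(\ZZ)\cong C_2\ast C_3$ together with the universal property (so a passport determines a homomorphism to $\SS_d$, and transitivity recovers an index-$d$ stabilizer), and you obtain (1)--(3) from the standard orbit dictionary --- fixed cosets of $\theta_H(w)$ correspond to conjugates of $w$ lying in $H$, cycles of $\sigma_T$ correspond to $H$-orbits on $\PSL_2(\ZZ)/\langle T\rangle\cong\mathbb{P}^1(\QQ)$, and the cycle length through a coset is the index of the cusp stabilizer, i.e.\ the width --- with the geometric identifications of $e_2$, $e_3$, $t$ and the widths taken from the ``Relation with geometry'' subsection (Kulkarni, \S7). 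This buys a complete argument where the paper has only a citation. Two small points you handle well but should keep explicit: the correspondence as literally stated (with ``subgroups'') is really with \emph{conjugacy classes} of subgroups once passports are taken up to arbitrary simultaneous conjugation, which you correctly flag (a fixed-point-free version, conjugating only by permutations fixing the distinguished coset, would recover individual subgroups as in Millington); and you switch between left cosets $gH$ (as in the paper's definition of $\theta_H$) and right cosets $Hg$ in the fixed-point criterion --- the condition is $g^{-1}wg\in H$ in the left-coset convention --- which is harmless but should be made consistent in a final write-up.
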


Using the algorithms described in the previous section, it is easy to
give an algorithm to, given a Kulkarni diagram $\Kul$ together with an
element $\kappa \in \PSL_2(\ZZ)$, compute the permutation
$\sigma_\kappa$: use the algorithm of
Proposition~\ref{prop:representatives} to compute a set
$\{g_1,\ldots,g_d\}$ of left coset representative for
$\Sub \backslash \PSL_2(\ZZ)$. For each $1 \le i \le d$, use
Theorem~\ref{thm:isongroup} to determine the unique $j$ in
$\{1,\ldots,d\}$ such that $g_j^{-1}\kappa g_i$ belongs to
$\Sub$. Then $\sigma_\kappa(i)=j$. This algorithm is implemented in
our \textsf{GAP} package to compute the passport attached to a Kulkarni
diagram.

\subsection{Congruence subgroups}

Recall the following well known definition.
\begin{defi}
  A subgroup $\Sub$ of $\SL_2(\Z)$ is called a \emph{congruence subgroup} if there exists a positive integer $N$ such that the group
  \[
\Gamma(N)=\left\{\begin{pmatrix} a & b \\ c & d \end{pmatrix} \in \SL_2(\ZZ) \; : \; N\mid b, N\mid c, a \equiv 1 \Mod N, d\equiv 1 \Mod N \right\},
    \]
    is contained in $\Sub$. If such $N$ does not exist, we say that
    the subgroup is a non-congruence subgroup. The subgroup
    $\Gamma(N)$ is known as the \emph{main congruence subgroup of
      level $N$}.
  \end{defi}

  The main interest in congruence subgroups is that they have many
  endomorphisms (given by the Hecke operators). In particular, if
  $\Sub$ is a congruence subgroup, then the curve $\calC_\Sub$ is
  defined over a cyclotomic extension, and its Jacobian is isogenous
  to abelian varieties having very special properties (they are what
  is called of $\GL_2$-type, see for example \cite{MR1291394}).
  
\begin{defi}
  Let $\Kul$ be a Kulkarni diagram and $\{C_1,\ldots,C_t\}$ be the
  set of inequivalent cusps. The \emph{generalized level of $\Kul$} is
  defined to be the least common multiple of $\{W(C_1),\ldots,W(C_t)\}$.
\end{defi}

\begin{prop}
  If $\Sub$ is a congruence subgroup of generalized level $N$, then $\Gamma(N) \subseteq \Sub$.
\end{prop}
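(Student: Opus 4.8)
The plan is to deduce this from the classical theorem of Wohlfahrt that the level of a congruence subgroup equals the least common multiple of its cusp widths. Replacing $\Sub$ by its preimage in $\SL_2(\ZZ)$ (which contains $-\Id$ and has the same cusp widths $W(C_1),\dots,W(C_t)$, hence the same generalized level $N$), the statement $\Gamma(N)\subseteq\Sub$ is unchanged. Since $\Sub$ is congruence, fix $M$ with $\Gamma(M)\subseteq\Sub$. The first step is to observe that $N\mid M$: every cusp of $\Sub$ has the form $\gamma\cdot\infty$ for some $\gamma\in\SL_2(\ZZ)$ (because $\SL_2(\ZZ)$ acts transitively on $\PP^1(\QQ)$), its width $w$ is the least positive integer with $\gamma T^{w}\gamma^{-1}\in\Sub$, and normality of $\Gamma(M)$ in $\SL_2(\ZZ)$ gives $\gamma T^{M}\gamma^{-1}\in\Gamma(M)\subseteq\Sub$, hence $w\mid M$; taking the least common multiple over the cusp classes yields $N\mid M$, so $\Gamma(M)\subseteq\Gamma(N)$. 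It therefore suffices to prove the inclusion after reducing modulo the normal subgroup $\Gamma(M)$, i.e.\ to prove $\overline{\Gamma(N)}\subseteq\overline{\Sub}$ inside $\SL_2(\ZZ)/\Gamma(M)=\SL_2(\ZZ/M\ZZ)$.

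Write $K=\overline{\Gamma(N)}$, which is precisely the kernel of the reduction map $\SL_2(\ZZ/M\ZZ)\to\SL_2(\ZZ/N\ZZ)$. The argument then splits into two facts. First, $\overline{\Sub}$ contains every $\SL_2(\ZZ/M\ZZ)$-conjugate of $\overline{T^{N}}$: since $\SL_2(\ZZ)\to\SL_2(\ZZ/M\ZZ)$ is surjective, any such conjugate is $\overline{\gamma T^{N}\gamma^{-1}}$ with $\gamma\in\SL_2(\ZZ)$, and the cusp $\gamma\cdot\infty$ has some width $w\mid N$, so $\gamma T^{w}\gamma^{-1}\in\Sub$ and hence $\gamma T^{N}\gamma^{-1}=(\gamma T^{w}\gamma^{-1})^{N/w}\in\Sub$. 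Second---and this is the part carrying the real content---the kernel $K$ is generated by the $\SL_2(\ZZ/M\ZZ)$-conjugates of $\overline{T^{N}}$. Granting this, the two facts combine to give $K\subseteq\overline{\Sub}$, which is the assertion.

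The main obstacle is thus the generation statement for $K$. I would prove it via the Chinese Remainder Theorem: $\SL_2(\ZZ/M\ZZ)\cong\prod_{p\mid M}\SL_2(\ZZ/p^{a_p}\ZZ)$ identifies $K$ with $\prod_{p}K_p$, where $K_p$ is the kernel of $\SL_2(\ZZ/p^{a_p}\ZZ)\to\SL_2(\ZZ/p^{b_p}\ZZ)$ and $b_p\le a_p$ is the exponent of $p$ in $N$; since $N/p^{b_p}$ is a unit modulo $p^{a_p}$, a suitable power of $\overline{T^{N}}$ (chosen by CRT) equals $\left(\begin{smallmatrix}1 & p^{b_p}\\0 & 1\end{smallmatrix}\right)$ in the $p$-th factor and the identity in the others, so it is enough to check, factor by factor, that the normal closure of $\left(\begin{smallmatrix}1 & p^{b}\\0 & 1\end{smallmatrix}\right)$ in $\SL_2(\ZZ/p^{a}\ZZ)$ is the whole congruence kernel $K_p$ (the case $b=a$ being vacuous, and the case $b=0$ reducing to the fact that $\SL_2(\ZZ/p^{a}\ZZ)$ is generated by elementary matrices). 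Equivalently, one must show that for the local ring $\ZZ/p^{a}\ZZ$ the relative elementary subgroup $E_2\!\left(\ZZ/p^{a}\ZZ,\,p^{b}\ZZ/p^{a}\ZZ\right)$ coincides with the relative congruence subgroup; this holds because $\ZZ/p^{a}\ZZ$ has stable rank one, and for odd $p$ it follows from a short manipulation of commutators of elementary matrices, with the prime $2$ requiring only a routine extra verification. Everything else---transitivity of $\SL_2(\ZZ)$ on $\PP^1(\QQ)$, the description of cusp widths already used in Algorithm~\ref{alg:width}, and surjectivity of $\SL_2(\ZZ)\to\SL_2(\ZZ/M\ZZ)$---is standard bookkeeping.
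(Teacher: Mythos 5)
Your argument is sound, but be aware that the paper does not prove this proposition at all: its ``proof'' is the citation of Wohlfahrt's Theorem~2, so what you have written is essentially a self-contained reconstruction of Wohlfahrt's theorem along the classical lines. Your reductions are all correct: passing to the preimage containing $-\Id$ matches the paper's $\PSL_2(\ZZ)$ setting; $N\mid M$ holds because the exponents $k$ with $\gamma T^{k}\gamma^{-1}\in\Sub$ form a subgroup of $\ZZ$ containing $M$; the image of $\Gamma(N)$ in $\SL_2(\ZZ/M\ZZ)$ is the reduction kernel by surjectivity of $\SL_2(\ZZ)\to\SL_2(\ZZ/M\ZZ)$; and the CRT power trick is exactly what is needed, since the normal closure of a single element of a direct product need not be the product of componentwise normal closures. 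The one thin spot is the local lemma that the normal closure of $\left(\begin{smallmatrix}1&p^{b}\\0&1\end{smallmatrix}\right)$ in $\SL_2(\ZZ/p^{a}\ZZ)$ is the whole kernel of reduction mod $p^{b}$: appealing to ``relative elementary equals relative congruence'' via stable rank one is shaky for $2\times2$ matrices (it is the locality of $\ZZ/p^{a}\ZZ$ that rescues it), and the cleanest completion is the filtration argument: with $K_j$ the kernel of reduction mod $p^{j}$, the exact powers $I+p^{j}E_{12}$ and their conjugates span $K_j/K_{j+1}\cong\{\text{trace-zero matrices over }\F_p\}$ (also at $p=2$, where conjugating by a lower unipotent produces the all-ones matrix and hence $I$), so downward induction on $j$ finishes, the step $j=0$ being elementary generation of $\SL_2(\F_p)$. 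With that filled in, your two facts combine as claimed; compared with the paper you trade one line of citation for a complete argument.
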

\begin{proof}
See \cite[Theorem 2]{Wohlfahrt1964}.
\end{proof}
%We will later recall an algorithm due to Hsu to determine whether a group $\Sub$ is a congruence subgroup or not. In case it is, Algorithm~\ref{alg:width} allows to compute its level $N$.
% It holds that if $\Phi$ is a congruence subgroup 
% Moreover, it holds
%that $[\Gamma:G]=\sum h_k$ (\cite{Kulkarni1991},
%
%
%Applying this procedure to the elements $S$ and $T$, we can compute
%their permutations, and the passport attached to $T$.
For
completeness, we include the following algorithm due to Hsu for
determining whether a group $\Sub$ is a congruence group or not.
\begin{thm}
  Let $\Sub$ a subgroup of $\PSL_2(\ZZ)$ and let
  \[
    \sigma_L = \theta_\Sub\left(\left(\begin{smallmatrix}1 & 1\\ 0 & 1\end{smallmatrix}\right)\right),\qquad
    \sigma_R = \theta_\Sub\left(\left(\begin{smallmatrix}1 & 0\\ 1 & 1\end{smallmatrix}\right)\right).
  \]
    Let $N = e \cdot m$ be the order of $L$, where $e$ is a power of $2$ and
    $m$ is an odd positive integer.  Then the following algorithm
    determines whether $\Sub$ is a congruence subgroup or not:
\begin{algorithmic}[1]
\IF{$e=1$}
\IF{$\sigma_R^2\sigma_L^{-(N+1)/2}=1$}
\RETURN true
\ENDIF
\ELSIF{$m=1$}
\STATE $z\leftarrow\min\{x\in\mathbb{N}:N\mid1-5x\}$
\STATE $\tau\leftarrow\sigma_L^{20}\sigma_R^z\sigma_L^{-4}\sigma_R^{-1}$
\IF{$[\tau,\sigma_L\sigma_R^{-1}\sigma_L]=\tau^{-2}$ \AND $[\sigma_R,\tau]=\sigma_R^{24}$ \AND $(\tau\sigma_R^5\sigma_L\sigma_R^{-1}\sigma_L)^3=1$} 
\RETURN true
\ENDIF
\ELSE
\STATE $z\leftarrow\min\{x\in\mathbb{N}:e\mid1-5x\}$
\STATE $c\leftarrow\text{Solve}\begin{cases}x\equiv0\mod{e}\\x\equiv1\mod{m}\end{cases}$
\STATE $d\leftarrow\text{Solve}\begin{cases}x\equiv1\mod{e}\\x\equiv0\mod{m}\end{cases}$
\STATE $a\leftarrow\sigma_L^c$
\STATE $b\leftarrow\sigma_R^c$
\STATE $l\leftarrow\sigma_L^d$
\STATE $r\leftarrow\sigma_R^d$
\STATE $s\leftarrow l^{20}r^zl^{-4}r^{-1}$
\IF{$[a,r]=1$ \AND $(ab^{-1}a)^4=1$ \AND $(ab^{-1}a)^2=(b^{-1}a)^3=(b^2a^{-(m+1)/2})^3$
\AND $[s,lr^{-1}l]=s^{-2}$ \AND $[r,s]=r^{24}$ \AND $(lr^{-1}l)^2=(sr^5lr^{-1}l)^3$}
\RETURN true
\ENDIF
\ENDIF
\RETURN false
\end{algorithmic}
\end{thm}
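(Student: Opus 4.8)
The plan is to reduce the congruence question to a finite group-theoretic check via Wohlfahrt's theorem, and then to invoke explicit presentations of $\PSL_2(\ZZ/N\ZZ)$ to convert that check into the stated relations among the permutations $\sigma_L$ and $\sigma_R$.

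First I would note that, since $L=T=\left(\begin{smallmatrix}1 & 1\\ 0 & 1\end{smallmatrix}\right)$, the permutation $\sigma_L=\theta_\Sub(L)$ is the permutation $\sigma_T$ of Section~\ref{sec:passports}, whose disjoint cycles have lengths equal to the cusp widths of $\Sub$; hence the order $N$ of $\sigma_L$ is the generalized level of $\Sub$. By Wohlfahrt's theorem (the preceding proposition), $\Sub$ is a congruence subgroup if and only if it contains $\Gamma(N)$. Since $\ker\theta_\Sub$ is the normal core of $\Sub$ in $\PSL_2(\ZZ)$ and $\Gamma(N)$ is normal, this is in turn equivalent to $\theta_\Sub\colon\PSL_2(\ZZ)\to\SS_d$ factoring through reduction modulo $N$, i.e.\ through the finite group $\PSL_2(\ZZ/N\ZZ)$. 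Finally, because $\PSL_2(\ZZ)$ is generated by $L$ and $R=\left(\begin{smallmatrix}1 & 0\\ 1 & 1\end{smallmatrix}\right)$, that condition holds precisely when the pair $(\sigma_L,\sigma_R)$ satisfies a complete set of relations presenting $\PSL_2(\ZZ/N\ZZ)$ as a quotient of $\PSL_2(\ZZ)$ on these two generators.

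Next I would use the Chinese remainder theorem to split everything prime-power-wise: writing $N=e\,m$ with $e$ a power of $2$ and $m$ odd, one has $\PSL_2(\ZZ/N\ZZ)\cong\PSL_2(\ZZ/e\ZZ)\times\PSL_2(\ZZ/m\ZZ)$, so it suffices to present each factor on $L,R$ and recombine. For odd $m$ (the $e=1$ branch) the single extra relation $\sigma_R^{2}\sigma_L^{-(m+1)/2}=1$ should suffice, the relation $\sigma_L^{N}=1$ being automatic; for $e$ a power of $2$ (the $m=1$ branch) the factor $\PSL_2(\ZZ/e\ZZ)$ needs the longer list of relations appearing there. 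In the mixed case I would isolate the two factors using Chinese remainder idempotents: choosing $c\equiv 0\pmod e$, $c\equiv 1\pmod m$ and $d\equiv 1\pmod e$, $d\equiv 0\pmod m$, the elements $a=\sigma_L^{c},\,b=\sigma_R^{c}$ act only through the odd factor and $l=\sigma_L^{d},\,r=\sigma_R^{d}$ only through the $2$-factor, and the conjunction of the commutation $[a,r]=1$ (which records the direct product structure), the odd-part relations imposed on $(a,b)$, and the $2$-part relations imposed on $(l,r)$ becomes equivalent to $\theta_\Sub$ factoring through $\PSL_2(\ZZ/N\ZZ)$ — which is exactly the general branch of the algorithm.

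The hard part will be producing the explicit finite presentations of $\PSL_2(\ZZ/N\ZZ)$ (equivalently $\SL_2(\ZZ/N\ZZ)$) on the two generators $L$ and $R$ on which the whole scheme rests; this is the genuinely nontrivial input, due to Hsu \cite{Hsu1996} and built on classical presentations of $\SL_2(\ZZ/N\ZZ)$, and I would cite it rather than reprove it. Once those presentations are granted, what remains is only the formal observation that a homomorphism out of a finitely presented group factors through a prescribed quotient exactly when the images of the generators satisfy the defining relations of that quotient, applied together with Wohlfahrt's theorem as above; checking that each branch of the algorithm transcribes the corresponding relation list (and that the idempotent twists are correct in the mixed branch) is then routine.
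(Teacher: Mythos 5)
Your proposal is correct and takes essentially the same route as the paper: the paper's entire proof is a citation of Hsu's Theorem 2.4 (and its implementation in \S 3 of loc.\ cit.), and your outline — Wohlfahrt's theorem, the equivalence of congruence with $\theta_\Sub$ factoring through $\PSL_2(\ZZ/N\ZZ)$, the CRT splitting into the $2$-part and odd part, and Hsu's explicit two-generator relations as the cited nontrivial input — is exactly the argument underlying that reference. Since you, like the paper, defer the presentations themselves to Hsu rather than reproving them, there is no substantive difference in approach.
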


\begin{proof}
  See \cite[Theorem 2.4]{HSUalgorithm} and $\S 3$ of loc. cit. for the implementation.
\end{proof}

\section{Some numerical data}
\label{sec:data}

We have systematically run our algorithm to compute Kulkarni
diagrams for subgroups up to index $20$. For each of them, we computed
the number of conjugacy classes for $\SL_2$ and $\GL_2$-subgroups. The
results are presented in Table~\ref{table:sizes}.
\begin{table}[]
\begin{tabular}{l|l|l|l}
  $n$ & $\#\Kul(n)$ & $SL_2(\Z)$-classes & $GL_2(\Z)$-classes \\
  \hline
  2   & 1        & 1                 & 1                 \\
3   & 2        & 2                 & 2                 \\
4   & 2        & 2                 & 2                 \\
5   & 1        & 1                 & 1                 \\
6   & 9        & 8                 & 8                 \\
7   & 8        & 6                 & 4                  \\
8   & 8        & 7                 & 6                  \\
9   & 54       & 14                & 12                  \\
10  & 101      & 27                & 19                  \\
11  & 80	   & 26                & 16                  \\
12  & 440      & 80                & 63                  \\
13  & 790      & 133               & 73                  \\
14  & 770      & 170               & 106                 \\
15  & 3184     & 348               & 213                  \\
16  & 6540     & 765               & 428                  \\
17  & 6582     & 1002              & 533                  \\
18  & 28958    & 2176              & 1277                 \\
19  & 61072    & 4682              & 2410                 \\
20  & 68920    & 6931              & 3679                \\
\end{tabular}
\caption{\label{table:sizes} Number of Kulkarni diagram vs $\SL_2$ and $\GL_2$ conjugacy classes.}
\end{table}
Note that the number
of Kulkarni diagrams grows much faster than the real number of
equivalent classes of subgroups. To our knowledge this is a phenomena
that was not observed before (since in the article published by
Kulkarni only subgroups with small index are computed). As mentioned
in the introduction, all these information can be downloaded from the GitHub
repository \url{https://github.com/vendramin/subgroups}.

In Table~\ref{table:subgroups} we give all the information obtained
from each $\SL_2(\Z)$-equivalence class for subgroups up to index $7$
(as it might be useful to the reader).  Some 
information phenomena obtained from our database:
\begin{itemize}
\item Among all subgroups (up to $\SL_2(\Z)$-equivalence), there are
  $90$ congruence ones (from a total of $16381$ subgroups),
  i.e. congruence groups are very rare even for small indices.
\item Up to index $20$, there are $2410$ groups corresponding to curves of genus $1$ (from a total of $16381$ subgroups).  
\item There are only $9$ non-conjugate subgroups whose curve has genus
  $2$. They all correspond to subgroups of index $18$ (as the example
  found in \cite{MR0337781}).   
\end{itemize}

Let us state some properties regarding the genus $2$ curves. Here is a list of their Kulkarni diagrams together with their passports:
\begin{enumerate}
\item The tree diagram equals \begin{minipage}{2.1cm}\includegraphics[width=60pt,height=30pt]{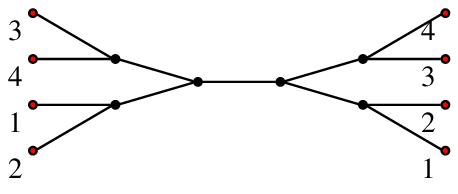}\end{minipage}, the
  g.F.s. equals
  $\{\infty, 0,1,\frac{4}{3},\frac{3}{2},\frac{5}{3},2,3,\infty\}$ and
  its passport equals
  \begin{itemize}
  \item $\sigma_S=(1,5)(2,8)(3,15)(4,9)(6,12)(7,16)(10,14)(11,17)(13,18)$,
    
  \item   $\sigma_R=(1,8,4)(2,15,7)(3,18,14)(5,12,9)(6,16,11)(10,17,13)$,
    
  \item   $\sigma_T=(1,2,3,13,14,15,16,17,18,10,11,12,4,5,6,7,8,9)$.
  \end{itemize}

\item The tree diagram equals \begin{minipage}{2.1cm}\includegraphics[width=60pt,height=30pt]{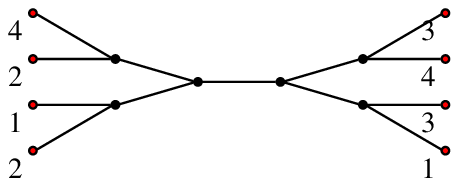}\end{minipage}, the
  g.F.s. equals
  $\{\infty, 0,1,\frac{4}{3},\frac{3}{2},\frac{5}{3},2,3,\infty\}$ and
  its passport equals
  \begin{itemize}
  \item $\sigma_S=(1,5)(2,8)(3,15)(4,18)(6,12)(7,16)(9,13)(10,14)(11,17)$,
    
  \item $\sigma_R=(1,8,4)(2,15,7)(3,18,14)(5,12,9)(6,16,11)(10,17,13)$,
    
  \item $\sigma_T=(1,2,3,4,5,6,7,8,18,10,11,12,13,14,15,16,17,9)$.
  \end{itemize}

\item The tree diagram equals \begin{minipage}{2.1cm}\includegraphics[width=60pt,height=30pt]{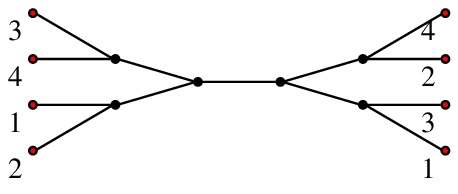}\end{minipage}, the
  g.F.s. equals
  $\{\infty, 0,1,\frac{4}{3},\frac{3}{2},\frac{5}{3},2,3,\infty\}$ and
  its passport equals
      \begin{itemize}
    \item $\sigma_S=(1,5)(2,8)(3,15)(4,10)(6,12)(7,16)(9,14)(11,17)(13,18)$,
      
    \item $\sigma_R=(1,8,4)(2,15,7)(3,18,14)(5,12,9)(6,16,11)(10,17,13)$,
      
    \item $\sigma_T=(1,2,3,13,4,5,6,7,8,10,11,12,14,15,16,17,18,9)$.
  \end{itemize}

\item The tree diagram equals \begin{minipage}{2.1cm}\includegraphics[width=60pt,height=30pt]{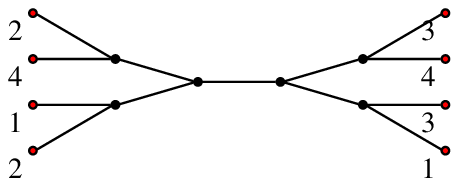}\end{minipage}, the
  g.F.s. equals
  $\{\infty, 0,1,\frac{4}{3},\frac{3}{2},\frac{5}{3},2,3,\infty\}$ and
  its passport equals
\begin{itemize}
  \item $\sigma_S=(1,5)(2,8)(3,15)(4,14)(6,12)(7,16)(9,13)(10,18)(11,17)$,
    
  \item  $\sigma_R=(1,8,4)(2,15,7)(3,18,14)(5,12,9)(6,16,11)(10,17,13)$,
    
  \item $\sigma_T=(1,2,3,10,11,12,13,18,4,5,6,7,8,14,15,16,17,9)$.
  \end{itemize}

\item The tree diagram equals \begin{minipage}{2.1cm}\includegraphics[width=60pt,height=30pt]{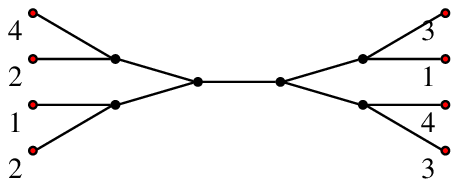}\end{minipage}, the
  g.F.s. equals
  $\{\infty, 0,1,\frac{4}{3},\frac{3}{2},\frac{5}{3},2,3,\infty\}$ and
  its passport equals
  \begin{itemize}
  \item $\sigma_S=(1,10)(2,8)(3,15)(4,18)(5,13)(6,12)(7,16)(9,14)(11,17)$,
    
  \item  $\sigma_R=(1,8,4)(2,15,7)(3,18,14)(5,12,9)(6,16,11)(10,17,13)$,
    
  \item $\sigma_T=(1,2,3,4,10,11,12,14,15,16,17,5,6,7,8,18,9,13)$.
  \end{itemize}

\item The tree diagram equals \begin{minipage}{2.1cm}\includegraphics[width=60pt,height=30pt]{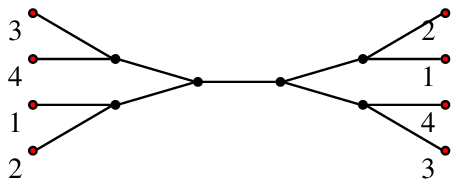}\end{minipage}, the
  g.F.s. equals
  $\{\infty, 0,1,\frac{4}{3},\frac{3}{2},\frac{5}{3},2,3,\infty\}$ and
  its passport equals
  \begin{itemize}
  \item $\sigma_S=(1,10)(2,8)(3,15)(4,13)(5,14)(6,12)(7,16)(9,18)(11,17)$,
  \item  $\sigma_R=(1,8,4)(2,15,7)(3,18,14)(5,12,9)(6,16,11)(10,17,13)$,    
  \item $\sigma_T=(1,2,3,9,14,15,16,17,4,10,11,12,18,5,6,7,8,13)$.
  \end{itemize}

\item The tree diagram equals \begin{minipage}{2.1cm}\includegraphics[width=60pt,height=30pt]{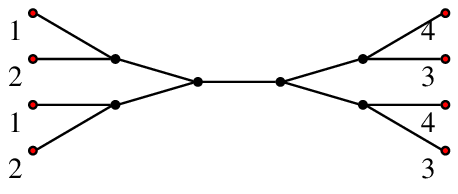}\end{minipage}, the
  g.F.s. equals
  $\{\infty, 0,1,\frac{4}{3},\frac{3}{2},\frac{5}{3},2,3,\infty\}$ and
  its passport equals
  \begin{itemize}
  \item $\sigma_S=(1,14)(2,8)(3,15)(4,18)(5,10)(6,12)(7,16)(9,13)(11,17)$,
  \item  $\sigma_R=(1,8,4)(2,15,7)(3,18,14)(5,12,9)(6,16,11)(10,17,13)$,
  \item $\sigma_T= (1,2,3,4,14,15,16,17,9,10,11,12,13,5,6,7,8,18)$.
  \end{itemize}

\item The tree diagram equals \begin{minipage}{2.1cm}\includegraphics[width=60pt,height=30pt]{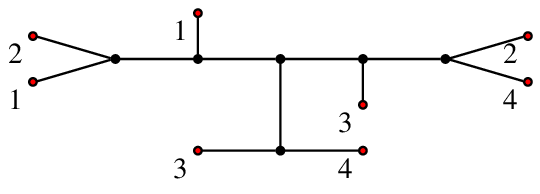}\end{minipage}, the
  g.F.s. equals
  $\{\infty, 0,\frac{1}{2},1,\frac{3}{2},2,3,4,\infty\}$ and
  its passport equals
  \begin{itemize}
  \item $\sigma_S=(1,7)(2,10)(3,14)(4,17)(5,18)(6,11)(8,13)(9,15)(12,16)$,
  \item  $\sigma_R=(1,10,6)(2,14,9)(3,17,13)(4,18,16)(5,11,7)(8,15,12)$,
  \item $\sigma_T=(1,2,3,4,5,6,7,18,12,13,14,15,16,17,8,9,10,11)$.
  \end{itemize}

\item The tree diagram equals \begin{minipage}{2.1cm}\includegraphics[width=60pt,height=30pt]{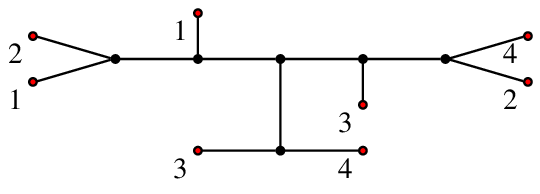}\end{minipage}, the
  g.F.s. equals
  $\{\infty, 0,\frac{1}{2},1,\frac{3}{2},2,3,4,\infty\}$ and
  its passport equals
  \begin{itemize}
  \item $\sigma_S=(1,7)(2,10)(3,14)(4,17)(5,16)(6,11)(8,13)(9,15)(12,18)$,
    
  \item  $\sigma_R=(1,10,6)(2,14,9)(3,17,13)(4,18,16)(5,11,7)(8,15,12)$,
  \item $\sigma_T=(1,2,3,4,12,13,14,15,18,5,6,7,16,17,8,9,10,11)$.
  \end{itemize}
\end{enumerate}

The order of the image of $\theta_\Sub$ in each case equals
$1008, 258048, 486, 4896, 258048, 648, 258048, 4896, 4896$
respectively. Furthermore, the fourth, the eighth one and the ninth
ones are isomorphic to $\PGL_2(\F_{17})$. Then this three groups must
be the ones found by Atkin and Swinnerton-Dyer in \cite[Table
1]{MR0337781} (corresponding to a curve defined over the cubic field
with generating polynomial $x^3-3x+1$).

Recall that the modular curve attached to a congruence subgroup has
many endomorphisms (corresponding to the so called Hecke operators),
while modular curves for non-congruence subgroups tend to not have
endomorphisms at all (there are no Hecke operators in the new part, as
proved in \cite{MR1302789}). In particular, for each of the previous
genus $2$ curves, if there is no contribution from smaller levels, one
expects the Jacobian to be absolutely simple when the subgroup is a
non-congruence one.

The third one and the seventh groups are congruence ones, hence they
do have many endomorphisms. Let us study what happens for the remaining
seven ones. Here is an elementary result to determine whether a group
is contained in a larger (proper) subgroup of $\PSL_2(\Z)$ in terms of
the passport representation.

\begin{lemma}
  \label{lemma:subgroup}
  Let $\Sub$ be a subgroup of $\PSL_2(\Z)$ of index $d$, with coset
  representatives $\{\mu_1,\ldots,\mu_d\}$. Let $\sigma_S$, $\sigma_T$
  be its permutation representation. Then there exists a subgroup
  $\tilde{\Sub}$ of $\PSL_2(\Z)$ of index $n$ (with $d \mid n$)
  containing $\Sub$ if and only if the set $\{\mu_1,\ldots,\mu_d\}$ can be
  written as a disjoint union of $n$ sets each of them with
  $\frac{d}{n}$ elements satisfying that $\sigma_S$ and $\sigma_T$
  preserve the partition.
\end{lemma}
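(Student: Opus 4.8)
The plan is to reduce the statement to the elementary fact that intermediate subgroups $\Sub \subseteq \tilde{\Sub} \subseteq \PSL_2(\Z)$ correspond bijectively to block systems of the permutation action of $\PSL_2(\Z)$ on the left cosets $\Sub \backslash \PSL_2(\Z)$. Concretely, write $G = \PSL_2(\Z)$, $d = [G:\Sub]$, let $X = \{g_1\Sub, \ldots, g_d\Sub\}$ be the set of left cosets with $g_1 \in \Sub$, and recall from the discussion preceding this lemma that the permutation representation $\theta_\Sub \colon G \to \SS_X$ satisfies $\theta_\Sub(g)(g_i\Sub) = gg_i\Sub$, and that $\Sub$ is recovered as the stabilizer of the point $g_1\Sub$. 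The key observation is that since $\theta_\Sub(G)$ acts transitively on $X$, the intermediate subgroups containing $\Sub$ are in bijection with the \emph{blocks} of this action containing $g_1\Sub$ (a standard fact from permutation group theory, e.g. \cite[Theorem 5.3.1]{MR0414669} or Dixon--Mortimer): to an intermediate group $\tilde\Sub$ one associates the block $\{h\Sub : h \in \tilde\Sub\} \subseteq X$, and conversely a block $B \ni g_1\Sub$ gives $\tilde\Sub = \{g \in G : \theta_\Sub(g)(B) = B\}$, which contains $\Sub$ and has index $|B|$ in... wait — has index $[G : \tilde\Sub] = |X|/|B| = d/|B|$, equivalently $[\tilde\Sub : \Sub] = |B|$.

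First I would set up this correspondence carefully, checking the two directions are mutually inverse. Then I would translate ``block system'' into the language of the lemma: a partition of $X$ into blocks, all of the same size, each a block for the $\theta_\Sub(G)$-action, is the same thing as a partition of $\{1,\ldots,d\}$ (after identifying $g_i\Sub$ with $i$, and hence $\{\mu_1,\ldots,\mu_d\}$ with $\{1,\ldots,d\}$) that is preserved by every element of $\theta_\Sub(G)$. Since $G$ is generated by $S$ and $T$ (equation~\eqref{eq:S-T-gens}), a partition is preserved by all of $\theta_\Sub(G)$ if and only if it is preserved by $\sigma_S = \theta_\Sub(S)$ and $\sigma_T = \theta_\Sub(T)$. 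This is exactly the condition in the statement. Finally, if such a partition into $n$ parts of size $d/n$ exists, the block containing the index of $g_1\Sub$ gives an intermediate group $\tilde\Sub$ with $[G:\tilde\Sub] = n$, so $d = [G:\tilde\Sub]\,[\tilde\Sub:\Sub]$ forces $d \mid n$... — here I must be careful about the direction of the divisibility: $[\tilde\Sub:\Sub] = d/n$ must be a positive integer, so $n \mid d$, and the hypothesis $d \mid n$ in the lemma combined with $n \mid d$ would force $n = d$. I would flag this: the natural reading is $n \mid d$ (so that blocks of size $d/n$ make sense), and I would either correct the statement to ``$n \mid d$'' or interpret $n$ as $[\tilde\Sub:\Sub]$ rather than $[G:\tilde\Sub]$; the mathematical content is unambiguous once the correspondence is in place.

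The main obstacle is not any deep difficulty but rather \emph{bookkeeping precision}: making sure the identification of cosets with $\{1,\ldots,d\}$ is the same one used to define $\sigma_S,\sigma_T$, that ``block'' is used in the strict sense (a set $B$ with $\theta_\Sub(g)B = B$ or $\theta_\Sub(g)B \cap B = \emptyset$ for all $g$), and that the index arithmetic is stated consistently. I would also need to verify the ``all blocks the same size'' clause is automatic for a genuine block system under a transitive action (it is, since the group permutes the blocks transitively), so the hypothesis in the lemma is really just asking for a $\sigma_S,\sigma_T$-invariant partition into equal parts, and the equal-size requirement is the only nontrivial constraint beyond invariance. Once these points are nailed down, the proof is a short application of the orbit--block correspondence together with the fact that invariance under a generating set suffices.
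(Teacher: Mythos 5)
Your proposal is correct and takes essentially the same route as the paper: both directions come down to the correspondence between $\sigma_S,\sigma_T$-invariant partitions (block systems, with invariance under the generators $S,T$ sufficing for invariance under all of $\theta_{\Sub}(\PSL_2(\Z))$) and intermediate subgroups, the intermediate group being recovered as the stabilizer of the block containing the trivial coset and its index computed from transitivity of the induced action on blocks. You are also right to flag the divisibility: the paper's own proof (and its later application with $d=18$, $n=9$) uses $n\mid d$, so the ``$d\mid n$'' in the statement is a typo.
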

\begin{proof}
  If there exists such a subgroup $\tilde{\Sub}$, let
  $\{g_1,\ldots,g_n\}$ be coset representatives for
  $\tilde{\Sub} \backslash \PSL_2(\Z)$ and
  $\{h_1,\ldots,h_{\frac{d}{n}}\}$ be coset representatives for
  $\Sub\backslash \tilde{\Sub}$, so that $\{h_j g_i\}$ are
  representatives for $\Sub \backslash \PSL_2(\Z)$. In particular, the
  set of representatives for $\Sub \backslash \PSL_2(\Z)$ can be
  written as the disjoint union
  \begin{equation}
    \label{eq:partition}
    \bigcup_{i=1}^n\{h_1g_i,\ldots,h_{\frac{d}{n}}g_i\}.    
  \end{equation}
  Let $\nu$ be any element of $\PSL_2(\Z)$. If
  $\tilde{\Sub}g_i\nu = \tilde{\Sub}g_j$ then the sets
  \[
    \{\Sub h_1g_i\nu,\ldots,\Sub h_{\frac{d}{n}}g_i\nu\}\text{ and }
  \{\Sub h_1g_j,\ldots,\Sub h_{\frac{d}{n}}g_j\}
  \] 
  must be the same,
  so $\sigma_S$ and $\sigma_T$ preserve the partition~(\ref{eq:partition}).

  Reciprocally, let $\{g_1,\ldots,g_d\}$ be coset representatives of
  $\Sub\backslash \PSL_2(\Z)$, and let
  $\theta_\Sub\colon\PSL_2(\Z) \to \SS_d$ be as before (identifying the set
  $\{g_1,\ldots,g_d\}$ with the set $\{1,\ldots,d\}$). Assume that
  $g_1 \in \Sub$, so that $\Sub$ corresponds precisely to the elements
  of $\PSL_2(\Z)$ whose image under $\theta_\Sub$ fix $1$.

  By hypothesis (after relabeling the indexes if necessary) the set
  $\{1,\ldots,d\}$ can be written as the disjoint union of the $n$
  sets
  $\{1,\ldots,\frac{d}{n}\}\cup \ldots \cup
  \{d-\frac{d}{n}+1,\ldots,d\}$, and our representation
  $\theta_{\Sub}$ induces a morphism $\widetilde{\theta_{\Sub}}$ of
  $\PSL_2(\Z)$ onto $\SS_n$ (via the action on the previous sets).  Let
  $\tilde{\Sub}$ the set of elements of $\PSL_2(\Z)$ that preserve the
  set $\{1,\ldots,\frac{d}{n}\}$. Clearly $\tilde{\Sub}$ contains
  $\Sub$, and the index of $\tilde{\Sub}$ on $\PSL_2(\Z)$ is $n$
  because the image of $\theta_{\Sub}$ is a transitive group (so the
  same holds for $\widetilde{\theta_{\Sub}}$).
\end{proof}

Let us compute for each of our genus $2$ subgroups $\Sub$ their
``old'' part. If there exists a subgroup $\tilde{\Sub}$ with index $9$
in $\PSL_2(\Z)$, then the lemma implies that our set of $18$ elements
can be split into $9$ sets of size $2$, which are preserved under the
action of $\PSL_2(\Z)$. Note that $\sigma_T$ is an $18$-cycle in all
cases. Suppose that such a partition exists, and that $\{a,b\}$ is a
pair of elements of it. Since $\sigma_T$ acts transitively, there
exists $i$ such that $b=\sigma_T^i(a)$. Applying $\sigma_T^i$ to the
pair $\{a,b\}$ we obtain the pair
$\{\sigma^i_T(a),\sigma^i_T(b)\} = \{b,\sigma_T^{2i}(a)\}$, so
$a = \sigma_T^{2i}(a)$, hence $i=9$. In particular, the partition must
be of the form $\{a,\sigma_T^9(a)\}$ for $a$ varying in the set
$\{1,\ldots,18\}$. Then we are led to verify whether the permutation
$\sigma_S$ preserves this partition or not. Consider each of the nine
cases separately.
\begin{enumerate}
\item We obtain the partition:
  \[
    \{1,10\}\{2,11\}\{3,12\}\{4,13\}\{5,14\}\{6,15\}\{7,16\}\{8,17\}\{9,18\}.
    \] Clearly
  the element $\sigma_S$ preserves the partition, so this contains the
  subgroup of index $9$ with passport $\sigma_S=(1,5)(2,8)(3,6)(4,9)$
  and $\sigma_R=(1,8,4)(2,6,7)(3,9,5)$. It matches the 12th element in
  the GitHub repository of subgroups of index $9$. It is easy to verify that it corresponds
  to a genus $1$ curve. In particular, the Jacobian of the curve is
  isogenous to the product of two elliptic curves.
  
\item A similar computation as the previous one proves that this
  second group is contained in an index $9$ subgroup with the same
  passport as the previous case. Once again, the surface is isogenous
  to the product of two elliptic curves.
  
\item In this case, the partition obtained from $\sigma_T$ is not
  compatible with the permutation $\sigma_S$, hence the group is not
  contained in a group of index $9$ (recall that this group is a
  congruence one).
  
\item The group is not contained in an index $9$ subgroup because once
  again the partition is not compatible with the action of $\sigma_S$.
  
\item The group is contained in a group of index $9$ with the same
  passport as the first two cases.
  
\item In this case, the group is contained in a subgroup of index $9$,
  with passport $\sigma_S=(2,8)(3,6)$,
  $\sigma_R=(1,8,4)(2,6,7)(3,9,5)$. Such a group corresponds to a
  curve of genus zero.
  
\item The group is contained in a group of index $9$ with the same
  passport as the first two cases.
  
\item The group is not contained in an index $9$ subgroup because the partition is not compatible with the action of $\sigma_S$.
\item The group is not contained in an index $9$ subgroup because the partition is not compatible with the action of $\sigma_S$.
\end{enumerate}

Although the unique subgroup of index $2$ has genus zero, a similar
computation shows that the groups $1, 3, 6$ are contained in the
subgroup of index $2$, while the other ones are not.

Verifying which groups are contained in subgroups of index $6$, we see
that the only ones are the first one (contained in the subgroup with
passport $\sigma_S=(1,5)(2,3)(4,6)$ and $\sigma_R=(1,3,4)$) and the sixth
one, contained in the subgroup with passport $\sigma_S=(1,4)(2,5)(3,6)$
and $\sigma_R=(1,5,3)(2,6,4)$, corresponding to the last subgroup of
index $6$ in the GitHub repository (see also
Table~\ref{table:subgroups}). Such a group corresponds to a curve of
genus one, hence the surface attached to it is isogenous to the
product of two elliptic curves as well. We deduce that the only
possible absolutely simple surface from the list of index $18$
subgroups are the ones corresponding to the groups $6$, $8$ and $9$
which match the ones found by Atkin and Swinnerton-Dyer (defined over
a cubic field), while all other ones are isogenous to the product of
two elliptic curves.

\begin{table}[]
\scalebox{0.6}{
  \begin{tabular}[t]{c|c|c|c|c|c|c|c|c|c}
Index & Kulkarni Graph & g.F.S. & Passport & Genus & $e_2$ & $e_3$ & Cusps \& Width & Generators & Congruence\\
  \hline
$2$ &   \includegraphics[width=50pt,height=10pt]{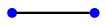} & $\{\infty, 0,\infty\}$ & $ (1,2), (), (1,2)$ & $0$ & $0$ & $2$ & $(\{\infty,0\}, 2)$ & 
 $\left(\begin{smallmatrix} -1 &  -1\\ 1& 0\end{smallmatrix}\right), \left(\begin{smallmatrix}0 &  -1 \\ 1 &  -1\end{smallmatrix}\right)$ & Yes\\
  \hline
 $3$ &   \raisebox{-.5\height}{\includegraphics[width=50pt,height=30pt]{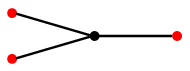}} & $\{\infty, 0,1,\infty\}$ & $ (), (1,3,2), (1,3,2)$ & $0$ & $3$ & $0$ & $(\{\infty,0,1\}, 3)$ & 
 $\left(\begin{smallmatrix} 0 &  -1\\ 1& 0\end{smallmatrix}\right), \left(\begin{smallmatrix}1 &  -1 \\ 2 &  -1\end{smallmatrix}\right), \left(\begin{smallmatrix}1 &  -2 \\ 1 &  -1\end{smallmatrix}\right)$ & Yes\\
    \hline
$3$ &    \raisebox{-.5\height}{\includegraphics[width=50pt,height=30pt]{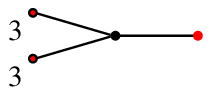}} & $\{\infty, 0,1,\infty\}$ & $ (2,3), (1,3,2), (1,2)$ & $0$ & $1$ & $0$ & $(\{\infty,0\}, 2), (\{1\},2)$ & 
$\left(\begin{smallmatrix} 0 &  -1\\ 1& 0\end{smallmatrix}\right), \left(\begin{smallmatrix}2 &  -1 \\ 1 & 0\end{smallmatrix}\right)$ & Yes\\
    \hline
$4$ &  \raisebox{-.5\height}{\includegraphics[width=50pt,height=30pt]{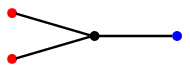}} & $\{\infty, 0,1,\infty\}$ & $ (2,3), (2,4,3), (1,2,4,3)$ & $0$ & $2$ & $1$ & $(\{\infty,0,1\}, 4)$ & 
 $\left(\begin{smallmatrix} -1 &  -1\\ 1& 0\end{smallmatrix}\right), \left(\begin{smallmatrix}1 &  -1 \\ 2 & -1\end{smallmatrix}\right), \left(\begin{smallmatrix}1 &  -2 \\ 1 & -1\end{smallmatrix}\right)$ & Yes\\
\hline
$4$ &  \raisebox{-.5\height}{\includegraphics[width=50pt,height=30pt]{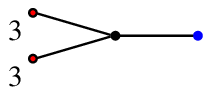}} & $\{\infty, 0,1,\infty\}$ & $ (1,2)(3,4), (2,4,3), (1,2,3)$ & $0$ & $0$ & $1$ & $(\{\infty,0\}, 3), (\{2\},3)$ & 
$\left(\begin{smallmatrix} -1 &  -1\\ 1& 0\end{smallmatrix}\right), \left(\begin{smallmatrix}2 &  -1 \\ 1 & 0\end{smallmatrix}\right)$ & Yes\\
    \hline
$5$ &  \raisebox{-.5\height}{\includegraphics[width=50pt,height=30pt]{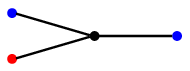}} & $\{\infty, 0,1,\infty\}$ & $ (1,2)(3,4), (2,5,4), (1,2,5,3,4)$ & $0$ & $1$ & $2$ & $(\{\infty,0,1\}, 5)$ & 
$\left(\begin{smallmatrix} -1 &  -1\\ 1& 0\end{smallmatrix}\right), \left(\begin{smallmatrix}1 &  -1 \\ 3 & -2\end{smallmatrix}\right), \left(\begin{smallmatrix}1 &  -2 \\ 1 & -1\end{smallmatrix}\right)$ & Yes\\    
\hline
$6$ &  \raisebox{-.5\height}{\includegraphics[width=50pt,height=30pt]{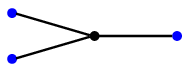}} & $\{\infty, 0,1,\infty\}$ & $ (1,2)(3,4)(56), (2,6,4), (1,2,5,6,3,4)$ & $0$ & $0$ & $3$ & $(\{\infty,0,1\}, 6)$ & 
$\left(\begin{smallmatrix} -1 &  -1\\ 1& 0\end{smallmatrix}\right), \left(\begin{smallmatrix}1 &  -1 \\ 3 & -2\end{smallmatrix}\right), \left(\begin{smallmatrix}1 &  -3 \\ 1 & -2\end{smallmatrix}\right)$ & Yes\\    
    \hline
 $6$ &  \raisebox{-.5\height}{\includegraphics[width=50pt,height=30pt]{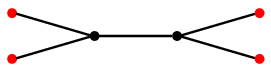}} & $\{\infty, 0,1,2,\infty\}$ & $ (1,5), (1,5,3)(2,6,4), (1,2,6,4,5,3)$ & $0$ & $4$ & $0$ & $(\{\infty,0,1,2\}, 6)$ & 
 \begin{tabular}{@{}c@{}}$\left(\begin{smallmatrix} 0 &  -1\\ 1& 0\end{smallmatrix}\right), \left(\begin{smallmatrix}1 &  -1 \\ 2 & -1\end{smallmatrix}\right),$\\ $\left(\begin{smallmatrix}3 &  -5 \\ 2 & -3\end{smallmatrix}\right), \left(\begin{smallmatrix}2 &  -5 \\ 1 & -2\end{smallmatrix}\right)$\end{tabular} & Yes\\
    \hline
     $6$ &  \raisebox{-.5\height}{\includegraphics[width=50pt,height=30pt]{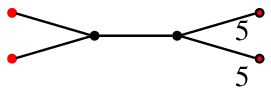}} & $\{\infty, 0,1,2,\infty\}$ & $ (2,5)(4,6), (1,5,3)(2,6,4), (1,2,4,5,3)$ & $0$ & $2$ & $0$ & $(\{\infty,0,1,\}, 5),(\{2\},5)$ & 
\begin{tabular}{@{}c@{}}$\left(\begin{smallmatrix} 0 &  -1\\ 1& 0\end{smallmatrix}\right), \left(\begin{smallmatrix}1 &  -1 \\ 2 & -1\end{smallmatrix}\right),$\\ $\left(\begin{smallmatrix}3 &  -4 \\ 1 & -1\end{smallmatrix}\right)$\end{tabular} & Yes\\    
    \hline
     $6$ &  \raisebox{-.5\height}{\includegraphics[width=50pt,height=30pt]{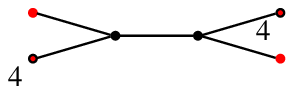}} & $\{\infty, 0,1,2,\infty\}$ & $ (2,5)(3,6), (1,5,3)(2,6,4), (1,2,3)(4,5,6)$ & $0$ & $2$ & $0$ & $(\{\infty,0,\}, 3),(\{1,2\},3)$ & 
\begin{tabular}{@{}c@{}}$\left(\begin{smallmatrix} 0 &  -1\\ 1& 0\end{smallmatrix}\right), \left(\begin{smallmatrix}3 &  -1 \\ 1 & 0\end{smallmatrix}\right),$\\ $\left(\begin{smallmatrix}3 &  -5 \\ 2 & -3\end{smallmatrix}\right), \left(\begin{smallmatrix}2 &  -5 \\ 1 & -2\end{smallmatrix}\right)$\end{tabular} & Yes\\    
    \hline
     $6$ &  \raisebox{-.5\height}{\includegraphics[width=50pt,height=30pt]{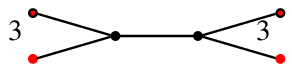}} & $\{\infty, 0,1,2,\infty\}$ & $ (1,6)(2,5), (1,5,3)(2,6,4), (1,2)(3,6,4,5)$ & $0$ & $2$ & $0$ & $(\{\infty\}, 2)(\{0,1,2\},4)$ & 
\begin{tabular}{@{}c@{}}$\left(\begin{smallmatrix} 1 &  2\\ 0& 1\end{smallmatrix}\right), \left(\begin{smallmatrix}1 &  -1 \\ 2 & -1\end{smallmatrix}\right),$\\ $\left(\begin{smallmatrix}3 &  -5 \\ 2 & -3\end{smallmatrix}\right), \left(\begin{smallmatrix}2 &  -5 \\ 1 & -2\end{smallmatrix}\right)$\end{tabular} & Yes\\    
    \hline
     $6$ &  \raisebox{-.5\height}{\includegraphics[width=50pt,height=30pt]{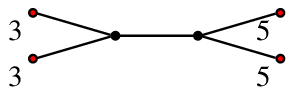}} & $\{\infty, 0,1,2,\infty\}$ & $ (1,3)(2,5)(4,6), (1,5,3)(2,6,4), (1,2,4,5)$ & $0$ & $0$ & $0$ &  \begin{tabular}{@{}c@{}}$(\{\infty,1\}, 4)(\{0\},1)$\\$(\{2\},4)$\end{tabular} & 
$\left(\begin{smallmatrix} 1 &  0\\ 1& 1\end{smallmatrix}\right), \left(\begin{smallmatrix}3 &  -4 \\ 1 & -1\end{smallmatrix}\right)$ & Yes\\    
    \hline
     $6$ &  \raisebox{-.5\height}{\includegraphics[width=50pt,height=30pt]{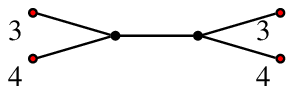}} & $\{\infty, 0,1,2,\infty\}$ & $ (1,6)(2,5)(3,4), (1,5,3)(2,6,4), (1,2)(3,6)(4,5)$ & $0$ & $0$ & $0$ & \begin{tabular}{@{}c@{}}$(\{\infty\}, 2)(\{0,2\},2)$\\$(\{1\},2)$\end{tabular} & 
$\left(\begin{smallmatrix} 1 &  2\\ 0& 1\end{smallmatrix}\right), \left(\begin{smallmatrix}3 &  -2 \\ 2 & -1\end{smallmatrix}\right)$ & Yes\\    
\hline
   $6$ &  \raisebox{-.5\height}{\includegraphics[width=50pt,height=30pt]{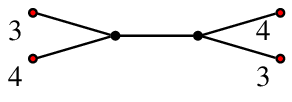}} & $\{\infty, 0,1,2,\infty\}$ & $ (1,4)(2,5)(3,6), (1,5,3)(2,6,4), (1,2,3,4,5,6)$ & $1$ & $0$ & $0$ & $(\{\infty,0,1,2\}, 6)$& 
$\left(\begin{smallmatrix} 2 &  1\\ 1& 1\end{smallmatrix}\right), \left(\begin{smallmatrix}3 &  -1 \\ 1 & 0\end{smallmatrix}\right)$ & Yes\\    
    \hline
     $7$ &  \raisebox{-.5\height}{\includegraphics[width=50pt,height=30pt]{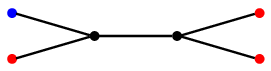}} & $\{\infty, 0,1,2,\infty\}$ & $ (1,2)(3,6), (2,6,4)(3,7,5), (1,2,3,7,5,6,4)$ & $0$ & $3$ & $1$ & $(\{\infty,0,1,2\}, 7)$ & 
\begin{tabular}{@{}c@{}}$\left(\begin{smallmatrix} -1 &  -1\\ 1& 0\end{smallmatrix}\right), \left(\begin{smallmatrix}1 &  -1 \\ 2 & -1\end{smallmatrix}\right),$\\ $\left(\begin{smallmatrix}3 &  -5 \\ 2 & -3\end{smallmatrix}\right), \left(\begin{smallmatrix}2 &  -5 \\ 1 & -2\end{smallmatrix}\right)$\end{tabular} & Yes\\    
    \hline
 $7$ &  \raisebox{-.5\height}{\includegraphics[width=50pt,height=30pt]{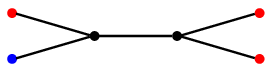}} & $\{\infty, 0,1,2,\infty\}$ & $ (2,6)(3,4), (1,6,4)(2,7,5), (1,2,7,5,6,3,4)$ & $0$ & $3$ & $1$ & $(\{\infty,0,1,2\}, 7)$ & 
\begin{tabular}{@{}c@{}}$\left(\begin{smallmatrix} 0 &  -1\\ 1& 0\end{smallmatrix}\right), \left(\begin{smallmatrix}1 &  -1 \\ 3 & -2\end{smallmatrix}\right),$\\ $\left(\begin{smallmatrix}3 &  -5 \\ 2 & -3\end{smallmatrix}\right), \left(\begin{smallmatrix}2 &  -5 \\ 1 & -2\end{smallmatrix}\right)$\end{tabular} & Yes\\    
    \hline
 $7$ &  \raisebox{-.5\height}{\includegraphics[width=50pt,height=30pt]{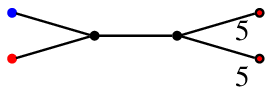}} & $\{\infty, 0,1,2,\infty\}$ & $ (1,2)(3,6)(5,7), (2,6,4)(3,7,5), (1,2,3,5,6,4)$ & $0$ & $1$ & $1$ & $(\{\infty,0,1,\}, 6)(\{2\},6)$ & 
\begin{tabular}{@{}c@{}}$\left(\begin{smallmatrix} -1 &  -1\\ 1& 0\end{smallmatrix}\right), \left(\begin{smallmatrix}1 &  -1 \\ 2 & -1\end{smallmatrix}\right),$\\ $\left(\begin{smallmatrix}3 &  -4 \\ 1 & -1\end{smallmatrix}\right)$\end{tabular} & No\\    
    \hline
 $7$ &  \raisebox{-.5\height}{\includegraphics[width=50pt,height=30pt]{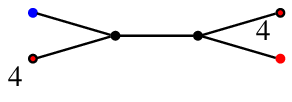}} & $\{\infty, 0,1,2,\infty\}$ & $ (1,2)(3,6)(4,7), (2,6,4)(3,7,5), (1,2,3,4)(5,6,7)$ & $0$ & $1$ & $1$ & $(\{\infty,0,\}, 4)(\{1,2\},4)$ & 
\begin{tabular}{@{}c@{}}$\left(\begin{smallmatrix} -1 &  -1\\ 1& 0\end{smallmatrix}\right), \left(\begin{smallmatrix}3 &  -1 \\ 1 & 0\end{smallmatrix}\right),$\\ $\left(\begin{smallmatrix}3 &  -5 \\ 2 & -3\end{smallmatrix}\right)$\end{tabular} & No\\    
    \hline
 $7$ &  \raisebox{-.5\height}{\includegraphics[width=50pt,height=30pt]{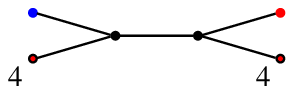}} & $\{\infty, 0,1,2,\infty\}$ & $ (1,2)(3,6)(4,5), (2,6,4)(3,7,5), (1,2,3,7,4)(5,6)$ & $0$ & $1$ & $1$ & $(\{\infty,0,2\}, 5)(\{1\},5)$ & 
\begin{tabular}{@{}c@{}}$\left(\begin{smallmatrix} -1 &  -1\\ 1& 0\end{smallmatrix}\right), \left(\begin{smallmatrix}3 &  -2 \\ 2 & -1\end{smallmatrix}\right),$\\ $\left(\begin{smallmatrix}2 &  -5 \\ 1 & -2\end{smallmatrix}\right)$\end{tabular} & No\\    
    \hline
 $7$ &  \raisebox{-.5\height}{\includegraphics[width=50pt,height=30pt]{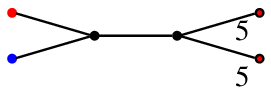}} & $\{\infty, 0,1,2,\infty\}$ & $ (2,6)(3,4)(5,7), (1,6,4)(2,7,5), (1,2,5,6,3,4)$ & $0$ & $1$ & $1$ & $(\{\infty,0,1,\}, 6)(\{2\},6)$ & 
\begin{tabular}{@{}c@{}}$\left(\begin{smallmatrix} 0 &  -1\\ 1& 0\end{smallmatrix}\right), \left(\begin{smallmatrix}1 &  -1 \\ 3 & -2\end{smallmatrix}\right),$\\ $\left(\begin{smallmatrix}3 &  -4 \\ 1 & -1\end{smallmatrix}\right)$\end{tabular} & No\\    
  \end{tabular}
  }
\caption{\label{table:subgroups}Table of Subgroups up to index $7$ with all the computed information.}
\end{table}

\bibliographystyle{abbrv}
\bibliography{bibliografia.bib} 

\end{document}